\newtheorem{theorem}{Theorem}[section]
\newtheorem{corollary}[theorem]{Corollary}
\newtheorem{lemma}[theorem]{Lemma}
\newtheorem{prop}[theorem]{Proposition}
\newtheorem*{theorem*}{Theorem}
\theoremstyle{definition}
\newtheorem{ex}[theorem]{Example}
\newtheorem{defn}[theorem]{Definition}
\newtheorem{remark}[theorem]{Remark}
\title{Thin Posets, CW Posets, and Categorification}
\author{Alex Chandler}
\affil{Faculty of Mathematics, University of Vienna\\
alex.chandler@univie.ac.at}
\newcommand{\mc}[1]{\mathcal{#1}}
\newcommand{\im}{\textnormal{im}\,}
\newcommand{\simp}{\textnormal{simp}}
\newcommand{\gr}{\textnormal{gr}}
\newcommand{\Int}{\textnormal{Int}}
\newcommand{\cell}{\textnormal{cell}}
\newcommand{\RGF}{\textnormal{RA}} 
\newcommand{\Ob}{\textnormal{Ob}}
\newcommand{\op}{\textnormal{op}}
\newcommand{\Br}{\textnormal{Br}} 
\newcommand{\Hom}{\textnormal{Hom}} 
\newcommand{\rk}{\textnormal{rk}}
\newcommand{\qrk}{q\textnormal{rk}}
\newcommand{\N}{\mathbb{N}} 
\newcommand{\R}{\mathbb{R}} 
\newcommand{\Z}{\mathbb{Z}}
\newcommand{\pinch}{\hspace{.5mm}\usebox\opinch\hspace{.5mm}}
\newcommand{\tvcplx}{\mathcal{C}^*(T_{2,n},\vec{x})}
\newcommand{\inv}{\textnormal{inv}}
\newcommand{\norm}[1]{\left\lVert #1 \right\rVert}
\newcommand{\Rmod}{R\textnormal{-}\mathbf{mod}} 
\newcommand{\PoCo}{\mathbf{PoCo}}
\newcommand{\PoHo}{\mathbf{PoHo}}
\newcommand{\Zmod}{\Z\textbf{-mod}}
\newcommand{\Rgmod}{R\textbf{-gmod}}
\newcommand{\Cabel}{\mc{C}(\mathbf{Abel})}
\newcommand{\Abel}{\mathbf{Abel}}
\newcommand{\grAbel}{\textbf{gr}\mathbf{Abel}}
\definecolor{forest}{rgb}{0.03, 0.47, 0.19}
\newsavebox\etatrans
\newsavebox\linearextension
\newsavebox\bcexample
\begin{document}
\maketitle

\begin{abstract}
Motivated by Khovanov's categorification of the Jones polynomial, we study functors $F$ from thin posets $P$ to abelian categories $\mc{A}$. Such functors $F$ produce cohomology theories $H^*(P,\mc{A},F)$. We find that CW posets satisfy conditions making them particularly suitable for the construction of such cohomology theories. We consider a category of tuples $(P,\mc{A},F,c)$, where $c$ is a certain $\{1,-1\}$-coloring of the cover relations in $P$, and show the cohomology arising from a tuple $(P,\mc{A},F,c)$ is functorial, and independent of the coloring $c$ up to natural isomorphism. 
Such a construction provides a framework for the categorification of a variety of familiar topological/combinatorial invariants: anything expressible as a rank-alternating sum over a thin poset. %This includes, for example, determinants, and any invariants of Eulerian posets.
\end{abstract}

\tableofcontents

\section{Introduction}
\label{chap-two}

In 2000, Khovanov \cite{khovanov1999categorification} categorified the Jones polynomial by lifting Kauffman's state sum formula (a rank-alternating sum over a Boolean lattice) to a homology theory built from a functor on the Boolean lattice. This construction was further explained and popularized soon after by Bar-Natan \cite{bar2002khovanov} and Viro \cite{viro2004khovanov}. Since Khovanov's categorification, many authors have used similar techniques to categorify other polynomials which admit state sum formulas over Boolean lattices (see for example \cite{dancso2015odd,  helme2005categorification, hepworth2015categorifying,  sazdanovic2018categorification, stovsic2008categorification}). In this paper, we set out a general framework for extending this technique to categorify objects which admit state sum formulas over a larger class of posets, called thin posets, containing Boolean lattices as a special case. Thin posets are defined as ranked finite posets whose intervals of length two are isomorphic to the `diamond' poset \usebox\diamondddd \ with elements $\{a,b,c,d\}$ and cover relations $\{a\lessdot b, b\lessdot d, a\lessdot c, c\lessdot d\}$.

This paper will be organized as follows. In Section \ref{posetbackground}, we review some basic poset terminology, and give a wealth of examples of thin posets. In Section \ref{dtsec} we introduce the class of diamond transitive thin posets. Diamond transitive thin posets $P$ have the property that there are far fewer conditions needed to check that a functor on $P$ is well defined. In particular, for diamond transitive thin posets, one need only show that compositions of morphisms agree along any two coinitial coterminal directed paths of length 2 in the Hasse diagram of $P$, whereas in general one must check this condition for chains of all lengths. The most important example of thin posets appearing in this paper are the CW posets, that is, the face posets of regular CW complexes. The first main result of this paper is given in Theorem \ref{thmintervals} where we classify intervals of length $\leq 3$ in diamond transitive posets, in particular showing that these coincide with the corresponding intervals in CW posets.  In the construction of Khovanov homology, there is a certain `sprinkling of signs' on the edges of the Hasse diagram of the Boolean lattice needed in order to obtain a chain comlpex. In Section \ref{bcsec} we introduce \textit{balanced colorings}, the appropriate generalization of this sprinking of signs to thin posets. We define a regular CW complex $X(P)$ whose 1-skeleton is the Hasse diagram $H(P)$, and show in Theorem \ref{dthomology} that if $P$ is a thin poset with $\hat{0}$, $P$ is diamond transitive if and only if $X(P)$ is simply connected. In Proposition \ref{colorable}, we give a sufficient condition for balanced colorability in terms of the space $X(P)$. In Section \ref{CWposets}, we study obstructions to diamond transitivity. In particular, in Theorem \ref{pinchthm}, we show that a poset $P$ is diamond transitive if and only if $P$ does not contain any `pinch products' of thin posets. This result allows us to show in Theorem \ref{cwdt} that CW posets are diamond transitive. In Section \ref{functorsonposets}, we show how to construct a chain complex $C^*(P,\mc{A},F,c)$ from a functor $F:P\to\mc{A}$ on a thin poset $P$ equipped with a balanced coloring $c$. In Theorem \ref{balindept}, we show these complexes are independent of the balanced coloring $c$ up to isomorphism. In Section \ref{categoryofpoco}, we introduce a category with objects $(P,\mc{A},F,c)$ in such a way that passing to cohomology $H^*(P,\mc{A},F,c)$ is functorial, and in Lemma \ref{naturality}, we show that the isomorphisms in Theorem \ref{balindept} are natural. Finally, in Section \ref{seccateg}, we address the purpose of this paper, how to view cohomology theories $H^*(P,\mc{A},F)$ as categorifications of what we will call rank alternators $\RGF(P,R,f)=\sum_{x\in P}(-1)^{\rk(x)}f(x)$ of functions $f:P\to R$ where $R$ is a commutative ring. Theorem \ref{categorify} (stated below) provides a blueprint for how one can construct such categorifications.

\begin{theorem*}
Let $P$ be a balanced colorable thin poset, $R$ a ring, and $f:P\to R$. Suppose $\mc{A}$ is a monoidal abelian category equipped with an isomorphism  of rings $K_0(\mc{A})\cong R$. 
Suppose $F:P\to\mc{A}$ is a functor with the property that for each $x\in P$, $[F(x)]=f(x)$. Then $H(P,\mc{A},F)$ categorifies $\RGF(P,R,f)$ in the sense that $[H(P,\mc{A},F)]=\RGF(P,R,f)$ in $K_0(\mc{A})\cong R$.
\end{theorem*}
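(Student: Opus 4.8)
The plan is to reduce the claim to the standard fact that, in a Grothendieck group, the Euler characteristic of a bounded cochain complex can be computed equally well from its terms or from its cohomology. First I would recall the construction of Section~\ref{functorsonposets}: the complex $C^\bullet(P,\mc{A},F,c)$ has $k$-th term $C^k(P,\mc{A},F,c)=\bigoplus_{\rk(x)=k}F(x)$, with differential assembled from the morphisms $F(x\lessdot y)$ weighted by the values of the balanced coloring $c$. Since a thin poset is finite, this is a bounded complex, and $H(P,\mc{A},F)$ is its cohomology, well defined up to isomorphism and independent of $c$ by Theorem~\ref{balindept}. Here $[H(P,\mc{A},F)]$ denotes the image of this cohomology under decategorification, i.e. $\sum_{k\ge 0}(-1)^k[H^k(P,\mc{A},F)]$ in $K_0(\mc{A})$.

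The core step is the identity $\sum_k(-1)^k[H^k(C^\bullet)]=\sum_k(-1)^k[C^k]$ for any bounded complex $C^\bullet$ in $\mc{A}$. Setting $Z^k=\ker d^k$ and $B^k=\im d^{k-1}$, the short exact sequences $0\to Z^k\to C^k\to B^{k+1}\to 0$ and $0\to B^k\to Z^k\to H^k\to 0$ give $[C^k]=[Z^k]+[B^{k+1}]$ and $[Z^k]=[B^k]+[H^k]$ in $K_0(\mc{A})$; substituting the first into the alternating sum and telescoping (the boundary contributions vanish because $C^\bullet$ is bounded) yields the identity, and I would include this short computation for completeness. Combining it with additivity of $[\,\cdot\,]$ over finite direct sums, so that $[C^k]=\sum_{\rk(x)=k}[F(x)]$, gives
\[
[H(P,\mc{A},F)]=\sum_{k\ge 0}(-1)^k\sum_{\rk(x)=k}[F(x)]=\sum_{x\in P}(-1)^{\rk(x)}[F(x)].
\]
Finally, the hypothesis $[F(x)]=f(x)$, read through the ring isomorphism $K_0(\mc{A})\cong R$, turns the right-hand side into $\sum_{x\in P}(-1)^{\rk(x)}f(x)=\RGF(P,R,f)$, which is the assertion.

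I do not anticipate a genuine obstacle; the argument is bookkeeping built on the classical Euler-characteristic lemma. The only points needing care are that a thin poset is finite (so $C^\bullet$ is bounded and all sums are finite), that the homological grading of $C^\bullet$ is exactly the rank grading of $P$ in the normalization of Section~\ref{functorsonposets}, and that the class $[H(P,\mc{A},F)]$ is genuinely independent of the auxiliary coloring $c$, which is guaranteed by Theorem~\ref{balindept} and Lemma~\ref{naturality}. It is worth remarking that only the additive-group structure of $K_0(\mc{A})$ enters this particular statement: the monoidal hypothesis and the resulting ring structure are what make $\RGF$ compatible with operations on posets in the broader picture, but they play no role in the proof of this equality.
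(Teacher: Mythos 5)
Your argument is correct and matches the paper's route: the paper reduces Theorem~\ref{categorify} to Theorem~\ref{pococateg}, which applies the Euler-characteristic identity of Theorem~\ref{eulercharacteristic} (cited from Weibel) to the complex $C^*(P,\mc{A},F,c)$. The only difference is that you spell out the telescoping proof of $\sum_k(-1)^k[H^k(C)]=\sum_k(-1)^k[C^k]$ where the paper simply quotes the result, and you correctly remark in passing that only the additive group structure of $K_0(\mc{A})$ is actually used in this particular equality.
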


Many invariants arise in combinatorics and topology in the form $\RGF(P,R,f)$. One explanation for the ubiquity of such invariants is the following. Let $P$ be an Eulerian poset with $\hat{0}$ and $\hat{1}$. Consider a poset invariant $f$ (that is, a function $f:\{\textnormal{posets}\}\to R$ which depends only on the isomorphism type of the poset). One can think of such invariants equivalently as families of functions $f_P:P\to R$ indexed by posets $P$ by assigning to the element $x$ the corresponding invariant value $f([\hat{0},x])$, and we identify $f(P)$ with $f_P(\hat{1})$. Now consider the poset invariant $g_P$ defined by $g_P(y)=\sum_{x\leq y\in P}f_P(x)$. Then M\"{o}bius inversion guarantees 
\begin{equation}
    f(P)=\sum_{x\in P}(-1)^{\rk(\hat{1})-\rk(x)}g_P(x).
\end{equation}
For example, Euler characteristics of simplicial complexes, determinants, h-polynomials, Tutte polynomials, and Jones polynomials all arise in the form of rank alternators on thin posets. Notice these invariants which arise as rank alternators do not strongly depend on the partial order on $P$ ($\RGF(P,R,f)$ sees only the parity of the rank of each element). In this context, categorification via Theorem \ref{categorify} can be thought of as the process of upgrading these invariants in such a way that the value of the upgraded invariant carries more information about the partial order (in the categorified setting, we have a morphism $F(x)\to F(y)$ for each cover relation $x\lessdot y$). 

The author was surprised to find that this work can be viewed within a more general framework. There is a way to obtain a cohomology theory from a functor on any poset (not just thin posets), by viewing the poset as a topological space. Finite posets are equivalent to finite $T0$-topological spaces, where a poset $(P,\leq)$ corresponds to the topological space with underlying set $P$, whose open sets are those sets $U$ such that $x\in U$ and $y\geq x$ implies $y\in U$ (or in other words, open sets are upper order ideals). Presheaves on the $T0$-space corresponding to a given poset are equivalent to functors on the poset, the functor value at an element $x\in P$ corresponding to the stalk of the corresponding presheaf at $x$. Given a presheaf $F$ on a poset $P$, the global section functor $F\mapsto \lim_{\leftarrow P}F$ is left exact, and thus one can define a cohomology theory $H^*(P,F)=R^*\lim_{\leftarrow P}F$ where $R^i\lim_{\leftarrow P}F$ is the $i$th right derived functor of the global section functor. Thus for thin posets, we have two different ways of obtaining cohomology theories from functors. These theories coincide in some special cases including Boolean lattices. See, for example, the work of Turner and Everitt  \cite{turnereveritthomotopy} where they show that Khovanov homology can be obtained from the right derived functors of the global section functor for the presheaf on the Boolean lattice corresponding to the functor defined via Khovanov's cube construction. More generally, in \cite{turnereverittcellular}, the same authors give a class of posets called \textit{cellular posets}, including both CW posets and geometric lattices, for which these theories coincide.

%After writing this paper, the author was informed of the work of Turner and Everitt \cite{turnereveritthomotopy,turnereverittcolored,turnereverittcellular} where they consider presheaves on posets, thinking of a poset as a finite topological space whose open sets are the lower order ideals. Presheaves on posets are equivalent to functors on posets, the functor value at an element $x\in P$ corresponding to the stalk of the corresponding presheaf at $x$. Given a presheaf $F$ on a poset $P$, the global section functor $F\mapsto \lim_{\leftarrow P}F$ is left exact, and thus one can define a cohomology theory $H^*(P,F)=R^*\lim_{\leftarrow P}F$ where $R^i\lim_{\leftarrow P}F$ is the $i$th right derived functor of the global section functor. In this way, they construct a more general way to assign cohomology theories to functors on posets than the method considered in this paper. This paper can thus be viewed as a more elementary and explicit but less general version of their construction. Exactly how the results of this paper relate to their constructions will be the topic of a future paper.

\section{Thin Posets and CW Posets}
\label{posetbackground}

Here we review some needed terminology in the theory of posets.  Nearly all of the material in Section \ref{posetbackground} is standard and can be found in \cite{stanley1998enumerative}. For the more obscure material, citations will be provided along the way. All posets in this paper are assumed to be finite. 
%A \textit{linear extension} of a poset $(P,\leq)$ is a total order $(P,\leq^\prime)$ on $P$ with the property that $x\leq y$ implies $x\leq^\prime y$ for all $x,y\in P$. 
If $P$ has a unique minimal (respectively maximal) element, we denote this element by $\hat{0}$ (respectively $\hat{1}$). A \textit{lower order ideal} in $P$ is a subset $L$ of $P$ with the property that if $y\in L$ and $x\leq y$ then $x\in L$. A map $f:P\to Q$ between posets is \textit{order preserving} if $x\leq_P y$ implies $f(x)\leq_Q f(y)$ for all $x,y\in P$, and is called an \textit{order embedding} if the converse of this implication also holds. Notice that an order embedding is necessarily injective. An order embedding $f:P\to Q$ is an \textit{isomorphism} if $f$ is also surjective. A \textit{subposet} of $P$ is a poset $P^\prime$ together with an injective order preserving map $f:P^\prime\to P$. A subposet $P^\prime$ is \textit{induced} if $f:P^\prime\to P$ is an order embedding.

A \textit{cover relation} in a poset is a pair $(x,y)\in P\times P$ with $x\leq y$ such that there is no $z\in P$ with $x<z<y$, and in this case we write $x\lessdot y$. Let $C(P)$ denote the set of all cover relations in $P$. The \textit{Hasse diagram} of a finite poset is the directed graph $(P,C(P))$ with vertex set $P$ and a directed edge drawn from left to right from $x$ to $y$ if and only if $x\lessdot y$ (Hasse diagrams are normally drawn bottom up, but in this paper we draw them from left to right as this convention will be more useful visually for the construction of cohomology theories). We will call a poset \textit{connected} if its Hasse diagram is connected. 

A \textit{chain} in a poset $P$ is a totally ordered subposet $C\subseteq P$. We say that a chain $C$ is \textit{from $x$ to $y$} if $x$ (resp. $y$) is the smallest (resp. largest) element of $C$. A chain in $P$ is \textit{saturated} if every cover relation in $C$ is a cover relation in $P$. A chain is $\textit{maximal}$ if it is not properly contained in any other chain. Equivalently, a maximal chain is a saturated chain from a minimal element of $P$ to a maximal element of $P$. 
%An \textit{abstract simplicial complex} is a collection $\Delta$ of subsets of a fixed finite set $V$ (called the \textit{vertex set}) such that if $F\in\Delta$ and $E\subseteq F$, then $E\in \Delta$. Each element of $\Delta$ is called a \textit{face} of $\Delta$. The \textit{geometric realization} of $\Delta$ is gotten by embedding the vertex set as an affinely independent set in $\R^n$ for large enough $n$, and taking the union of the convex hulls of each $F\in\Delta$ (this is well defined up to homeomorphism). 
Given a poset $P$, the \textit{order complex}, $\Delta(P)$, is the abstract simplicial complex whose faces are the chains in $P$. That is, $\Delta(P)=\{F\subseteq P \ | \ F\ \textnormal{is totally ordered}\}$. One should note that we always have $\varnothing\in\Delta(P)$. 
%Anytime a topological statement is made about a poset $P$, it should be assumed that we are referring to the order complex of $P$. 

A poset $P$ is \textit{graded} if there is a function $\rk:P\to\N$ with the property that $x\lessdot y$ implies $\rk(y)=\rk(x)+1$. Given a graded poset $P$, the \textit{rank} of $P$, denoted $\rk(P)$, is the maximum length of any chain in $P$, where the length of a chain $C$ is defined as $|C|-1$. If $P$ has $\hat{0}$, we will always assume that $\rk(\hat{0})=0$, and so in this case, we will have $\rk(P)=\max\{\rk(x) \ | \ x\in P\}$. If $P$ is a graded poset, and $x\leq y$ in $P$, the \textit{length} of the closed interval $[x,y]=\{z\in P \ | \ x\leq z\leq y\}$ is $\rk(y)-\rk(x)$. Equivalently, the length of $[x,y]$ is the length of any saturated chain from $x$ to $y$. 

\begin{defn}[{\cite[Section 4]{bjorner1984posets}}]
A  graded poset $P$ is \textit{thin} if every closed nonempty interval of length 2 has exactly 4 elements. In a thin poset, a closed nonempty interval of length 2 is called a \textit{diamond}. 
\label{thindiamond}
\end{defn}

\begin{remark}
There is another class of posets referred to as thin posets in \cite{bonanzinga2011combinatorics}. The above definition of thin posets is taken from \cite{bjorner1984posets} and has nothing to do with the one given in \cite{bonanzinga2011combinatorics}. 
\end{remark}

We now give a class of examples which we will make heavy use of throughout this paper. In every example involving the concept of a `face poset' we will always assume the existence of the `empty face' which is contained in every other face, thus such posets will always have unique minimal elements $\hat{0}$. If $P$ is a poset with $\hat{0}$, we will often write $\bar{P}=P\setminus\{\hat{0}\}$.

\begin{ex}
Let $S$ be a set. The \textit{Boolean lattice}, $2^S$, is the collection of subsets of $S$ partially ordered by inclusion. The Boolean lattice is graded by cardinality, and is thin since any interval of length 2 is of the form $\{T,T\cup\{x\}, T\cup\{y\}, T\cup\{x,y\}\}$. See Figure \ref{thinposetexamples} for an example of the Hasse diagram of a Boolean lattice.
\label{Booleanlatex}
\end{ex}

\begin{ex}
Let $\Delta$ be an \textit{abstract simplicial complex}, that is, a collection of subsets of a finite set $V$ such that $\{x\}\in\Delta$ for each $x\in V$ and for each . The reader is referred to \cite[Section 2.1]{kozlov2007combinatorial} for background information on abstract simplicial complexes and face posets.  The \textit{face poset} of $\Delta$ is the poset $\mc{F}(\Delta)$, with underlying set $\Delta$, partially ordered by inclusion. Every interval in $\mc{F}(\Delta)$ is isomorphic to a Boolean lattice, and therefore face posets of abstract simplicial complexes are also thin posets. Since the face poset of a simplex is itself a Boolean lattice, Example \ref{Booleanlatex} is a special case of this example. Simplicial complexes give an effective way to study topological spaces using combinatorial methods due to the following fact: Given a topological space $X$, suppose there is a simpicial complex $\Delta$ such that $||\Delta||$ is homeomorphic to $X$. In this case, $\Delta$ is referred to as a \textit{triangulation} of $X$. Then the homeomorphism type of $X$ is determined by the poset $\mc{F}(\Delta)$ in the sense that $X\cong \Delta(\mc{F}(\Delta))$. In fact, the order complex $\Delta(\mc{F}(\Delta))$ is the barycentric subdivision of $\Delta$ (see for example \cite[Section 1]{wachs2006poset}).
\label{simpcomplexex}
\end{ex}

\begin{ex}
Let $\Gamma$ be a \textit{polyhedral complex}, that is, a collection $\Gamma$ of polyhedra in $\R^n$ for some $n$ such that (1) if $F\in\Gamma$ and $E$ is a face of $F$, then $E\in\Gamma$, and 
(2) the intersection of any two polyhedra in $\Gamma$ is a face of each.
See \cite[Section 2.2]{kozlov2007combinatorial} for background information on polyhedral complexes. The \textit{face poset} of $\Gamma$ is the poset $\mc{F}(\Gamma)$ with underlying set $\Gamma$, partially ordered by inclusion. Example \ref{simpcomplexex} is a special case of this example since a simplicial complex is simply a polyhedral complex, all of whose polyhedra are simplices. See Figure \ref{thinposetexamples} for an example of the Hasse diagram of a polytopal complex (in this case, just a single polygon). 
\label{polycplxex}
\end{ex}

\begin{ex}
\label{bruhat} 
\label{bruhatsn}
For any Coxeter group $W$, the Bruhat order $\Br(W)$ is a graded poset, with rank function given by the length of a reduced word. See \cite[Section 2]{bjorner2006combinatorics} for background information on the Bruhat order. Bj\"{o}rner noted in \cite[Lemma 2.7.3]{bjorner2006combinatorics} that $\Br(W)$ is a thin poset.
%, and in  \cite[Section 5]{bjorner1984posets} that in fact $\Br(W)$ is a CW poset. 
\end{ex}
\begin{ex}
Recall a \textit{ball} in a topological space $X$ is a subspace $B$ of $X$ such that $B$ is homeomorphic to the subspace $\{x\in\R^d \ : \ |x|\leq 1\}$ of $\R^d$ for some $d$. The \textit{interior} of a ball is the image of $\{x\in\R^d \ : \ |x|< 1\}$ under such a homeomorphism.
Let $X$ be a Haussdorff space with a \textit{regular CW decomposition}, $\Gamma$. That is, $\Gamma$ is a collection of closed balls in $X$ such that the interiors of balls in $\Gamma$ partition $X$, and for each ball $\sigma\in\Gamma$ of positive dimension, the boundary $\partial\sigma$ is a union of balls in $\Gamma$. In this case, $\Gamma$ is called a \textit{regular CW complex}, and one will often write $\norm{\Gamma}=\cup_{\sigma\in\Gamma}\sigma$ so in particular $\norm{\Gamma}=X$. The space $\norm{\Gamma}$ is called the \textit{geometric realization} of $\Gamma$. 
There is a more general way to decompose a topological space, using so-called CW complexes \cite[Chapter 0]{Hatcher2002topology}, which are defined by specifying attaching maps to glue balls of different dimensions together. One may alternatively define regular CW complexes as those CW complexes whose attaching maps are all homeomorphisms. 

Given a regular CW complex $\Gamma$, its \textit{face poset}, $\mc{F}(\Gamma)$, is the poset with underlying set $\Gamma$, partially ordered by containment.  See \cite[Appendix A2.5]{bjorner2006combinatorics} or \cite{bjorner1984posets} for background information on regular CW complexes and their face posets. 
It was noted by Bj\"{o}rner that face posets of regular CW complexes are thin \cite[Proposition 3.1 and Figure 1(a)]{bjorner1984posets}. This is not true in general for face posets of CW complexes (for example, consider the CW decomposition of the $2$-disk with one 0-cell, one 1-cell, and one $2$-cell). Following the terminology in \cite{bjorner1984posets}, we will refer to face posets of regular CW complexes as \textit{CW posets}. See Definition \ref{cwposetdefn} for a poset theoretic definition of CW posets and Theorem \ref{cwresults} to see that the two definitions are equivalent. As shown by Bj\"{o}rner in  \cite[Section 2]{bjorner1984posets}, Examples \ref{Booleanlatex}, \ref{simpcomplexex}, \ref{polycplxex} and \ref{bruhatsn} are all CW posets. From the perspective of a topological combinatorialist, regular CW complexes $\Gamma$ are important because the combinatorics of $\mc{F}(\Gamma)$ determines the topology of $\norm{\Gamma}$ in the sense that $\norm{\Gamma}\cong \Delta(\mc{F}(\Gamma))$ \cite[Fact A.2.5.2]{bjorner2006combinatorics}.  This is not true in general for face posets of CW complexes. Thus regular CW complexes can be thought of as those CW complexes which can be understood combinatorially. 
\end{ex}

\begin{defn}[{\cite[Definition 2.1]{bjorner1984posets}}]
A \textit{CW poset} is a poset $P$ with $\hat{0}$, and at least one other element, for which any open interval of the form $(\hat{0},x)$ is homeomorphic to a sphere (that is, the order complex $\Delta(\hat{0},x)$ is homeomorphic to a sphere).
\label{cwposetdefn}
\end{defn}

The following properties of CW posets are proven in \cite{bjorner1984posets} Proposition 2.6, Figure 1(a), and Sections 2.3-2.5.

\begin{theorem}[\cite{bjorner1984posets}]
\label{cwresults}
Let $P$ be a finite poset.
    \begin{enumerate}
        \item $P$ is a CW poset if and only if $P$ is a face poset of a regular CW complex.
        \item If $P$ is a CW poset then $P$ is thin.
        \item If $P$ is a CW poset and $L$ is a lower order ideal in $P$, then $L$ is a CW poset.
        \item If $P$ is a CW poset and $x\in P$, then $\{y\in P \ | \ y\geq x\}$ is a CW poset.
        \item Face posets of simplicial complexes, Bruhat orders on Coxeter groups, and face posets of polytopal complexes are all CW posets.
    \end{enumerate}
\end{theorem}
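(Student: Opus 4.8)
The plan is to prove the five statements in the order (1), (3), (4), (2), (5); part (1) — the equivalence of the combinatorial Definition \ref{cwposetdefn} with the geometric notion of a face poset of a regular CW complex — is the engine, and the remaining parts become comparatively soft once it is available. (All of this is due to Bj\"{o}rner \cite{bjorner1984posets}; I only sketch the strategy.) The forward half of (1) is direct: if $P=\mc{F}(\Gamma)$ for a regular CW complex $\Gamma$, then for a $d$-cell $\sigma$ the open interval $(\hat 0,\sigma)$ in $P$ is the poset of proper nonempty faces of $\sigma$, so $\Delta(\hat 0,\sigma)$ is the barycentric subdivision of the boundary complex $\partial\bar\sigma\cong S^{d-1}$; hence $\Delta(\hat 0,\sigma)$ is a sphere and $P$ is a CW poset.

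For the reverse half of (1), given a CW poset $P$ I would build a regular CW complex on the space $\norm{\Delta(\bar P)}$ whose closed cells are $c_x:=\norm{\Delta((\hat 0,x])}$ for $x\in\bar P$. Since $(\hat 0,x]$ is a cone with apex $x$ over $(\hat 0,x)$ and $\Delta(\hat 0,x)\cong S^{\rk(x)-2}$ by hypothesis, $c_x$ is a ball of dimension $\rk(x)-1$ with boundary $\partial c_x=\norm{\Delta(\hat 0,x)}=\bigcup_{\hat 0<y<x}c_y$; the open cells $\mathrm{int}(c_x)$ — those spanned by chains whose maximum is exactly $x$ — partition $\norm{\Delta(\bar P)}$; and $c_x\cap c_{x'}=\bigcup\{c_z:z\le x,\ z\le x'\}$ is a subcomplex. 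Thus $\Gamma=\{c_x:x\in\bar P\}$ is a regular CW complex, and its face poset is $\{\hat 0\}\cup\bar P\cong P$, the empty face corresponding to $\hat 0$.

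Parts (3), (4), (2) then rest on the observation that open intervals in a CW poset are computed locally. If $L\subseteq P$ is a lower order ideal and $x\in L$, then $(\hat 0,x)$ taken in $L$ coincides with $(\hat 0,x)$ taken in $P$, so its order complex is a sphere and $L$ is a CW poset — this is (3). For (4), in $Q=\{y\in P:y\ge x\}$ the element $x$ is the $\hat 0$ of $Q$, and $(x,y)_Q=(x,y)_P$ for $y>x$; so what is needed is that \emph{every} open interval $(x,y)$ in a CW poset has order complex homeomorphic to a sphere of dimension $\rk(y)-\rk(x)-2$. I would obtain this from (1): inside $\Gamma$, the closed cell $\bar\sigma_y$ is itself a regular CW complex with face poset $[\hat 0,y]$, and the cells strictly between $\sigma_x$ and $\sigma_y$ form the face poset of a regular CW sphere (the appropriate link in $\partial\bar\sigma_y$); this can also be seen by induction on $\rk(y)-\rk(x)$, the base case $\rk(y)-\rk(x)=2$ being the CW poset axiom itself. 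Granting this, $Q$ is a CW poset, proving (4). Then (2) is immediate: for $x<y$ with $\rk(y)-\rk(x)=2$, part (4) makes $\{z\ge x\}$ a CW poset in which $[x,y]=[\hat 0,y]$ has length $2$, so $\Delta(x,y)\cong S^{0}$ is two points and $[x,y]$ has exactly $4$ elements; hence $P$ is thin.

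For (5), face posets of (abstract) simplicial complexes and of polytopal complexes are face posets of regular CW complexes — the cells being simplices, respectively polytopes, equipped with their standard boundary sphere decompositions — so they are CW posets by (1); equivalently, every interval is a Boolean lattice, resp.\ the face lattice of a polytope, which one can check directly against Definition \ref{cwposetdefn}. The Bruhat order $\Br(W)$ on a Coxeter group $W$ is the genuinely nontrivial case: one must show that for $w\in W$ the order complex $\Delta((e,w))$ of the open interval from the identity to $w$ is homeomorphic to a sphere of dimension $\ell(w)-2$. The route is to invoke that every Bruhat interval $[u,v]$ is graded and thin and that $\Delta(u,v)$ is EL-shellable (Bj\"{o}rner--Wachs), so that $\Delta(e,w)$ is a shellable Eulerian pseudomanifold with the homology of a sphere, hence a sphere. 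The main obstacle throughout is the reverse direction of (1) — confirming that the candidate family $\{c_x\}$ really is a regular CW complex (each $c_x$ an honest ball, the closed cells meeting along subcomplexes) and, relatedly, pinning down the topology of arbitrary open intervals needed for (4); for (5) the real work is external, namely importing the shellability and Eulerian-ness of Bruhat intervals.
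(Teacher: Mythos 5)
The paper does not prove Theorem~\ref{cwresults} at all — it simply records the statement and cites \cite{bjorner1984posets} (Proposition~2.6, Figure~1(a), and Sections~2.3--2.5), so there is no ``paper's own proof'' to compare against. Your sketch is a faithful reconstruction of Bj\"{o}rner's line of argument in that reference: barycentric subdivision for the forward half of~(1), the cone construction $c_x=\norm{\Delta((\hat 0,x])}$ for the reverse half, locality of open intervals for~(3) and~(4), thinness from the $\rk(y)-\rk(x)=2$ case of~(4), and Bj\"{o}rner--Wachs shellability for the Bruhat case of~(5) — so it is essentially the approach the paper is pointing to.
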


\begin{ex}
An \textit{Eulerian poset} is a poset in which every nontrivial interval has the same number of elements in even rank as in odd rank. This condition immediately implies that intervals of length two are diamonds, so all Eulerian posets are thin. In fact, Stanley showed in \cite[Section 1]{stanley1994survey}, that CW posets are Eulerian, so this example includes all of the previous examples as special cases.
\end{ex}

\begin{figure}[h]
 	\begin{center}
        \begin{tikzpicture}[scale=1.2, xscale=1.2]
        %\draw (-2,-1) rectangle (9,1.3);
            \useasboundingbox (-2,-1) rectangle (9,1.3);
            %\node at (-2.5,1){E.g.};
            \node at (-1.5,1){$2^{[3]}$};
            \node at (2.5,1){$\Br(S_3)$};
            \node[scale=1] at (6,1){$\mc{F}(\usebox\hexagonp)$};
            \node at (0,0){\usebox\booleanlat};
            \node[xscale=.8] at (3.75,0){\usebox\bruhatordersimple};
            \node at (7.5,0){\usebox\polytopefaces};
        \end{tikzpicture}
    \end{center}
    \caption{%In this figure we illustrate three examples of thin posets: face posets of simplicial complexes (and thus Boolean lattices), the Bruhat order on Coxeter groups, and face posets of polytopes. 
    In this figure we see the Boolean lattice $2^{[3]}$ consisting of all subsets of $[3]=\{1,2,3\}$ (or equivalently the face poset of the 2-simplex), the Bruhat order on the symmetric group $S_3$, and the face poset of a hexagon.}
    \label{thinposetexamples}
    \end{figure}
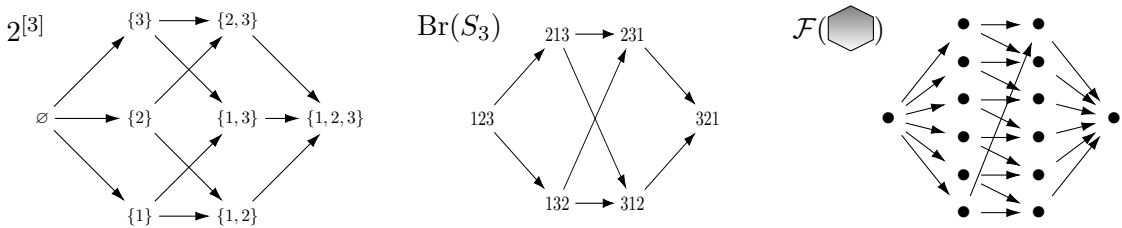

\section{The Diamond Group and Diamond Transitivity}
\label{dtsec}

Given a thin poset $P$, let $S$ denote the set of diamonds in $P$ (see Definition \ref{thindiamond}). Let $W(P)$ denote the group with presentation $\langle S \ | \ R\rangle$, where $R=\{d^2 \ | \ d\in S\}$.  
Given $x,y\in P$ with $x\leq y$, let $\mc{C}_{x,y}$ denote the set of all maximal chains in the interval $[x,y]$ and let $\mc{C}_P$ (or just $\mc{C}$ when $P$ is understood) denote the disjoint union of all of the sets $\mc{C}_{x,y}$ for each pair $(x,y)$ where $x\leq y$ in $P$. In other words, $\mc{C}=\mc{C}_P$ is the set of all saturated chains in $P$. 

\begin{defn}
Given a diamond $d=\{x,a,b,y\}\in S$ (where $x\lessdot a\lessdot y$ and $x\lessdot b\lessdot y$) and a saturated chain $C\in\mc{C}$, define $dC\in \mc{C}$ as follows:
\begin{itemize}
    \item If $x\lessdot a\lessdot y$ is a subchain of $C$, define $dC=(C\setminus\{a\})\cup\{b\}$,
    \item If $x\lessdot b\lessdot y$ is a subchain of $C$, define $dC=(C\setminus\{b\})\cup\{a\}$,
    \item Otherwise, define $dC=C$.
\end{itemize}
We will refer to the process of passing from $C$ to $dC$ as performing a \textit{diamond move} on $C$.
\end{defn}

\begin{remark}
    The above action of diamonds on maximal chains coincides with Stanley's $\tau_i$ operators in \cite[Section 6]{stanley2009promotion}, defined en route to generalizing the operations of promotion and evacuation to the context of graded posets. 
\end{remark}

 Intuitively, if $C$ contains one side of the diamond $d$, we form $dC$ by rerouting $C$ along the other side of the diamond $d$. 
 Notice that for any $d\in S$, and any $C\in\mc{C}$, $d(dC)=C$, and therefore this defines an action of $W(P)$ on $\mc{C}$ via $(d_1\dots d_{k-1}d_k)(C)=d_1(\dots d_{k-1}(d_k(C))\dots)$.
 Let $N(P)$ denote the subgroup of $W(P)$ generated by all words which act trivially on $\mc{C}$. The following two facts are special cases of a more general well known result, but we provide proofs here for convenience.

\begin{lemma}
For any thin poset $P$ with diamond set $S$, $N(P)$ is a normal subgroup of $W(P)$. 
\end{lemma}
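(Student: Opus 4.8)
The plan is to identify $N(P)$ with the kernel of the permutation representation afforded by the action of $W(P)$ on $\mc{C}$, and then invoke the standard fact that the kernel of a group homomorphism is normal. So the real content is just unwinding definitions.

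First I would make precise the homomorphism $\varphi\colon W(P)\to\mathrm{Sym}(\mc{C})$ underlying the action described above. The assignment sending each generator $d\in S$ to the bijection $C\mapsto dC$ of $\mc{C}$ extends, by the universal property of the free group on $S$, to a homomorphism from that free group to $\mathrm{Sym}(\mc{C})$; to see that it descends to $W(P)=\langle S\mid\{d^2:d\in S\}\rangle$ one only needs each relator to act trivially, which is exactly the identity $d(dC)=C$ for all $d\in S$, $C\in\mc{C}$ already recorded in the text. This well-definedness check is the only thing that needs verifying, and it is immediate; I do not expect any genuine obstacle here.

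Next I would observe that $\ker\varphi=\{w\in W(P):wC=C\text{ for all }C\in\mc{C}\}$ is precisely the set of words that act trivially on $\mc{C}$. Being the kernel of a homomorphism, this set is already a subgroup of $W(P)$, so the subgroup it generates is itself; that is, $N(P)=\ker\varphi$. Since the kernel of any group homomorphism is normal, $N(P)\trianglelefteq W(P)$, as claimed. (If one prefers to avoid naming $\varphi$, the same point can be made directly: for $w\in W(P)$, $n\in N(P)$, and any $C\in\mc{C}$ one has $(wnw^{-1})C=w\bigl(n(w^{-1}C)\bigr)=w(w^{-1}C)=C$ because $n$ fixes every saturated chain, so $wnw^{-1}$ again acts trivially and hence lies in $N(P)$.)
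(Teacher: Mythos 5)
Your proof is correct, and the parenthetical direct computation at the end is essentially verbatim the paper's argument (show $wnw^{-1}$ fixes every saturated chain, hence lies in $N(P)$). The main framing via $N(P)=\ker\varphi$ is the same fact packaged more conceptually, with the useful extra observation that the set of trivially-acting words is already a subgroup so the ``generated by'' in the definition of $N(P)$ is redundant.
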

\begin{proof}
Let $w\in N(P), y\in W(P)$, and $C\in\mc{C}$. Then $$y^{-1}wy(C) =y^{-1}(w(y(C)) =y^{-1}(y(C)) =(yy^{-1})(C) =eC =C.$$ Therefore $y^{-1}wy\in N$ and we conclude that $N(P)$ is normal.
\end{proof}
\begin{defn}
    Given a thin poset $P$, the \textit{diamond group}, $D(P)$ is defined as the quotient of $W(P)$ by the normal subgroup $N(P)$:
    $$D(P)=W(P)/N(P).$$
\end{defn}
Recall that a group action of a group $G$ on a set $X$ can be thought of as a homomorphism $\phi:G\to S_X$ from $G$ to the symmetric group on the set $X$, where the permutation $\phi(g)$ is defined via $\phi(g)(x)=gx$ for $g\in G, x\in X$. A group action is \textit{faithful} if the corresponding homomorphism $\phi_G:G\to S_X$ is injective, or in other words, if the only element of $G$ which fixes every element of $X$ is the identity element.

\begin{lemma}
Given a thin poset $P$, $D(P)$ acts faithfully on the set $\mc{C}$ of saturated chains. In other words, the homomorphism $\phi: D(P)\to S_{\mc{C}}$ is injective.
\label{faithful}
\end{lemma}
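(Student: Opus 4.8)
The plan is to recognize $N(P)$ as precisely the kernel of the permutation representation attached to the action of $W(P)$ on $\mc{C}$, and then package the first isomorphism theorem.

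First I would write $\phi_W\colon W(P)\to S_{\mc{C}}$ for the homomorphism recording the $W(P)$-action on saturated chains constructed just above; this is a genuine homomorphism because each generator $d\in S$ satisfies $d(dC)=C$ for all $C\in\mc{C}$, so the defining relations $d^2$ of $W(P)$ are respected. Set $K=\ker\phi_W=\{w\in W(P):wC=C\text{ for all }C\in\mc{C}\}$. By definition, $N(P)$ is the subgroup of $W(P)$ generated by all words acting trivially on $\mc{C}$, that is, the subgroup generated by the set $K$; since $K$ is already a subgroup (being a kernel), this forces $N(P)=K$. This identification is really the only step carrying content, and it rests on the elementary fact that the subgroup generated by the underlying set of a subgroup is that subgroup itself.

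Next, since $N(P)=\ker\phi_W$, the representation $\phi_W$ factors through the quotient, yielding a well-defined homomorphism $\phi\colon D(P)=W(P)/N(P)\to S_{\mc{C}}$ with $\phi(wN(P))=\phi_W(w)$; by construction this is exactly the homomorphism encoding the $D(P)$-action on $\mc{C}$. The first isomorphism theorem then gives $\ker\phi=\ker\phi_W/N(P)=N(P)/N(P)$, the trivial subgroup, so $\phi$ is injective. Equivalently, the only class in $D(P)$ fixing every saturated chain is the identity, which is the claimed faithfulness.

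I do not anticipate a real obstacle: the proof is a repackaging of the first isomorphism theorem, and the only point to state with care is the equality $N(P)=\ker\phi_W$. As a consistency check, the normality of $N(P)$ established in the preceding lemma is automatic from this viewpoint, since kernels are always normal.
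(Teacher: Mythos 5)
Your proof is correct. It follows the same conceptual route as the paper's, since both hinge on the observation that $N(P)$ is exactly the set of elements of $W(P)$ acting trivially on $\mc{C}$, but you package it more efficiently by identifying $N(P)$ with $\ker\phi_W$: the elements acting trivially already form a subgroup (a kernel), so the subgroup they generate is themselves, forcing $N(P)=\ker\phi_W$. The paper instead does the bookkeeping by hand, checking that $[w]C:=wC$ is independent of representative, that it defines a group action, and then that any class fixing every chain is trivial; your single invocation of the first isomorphism theorem delivers all of that at once. As you observe, this viewpoint also renders the preceding lemma (normality of $N(P)$) automatic, since kernels are always normal. Nothing is missing; yours is a clean reorganization of the same argument that spares a couple of hand verifications.
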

\begin{proof}
Given $w\in W(P)$ let us denote the class of $w$ in $D(P)$ by $[w]$. The action of $W(P)$ on $\mc{C}$ decends to an action of $D(P)$ on $\mc{C}$ via $[w]C=wC$. This is well defined since if $[w]=[v]$, then $w^{-1}v\in N(P)$ and thus $w^{-1}v$ acts trivially on all elements of $\mc{C}$. Therefore, for all $C\in\mc{C}$, $wC=w(w^{-1}v)(C)=vC$. This is a group action since $[v]([w]C)=v(wC)=(vw)(C)=([vw])C=([v][w])C$ for all $v,w\in W(P)$ and $C\in\mc{C}$.  Suppose $[w]\in D(P)$ satisfies $[w]C=C$ for all $C\in\mc{C}$. Then $wC=C$ for any $C\in\mc{C}$ and therefore $w\in N(P)$ and therefore $[w]$ is the identity element in $D(P)$. 
\end{proof}

Recall that if a group $G$ acts on a set $X$, and $Y\subseteq X$ is a subset which is closed under the action of $G$, then $G$ also acts on $Y$ by restriction. In our case, notice that for each $x\leq y$, $\mc{C}_{x,y}\subseteq \mc{C}$ is closed under the action of $D(P)$. An action of $G$ on $X$ is called \textit{transitive} if for any $x,y\in X$ there exists $g\in G$ such that $gx=y$. In other words, if we let $\mc{O}_x=\{gx \ | \ g\in G\}$ denote the \textit{orbit} of $x$ under the action of $G$, then $G$ acts transitively on $X$ if $X=\mc{O}_x$ for some (equivalently for all) $x\in X$.

\begin{defn} A thin poset $P$ is \textit{diamond transitive} if $D(P)$ acts transitively on $\mc{C}_{x,y}$ for each pair $x\leq y$ in $P$.
\end{defn}

\begin{ex}
Let $P=\Br(S_3)=\usebox\symthree$\ ,  $$S=\left\{\usebox\symthreeDone,\usebox\symthreeDtwo, \usebox\symthreeDthree, \usebox\symthreeDfour\right\}$$ and  $$\mc{C}_{\hat{0},\hat{1}}=\left\{\usebox\symthreeCone,\usebox\symthreeCtwo, \usebox\symthreeCthree, \usebox\symthreeCfour\right\}.$$ 
Then $D(P)$ acts on $\mc{C}_{\hat{0},\hat{1}}$ according to the table in Figure \ref{diamondmult}. 
%or in other words, $d_1=(13)$, $d_2=(24)$, $d_3=(12)$, $d_4=(34)$. Notice that the image of $S$ spans $S_4$ but is not mapped to a set of Coxeter generators (\ACcom{is it? Its at least not mapped to the standard generators}). However, the set $\{d_3, d_3d_1d_3, d_4\}$ is. 
\end{ex}

\begin{figure}
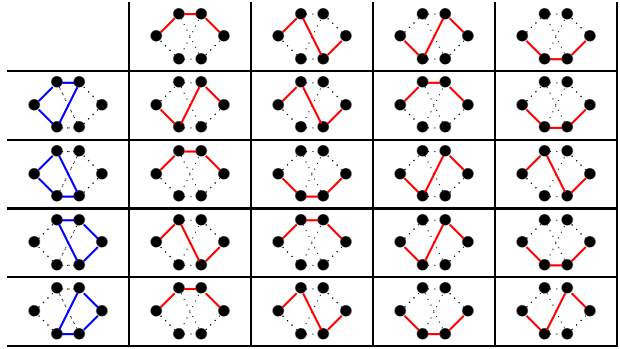

\centering
\begin{tabular}{l|l|l|l|l|}
 & \usebox\symthreeCone & \usebox\symthreeCtwo & \usebox\symthreeCthree & \usebox\symthreeCfour \\ \hline
\usebox\symthreeDone & \usebox\symthreeCthree & \usebox\symthreeCtwo & \usebox\symthreeCone & \usebox\symthreeCfour \\ \hline
\usebox\symthreeDtwo & \usebox\symthreeCone & \usebox\symthreeCfour & \usebox\symthreeCthree & \usebox\symthreeCtwo \\ \hline
\usebox\symthreeDthree & \usebox\symthreeCtwo & \usebox\symthreeCone & \usebox\symthreeCthree & \usebox\symthreeCfour \\ \hline
\usebox\symthreeDfour & \usebox\symthreeCone & \usebox\symthreeCtwo & \usebox\symthreeCfour & \usebox\symthreeCthree \\ \hline
\end{tabular}
\caption{The top row shows the set of maximal chains (in red), and the leftmost column shows the set of diamonds (in blue) for the Bruhat order on $S_3$. The maximal chain shown in row $d$ and column $C$ is the chain $dC$ defined by the action of $D(P)$ on $\mc{C}_{\hat{0},\hat{1}}$.}
\label{diamondmult}
\end{figure}

We now give a characterization of small intervals in diamond transitive thin posets. 
    
\begin{theorem}\label{thmintervals}
    Intervals of length 2 and 3 in a diamond transitive thin poset $P$ are of the combinatorial type prescribed in Figure \ref{intervals}.
\end{theorem}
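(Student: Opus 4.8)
The plan is to reduce both cases to a statement about the bipartite incidence graph between the two ``middle ranks'' of the interval, and then to recognize diamond transitivity as a connectivity condition on that graph. For length $2$ there is nothing to do: by Definition \ref{thindiamond} every closed nonempty interval of length $2$ in a thin poset is a diamond, so there is exactly one combinatorial type.

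For length $3$, fix an interval $I=[x,y]$ of length $3$ in a diamond transitive thin poset $P$. Let $A=\{a\in I : x\lessdot a\}$ and $B=\{b\in I : b\lessdot y\}$ be the elements of $I$ of rank $1$ and rank $2$, and let $G$ be the simple bipartite graph on $A\sqcup B$ with an edge $ab$ exactly when $a\lessdot b$. Since $I$ is graded of length $3$, every element of $I$ lies in $\{x\}\cup A\cup B\cup\{y\}$, and the only covering relations not involving $x$ or $y$ are the edges of $G$, so $G$ encodes $I$ completely. The first real step is to apply thinness locally: for each $b\in B$ the interval $[x,b]$ is a diamond, so $b$ covers exactly two elements of $A$; dually each $a\in A$ is covered by exactly two elements of $B$. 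Hence $G$ is $2$-regular, so it is a disjoint union of cycles, and since $G$ is bipartite these cycles are even, in particular of length $\geq 4$.

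The next step is to translate diamond moves on maximal chains of $I$ into moves on $G$. A maximal chain of $I$ has the form $x\lessdot a\lessdot b\lessdot y$ and corresponds bijectively to the edge $ab$ of $G$; because $I=[x,y]$ is convex, any diamond acting nontrivially on such a chain is contained in $I$, and its bottom and top elements must be $\{x,b\}$ or $\{a,y\}$, so it is exactly the diamond $[x,b]$ (which replaces $a$ by the other neighbor of $b$ in $G$) or the diamond $[a,y]$ (which replaces $b$ by the other neighbor of $a$ in $G$). Under the identification $\mc{C}_{x,y}\cong E(G)$, the $D(P)$-orbit of an edge is therefore precisely the edge set of its connected component of $G$. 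Since $P$ is diamond transitive, $D(P)$ acts transitively on $\mc{C}_{x,y}$, so $G$ must be connected, hence a single even cycle $C_{2n}$ with $n\geq 2$. Reading this back, $I$ consists of $\hat{0}$, then $n$ elements of rank $1$, then $n$ elements of rank $2$, then $\hat{1}$, with the two middle ranks joined in a $2n$-cycle; this is exactly the face poset of an $n$-gon, that is, the combinatorial type recorded in Figure \ref{intervals}. The identical list arises for CW posets: for an element $z$ of rank $3$ in a CW poset, $\Delta(\hat{0},z)\cong S^1$, and a regular CW structure on $S^1$ is a $2n$-cycle, giving the same interval.

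I expect the main obstacle to be the bookkeeping in the third step: verifying carefully that no diamond of $I$ other than those in the families $\{[x,b] : b\in B\}$ and $\{[a,y] : a\in A\}$ can move a length-$3$ chain, and that these act exactly as the ``pass to an adjacent edge of the cycle'' moves, so that $D(P)$-orbits really coincide with the components of $G$ rather than being coarser or finer. The remaining ingredients — the length-$2$ case, the existence of $\hat{0}$ and $\hat{1}$ in a closed interval, and the fact that a $2$-regular simple bipartite graph is a disjoint union of even cycles of length $\geq 4$ — are routine.
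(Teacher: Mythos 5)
Your proof is correct, and it is essentially the same underlying idea as the paper's but organized more cleanly. The paper's proof starts from one saturated chain $x\lessdot a_0\lessdot b_0\lessdot y$ and ``walks around the cycle,'' repeatedly invoking thinness to produce the next $a_{i+1},b_{i+1}$ until the cycle closes, and then argues that any extra rank-1 element would create a second cycle whose chains could never be reached by diamond moves. You instead front-load the structural observation: thinness alone makes the middle bipartite graph $G$ on $A\sqcup B$ a $2$-regular simple bipartite graph (so a disjoint union of even cycles of length $\geq 4$), and you then identify the $D(P)$-orbits on $\mc{C}_{x,y}$ with the connected components of $G$, so diamond transitivity is precisely the statement that $G$ is a single cycle. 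The identification of orbits with components is the key step, and your observation that the only diamonds that can act nontrivially on a chain of $[x,y]$ are $[x,b]$ and $[a,y]$ (by convexity of intervals and the rank constraint) is exactly what makes it work; the paper leaves this implicit. The payoff of your route is that the two hypotheses---thinness and diamond transitivity---do cleanly separated work (local degree versus global connectivity), which makes the argument easier to verify and makes clear that the converse direction is also true for any such interval. Both approaches are fine; yours reads more like a classification theorem, the paper's more like an algorithmic construction.
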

\begin{proof}
    The result for intervals of length 2 follows from thinness. Consider now an interval $[x,y]$ of length 3 in a diamond transitive thin poset. Take a chain of the form $x\lessdot a_0\lessdot b_0\lessdot y$ in $[x,y]$. The interval $[x,b_0]$ must be a diamond, so there must be $a_1\neq a_0\in P$  with $x\lessdot a_1\lessdot b_0$. The interval $[a_1,y]$ must be a diamond, so there must be an element $b_1\neq b_0\in P$ with $a_1\lessdot b_1\lessdot y$. The interval $[a_0,y]$ must be a diamond so either $a_0\lessdot b_1$ or we continue the process by adding elements $a_2,b_2$ with $a_2\lessdot b_1$ and $a_2\lessdot b_2$. Since $P$ is finite, one must eventually stop adding such elements $a_i,b_i$. Now, the interval $[x,b_0]$ must be a diamond, so eventually we must connect $a_0$ to $b_k$ for some $k$. 

    Once we reach $k$ such that $a_0\lessdot b_k$, we claim there can be no more elements in $[x,y]$ besides those already considered.
    Suppose there were some element $a_{k+1}$ in $[x,y]$ with $x\lessdot a_{k+1}$. If $a_{k+1}\lessdot b_i$ for some $i\leq k$ then thinness is contradicted so there must be some $b_{k+1}$ with $a_{k+1}\lessdot b_{k+1}\lessdot y$. If $a_i\lessdot b_{k+1}$ for any $i\leq k$ then again thinness is contradicted. However now, there is no way to get to the chain $x\lessdot a_{k+1}\lessdot b_{k+1}\lessdot y$ from the chain $x\lessdot a_k\lessdot b_k\lessdot y$ by diamond moves, thus contradicting diamond transitivity. Thus every interval of length 3 is of the combinatorial type shown in the statement above.
\end{proof}

    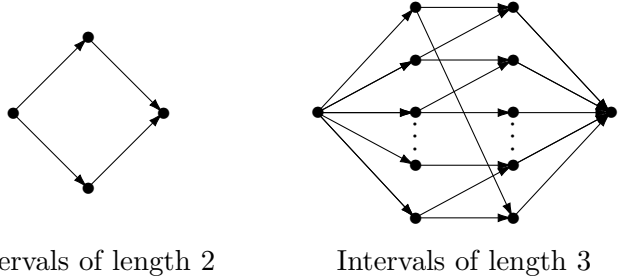
\begin{figure}
        \centering
            \begin{tikzpicture}
            \useasboundingbox (0,-2) rectangle (5,1.5);
            \node[rotate=-90] at (0,0){\usebox\diamonddd};
            \node at (0,-2){Intervals of length 2};
            \node[rotate=-90] at (5,0){\usebox\lengththree};
            \node at (5,-2){Intervals of length 3};
            \end{tikzpicture}
        \caption{The combinatorial type of intervals of length 2 and 3 in diamond transitive posets.}
        \label{intervals}
    \end{figure}

\begin{remark}\label{cwremark}
Every CW poset has intervals of length 2 and 3 isomorphic to those shown in Theorem \ref{thmintervals}, as noted by Bj\"{o}rner in \cite[Figure 1]{bjorner1984posets}. Thus one may wonder the relationship between diamond transitive thin posets and CW posets. Theorem \ref{cwdt} together with Example \ref{dtnotcw} show that the CW posets are a proper subset of the diamond transitive posets. 
\end{remark}

The following theorem, while less general than the forthcoming Theorem \ref{cwdt}, provides an opportunity to present a direct proof of diamond transitivity. 

\begin{theorem}
Face posets of simplicial complexes are diamond transitive.
\end{theorem}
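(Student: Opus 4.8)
The plan is to deduce diamond transitivity of $\mc{F}(\Delta)$ from the elementary fact that any ordering of a finite set can be transformed into any other by a finite sequence of swaps of consecutive entries. Fix faces $x\subseteq y$ of $\Delta$ and let $n=|y|-|x|$ be the length of the interval $[x,y]$. By Example \ref{simpcomplexex}, $[x,y]$ is isomorphic to the Boolean lattice on $y\setminus x$, so a maximal chain $C\colon x=F_0\lessdot F_1\lessdot\cdots\lessdot F_n=y$ is recorded faithfully by the ordering $(w_1,\dots,w_n)$ of the set $y\setminus x$ in which its elements get adjoined, where $\{w_k\}=F_k\setminus F_{k-1}$; this sets up a bijection between $\mc{C}_{x,y}$ and the set of orderings of $y\setminus x$.

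Next I would translate diamond moves into this language. Using the description of $[x,y]$ as a Boolean lattice, a diamond of $\mc{F}(\Delta)$ contained in $[x,y]$ has the form $d=\{T,\,T\cup\{a\},\,T\cup\{b\},\,T\cup\{a,b\}\}$ with $x\subseteq T$ and $T\cup\{a,b\}\subseteq y$; if $C$ runs through the side $T\lessdot T\cup\{a\}\lessdot T\cup\{a,b\}$, then $dC$ runs through the side $T\lessdot T\cup\{b\}\lessdot T\cup\{a,b\}$, so the ordering of $dC$ differs from that of $C$ only by interchanging the two consecutive entries $a$ and $b$. Conversely, for any $C\in\mc{C}_{x,y}$ and any index $k\in\{1,\dots,n-1\}$, the set $\{F_{k-1},\,F_k,\,F_{k-1}\cup\{w_{k+1}\},\,F_{k+1}\}$ is a diamond of $\mc{F}(\Delta)$ (its fourth element is a subset of $y$, hence a face), and performing this diamond move on $C$ swaps the entries $w_k$ and $w_{k+1}$ of its ordering. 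Thus the diamond moves available inside $[x,y]$ are exactly the consecutive-entry swaps of orderings.

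To finish: given $C,C'\in\mc{C}_{x,y}$, transform the ordering of $C$ into that of $C'$ by a sequence of consecutive-entry swaps; realize the $i$-th swap by the diamond $d_i\in S$ supplied by the previous paragraph, chosen relative to the chain obtained after the swaps already made. Each $d_i$ is a genuine diamond of $\mc{F}(\Delta)$, and listing these diamonds with the one for the first swap placed rightmost yields a word $w\in W(P)$ with $wC=C'$; hence $[w]\in D(P)$ carries $C$ to $C'$. Since $x\leq y$ was arbitrary, $D(P)$ acts transitively on every $\mc{C}_{x,y}$, i.e.\ $\mc{F}(\Delta)$ is diamond transitive.

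The only point requiring care is this last step: the diamond $d_i$ depends on the current chain rather than merely on the target ordering, so the ``bubble sort along the chain'' must be organized so that the successive choices assemble into a single word in $W(P)$ acting as claimed. This is routine, but it is the place where it is easiest to be sloppy; everything else is forced by thinness and the Boolean structure of intervals established in Example \ref{simpcomplexex}.
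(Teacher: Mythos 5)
Your argument is correct and is essentially the same as the paper's: both identify maximal chains in an interval of $\mc{F}(\Delta)$ with orderings of $y\setminus x$ via the Boolean lattice structure of the interval, observe that a diamond move is exactly a swap of consecutive entries, and then invoke the fact that adjacent transpositions connect any two orderings. The paper packages this last step as an induction on lexicographic distance to a fixed reference chain, while you connect $C$ to $C'$ directly by bubble sort and explicitly assemble the word in $W(P)$ with the first-applied diamond rightmost; these are cosmetic variations of the same proof.
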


\begin{proof} Let $P$ be the face poset of a simplicial complex. Recall intervals in face posets of simplicial complexes are Boolean lattices. Consider an interval $[x,y]$ in $P$. Then by the above remark, $[x,y]\cong 2^{[n]}$ for some $n$. The set of elements in $2^{[n]}$ of a given rank (that is, all subsets of $[n]$ of a given size) can be totally ordered lexicographically, and this induces a lexicographic on maximal chains. Let $C_0$ denote the lexicographically largest chain $\varnothing\subseteq\{n\}\subseteq\{n-1,n\}\subseteq\dots\subseteq [n]$. 

We now show, by induction on the distance $d(C,C_0)$ in lexicographic order between $C$ and $C_0$ that there exists $w\in D(P)$ such that $wC=C_0$. The base case is trivial since for distance 0 we have $C=C_0$. Otherwise, since $C$ is not lexicographically largest, there is a subchain $S\subseteq S+i\subseteq S+i+j$ of $C$ with $i<j$. Now consider the (lexicographically larger) chain $dC$ where $d=\{S,S+i,S+j,S+i+j\}$. By induction we have $w\in D(P)$ such that $w(dC)=C_0=(wd)C$.  
\end{proof}

\section{The Diamond Space and Balanced Colorability}
\label{bcsec}

In this section, we give a topological interpretation of diamond transitivity. This interpretation turns out to be useful in regard to understanding certain $\{1,-1\}$ colorings (called balanced colorings) of the cover relations of $P$, which we use in Section \ref{functorsonposets} to construct chain complexes associated to functors on thin posets.

\begin{defn}
Given a thin poset $P$, we construct a regular CW complex $X(P)$ as follows:
\begin{enumerate}
    \item The 0-skeleton, $X^0(P)$ of $X(P)$ is the set $P$.
    \item The 1-skeleton, $X^1(P)$ of $X(P)$ is the (undirected) Hasse diagram of $P$, that is, we glue in a 1-cell connecting pairs $(x,y)$ whenever $x\lessdot y$.
    \item The 2-skeleton $X^2(P)=X(P)$ of $X(P)$ is gotten by gluing 2-cells into each diamond in $P$. Since we will add no higher dimensional cells, $X(P)$ is equal to the 2-skeleton. 
\end{enumerate}
Call the resulting regular CW complex $X(P)$ the \textit{diamond space} of $P$. 
\end{defn}

%\begin{theorem}
%For any diamond transitive thin poset $P$, $X(P)$ is homotopy equivalent to a wedge of 2-dimensional spheres.\label{homotopyspheres}
%\end{theorem}
%\begin{proof}
%It suffices to show that $X(P)$ is shellable. 
%\end{proof}

\begin{ex}\label{kcrownex}
Let $P$ be a diamond transitive thin poset, and $I$ an interval in $P$ of length 3. Then by Theorem \ref{thmintervals}, $I$ is isomorphic to the poset shown in Figure \ref{intervals} on 2n vertices for some $n$. If we disregard the long diagonal from rank 1 to rank 2 in Figure \ref{intervals}, we see that gluing in all diamonds produces a 2-dimensional disk $D_1$. Adding back in the long diagonal produces two more diamonds, giving another 2-dimensional disk $D_2$ the diamond space $X(I)$ is exactly $D_1$ glued to $D_2$ along their common boundary, and therefore $X(I)$ is a 2-sphere.
\end{ex}

Let $P$ be a thin poset. Fix a continuous map $\phi:X^1(P)\to\R$, so that for each $a\in X^0(P)$, $\phi(a)=\rk(a)$, and on each closed 1-cell $e$ in $X^1(P)$, corresponding to an edge in $H(P)$ from $a$ to $b$, $\phi|_e$ is a linear isomorphism onto the interval $[\rk(a),\rk(b)]\subseteq \R$. 

\begin{defn}
Let $r_1,r_2\in\R$. A continuous map $\gamma:[r_1,r_2]\to X^1(P)$ is \textit{monotonically increasing} (resp. \textit{monotonically decreasing}) if for all $s,t\in[r_1,r_2]$, $s<t$ implies $\phi(\gamma(s))<\phi(\gamma(t))$ (resp. $\phi(\gamma(s))>\phi(\gamma(t))$).
\end{defn}

\begin{lemma}\label{monotonic}
Let $P$ be a diamond transitive thin poset with $\hat{0}$. Any continuous map $\gamma:[0,1]\to X(P)$ with $\gamma(0)=\gamma(1)=\hat{0}$ is homotopic to a piecewise linear map $\gamma^\prime:[0,1]\to X^1(P)$ with $\gamma^\prime(0)=\gamma^\prime(1)=\hat{0}$ such that there exists $r\in(0,1)$ so that $\gamma^\prime|_{[0,r]}$ is monotonically increasing and $\gamma^\prime|_{[r,1]}$ is monotonically decreasing.
\end{lemma}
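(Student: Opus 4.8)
The plan is to first replace $\gamma$ by a purely combinatorial object, then straighten it by an induction, and to locate the single place where the hypothesis of diamond transitivity is indispensable.

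\textbf{Reduction to an edge path.} Since $X(P)$ is a $2$-dimensional CW complex and $\{\hat{0}\}$ is a subcomplex, cellular approximation lets us homotope $\gamma$, rel $\{0,1\}$, to a map into the $1$-skeleton $X^1(P)$, and then, pushing off the interiors of the $1$-cells, to a piecewise linear edge path: a walk $\hat{0}=v_0,v_1,\dots,v_n=\hat{0}$ in the Hasse diagram with each $\{v_{i-1},v_i\}$ a cover relation. Composing with $\phi$ turns the rank sequence $\rk(v_0),\dots,\rk(v_n)$ into a lattice path from $0$ to $0$ with steps $\pm 1$ that stays $\ge 0$, and a mountain path is exactly one whose profile is $U^kD^k$. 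So it suffices to homotope such a walk, rel endpoints and within $X(P)$, to one with a single peak.

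\textbf{Straightening by induction.} I would use two moves. (i) If the walk \emph{backtracks} (traverses an edge and immediately reverses it) delete the backtrack; this is a homotopy inside $X^1(P)$ and shortens the walk. (ii) At a strict local minimum $v_k$, so $v_{k-1}\gtrdot v_k\lessdot v_{k+1}$ with $v_{k-1}\ne v_{k+1}$: \emph{if} $v_{k-1}$ and $v_{k+1}$ have a common cover $c$, so that $\{v_k,v_{k-1},v_{k+1},c\}$ is a diamond, replace the subpath $v_{k-1}\to v_k\to v_{k+1}$ by $v_{k-1}\to c\to v_{k+1}$; this is a homotopy across the $2$-cell of $X(P)$ glued into that diamond, and it strictly raises the rank of the lowest valley. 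Always applying the moves to the lowest valley, the pair $\bigl(\text{length of the walk};\ (n_0,n_1,\dots)\bigr)$, where $n_j$ counts the rank-$j$ valleys and tuples are compared lexicographically, strictly decreases: move (ii) at a rank-$k$ valley destroys it and can only create valleys of rank $>k$. Since length is bounded below and the tuples take values in a well-founded set, the process terminates at a walk with no backtracks and no valleys, i.e.\ a mountain.

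\textbf{The main obstacle, and where diamond transitivity enters.} The gap in this scheme is a lowest valley $v_k$ whose neighbours $v_{k-1}\ne v_{k+1}$ have \emph{no} common cover, so move (ii) does not apply; this genuinely occurs (already for the face poset of the boundary of a triangle), so it is the real content of the lemma, and it is here that diamond transitivity must be invoked. The cleanest resolution I see sidesteps the local straightening: fix a spanning tree $T$ of the Hasse diagram in which the parent of each $v\ne\hat{0}$ is a chosen cover of rank $\rk(v)-1$, so that the $T$-path $\hat{0}\to v$ is a saturated chain $c(v)$. The standard lollipop identity gives $[\gamma]=\prod_{i=1}^{n}[\ell_i]$ in $\pi_1(X(P),\hat{0})$, where $\ell_i$ is $c(v_{i-1})$ followed by the edge $\{v_{i-1},v_i\}$ followed by $c(v_i)$ reversed. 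Each $\ell_i$ runs up one maximal chain of an interval $[\hat{0},m_i]$ and back down another maximal chain of the same interval; diamond transitivity joins these two maximal chains by a sequence of diamond moves, each of which is a homotopy across a $2$-cell of $X(P)$, so $\ell_i$ is null-homotopic and hence $[\gamma]=1$. Therefore $\gamma$ is null-homotopic, so it is homotopic to the mountain $C\cdot C^{-1}$ for any saturated chain $C$ from $\hat{0}$ (take $r=\tfrac12$), which is the required $\gamma'$. One could instead try to stay with the straightening picture and use diamond transitivity to reroute the offending subpath $v_{k-1}\to v_k\to v_{k+1}$ to one whose interior ranks all exceed $\rk(v_k)$; proving that such a rerouting always exists is precisely the delicate step that the $\pi_1$ argument bypasses, so I would present the lollipop version as the main line and regard the direct rerouting as optional.
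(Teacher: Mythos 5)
Your main-line argument is correct, but it takes a genuinely different route from the paper. The paper proves the lemma by a direct inductive straightening: after cellular/simplicial approximation it partitions the loop into $2\ell$ alternating monotone segments, and when $\ell>1$ it reroutes the entire first ascending segment (from $\hat{0}$ to the first peak $x_1$) to an ascending path of the form $\beta\alpha_2^{-1}$, where $\alpha_2$ is the first descending segment (ending at the first valley $x_2$) and $\beta$ is any new ascending path from $\hat{0}$ to $x_2$; diamond transitivity applied to the interval $[\hat{0},x_1]$ supplies the homotopy, and concatenating with the rest of the loop cancels $\alpha_2^{-1}\alpha_2$, dropping $\ell$ by one. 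So the obstacle you flag — a lowest valley $v_k$ whose two neighbours have no common cover, defeating a \emph{local} diamond move — is real, but the paper sidesteps it not via $\pi_1$ but by applying diamond transitivity \emph{globally} to a whole saturated chain; you did not consider this middle option. Your lollipop/spanning-tree decomposition is a clean alternative, and arguably conceptually nicer: choosing the tree so that each non-root vertex points to a chosen cover makes every generator $\ell_i$ a loop that ascends one maximal chain of $[\hat{0},m_i]$ and descends another, and diamond transitivity kills each one exactly as you say. Two remarks: your argument actually proves the stronger claim that $\pi_1(X(P),\hat{0})$ is trivial, i.e.\ the forward direction of Theorem \ref{dthomology} outright, so in the paper's architecture it would make Lemma \ref{monotonic} redundant rather than a stepping stone; and in deducing the mountain form you should take $C$ to be a saturated chain of positive length (possible once $P\neq\{\hat{0}\}$), since the constant loop itself is not monotonic in the sense required. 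Both points are minor; the proof is sound.
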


\begin{proof}
Let $\gamma:[0,1]\to X(P)$. Using the cellular approximation theorem \cite[Theorem 4.8]{Hatcher2002topology}, we see that $\gamma$ is homotopic to a map $\bar{\gamma}$ whose image is contained in $X^1(P)$. Then, by applying the simplicial approximation theorem \cite[Theorem 2C.1]{Hatcher2002topology} to $\bar{\gamma}$, we see that $\gamma$ is homotopic to a map $\tilde{\gamma}$ that is simplicial with respect to some iterated barycentric subdivision of $[0,1]$. In other words, there is a partition $0=r_0<r_1<\dots<r_{2k}=1$ of $[0,1]$ such that 
\begin{enumerate}
    \item $\tilde{\gamma}(r_0)=\tilde{\gamma}(r_{2k})=\hat{0}$
    \item $\tilde{\gamma}(r_i)\in X^0(P)$ for each $i$ 
    \item For $0\leq i\leq 2k-1$, $\tilde{\gamma}(r_i)\lessdot \tilde{\gamma}(r_{i+1})$ or $\tilde{\gamma}(r_{i+1})\lessdot \tilde{\gamma}(r_{i})$
    \item If $r\in[0,1]\setminus\{r_0,\dots,r_{2k}\}$ then $\tilde{\gamma}(r)\in X^1(P)\setminus X^0(P)$
    \item For $0\leq i\leq 2k-1$, $\tilde{\gamma}$ is linear (in particular either monotonically increasing or decreasing) on $[r_i,r_{i+1}]$. 
\end{enumerate}
Choose $i_1<\dots<i_{2\ell}$ where $\{i_1,\dots,i_{2\ell}\}\subseteq\{1,\dots,2k\}$ such that $\tilde{\gamma}$ is monotonically increasing on $[0,r_{i_1}]$, monotonically decreasing on $[r_{i_1},r_{i_2}]$, monotonically increasing on $[r_{i_1},r_{i_3}]$ and so on. We will show by induction on $\ell$ (that is, half the number of subintervals in such a partition) that $\tilde{\gamma}$ is homotopic to a map $\gamma^\prime$ as desired in the statement of this lemma. If $\ell=1$, then $\tilde{\gamma}$ monotonically increases on $[0,r_{i_1}]$, and monotonically decreases on $[r_{i_1},1]$, so we are done. If $\ell>1$ we proceed as follows. Recall that paths $\alpha,\beta$ can be multiplied to form the product path $\alpha\beta$ which traverses first $\alpha$ and then $\beta$. Also recall that, given a path $\alpha:[a,b]\to X$ in a topological space $X$, we let $\alpha^{-1}$ denote the path defined by $\alpha^{-1}:[-b,-a]\to X$ where $\alpha^{-1}(t)=\alpha(-t)$, so $\alpha^{-1}$ traverses the same points as $\alpha$ but in the opposite direction (see \cite[Section 1.1]{Hatcher2002topology}). In the Hasse diagram $H(P)$, $\tilde{\gamma}|_{[0,r_{i_1}]}$ corresponds to a directed path from $\hat{0}$ to some element $x_1\in P$. Now, $(\tilde{\gamma}|_{[r_{i_1},r_{i_2}]})^{-1}$ corresponds to a directed path in $H(P)$ starting at some element $x_2$ and ending at $x_1$. Let $\hat{0}=y_0\lessdot y_1\lessdot\dots\lessdot y_n=x_2$ be a directed path in $H(P)$, and let $\beta:[0,1]\to X^1(P)$ be a monotonically increasing piecewise linear map corresponding to this directed path. Then $\beta (\tilde{\gamma}|_{[r_{i_1},r_{i_2}]})^{-1}$ is piecewise linear and monotonically increasing from $\hat{0}$ to $x_1$. Since $P$ is diamond transitive, there is a sequence of diamond moves taking the path $\tilde{\gamma}|_{[0,r_{i_1}]}$ to the path $\beta(\tilde{\gamma}|_{[r_{i_1},r_{i_2}]})^{-1}$. Each diamond move defines a homotopy in $X(P)$ taking a path along one side of the diamond to the path along the other side. Thus, $\tilde{\gamma}=\tilde{\gamma}|_{[0,r_{i_1}]}\tilde{\gamma}|_{[r_{i_1},1]}$ is homotopic to the path $\beta (\tilde{\gamma}|_{[r_{i_1},r_{i_2}]})^{-1}\tilde{\gamma}|_{[r_{i_1},1]}$ which is homotopic to  $\gamma^\prime=\beta (\tilde{\gamma}|_{[r_{i_2},1]})$. Notice that $\gamma^\prime$ has a partition of its domain $0<r_{i_3}<\dots<r_{i_{2\ell}}$ with $2(\ell-1)$ subintervals so that $\gamma^\prime$ is monotonic on each subinterval. Thus we are done by induction.
\end{proof}

\begin{theorem}
Let $P$ be a thin poset with $\hat{0}$. Then $P$ is diamond transitive if and only if $\pi_1(X(P),\hat{0})$ is trivial.
\label{dthomology}
\end{theorem}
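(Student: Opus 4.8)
The plan is to isolate one ``rewriting'' statement comparing diamond moves with path homotopies in $X(P)$, deduce both implications of the theorem from it (the forward direction also using Lemma~\ref{monotonic}), and then prove the rewriting statement by a disk-diagram argument. For a saturated chain $C\in\mc{C}_{x,y}$, let $\alpha_C$ denote the monotonically increasing piecewise linear path in $X^1(P)$ tracing the corresponding directed path of $H(P)$ from $x$ to $y$. A single diamond move $C\mapsto dC$ reroutes a two-edge subpath of $\alpha_C$ along the opposite side of a diamond $2$-cell of $X(P)$, hence is a homotopy rel endpoints supported in that $2$-cell; so if $C$ and $C'$ are connected by diamond moves then $\alpha_C\simeq\alpha_{C'}$ rel $\{x,y\}$ in $X(P)$. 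The content of the theorem is the converse, which I will call $(\star)$: \emph{for $x\le y$ in $P$ and $C,C'\in\mc{C}_{x,y}$, if $\alpha_C\simeq\alpha_{C'}$ rel $\{x,y\}$ in $X(P)$ then $C$ and $C'$ are connected by a sequence of diamond moves.}

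Granting $(\star)$, both directions follow quickly. For ``$\Rightarrow$'': assume $P$ is diamond transitive and let $\gamma$ be a loop in $X(P)$ at $\hat 0$. Lemma~\ref{monotonic} (whose hypothesis now holds) gives a based-homotopic piecewise linear loop $\gamma'$ that increases on $[0,r]$ and decreases on $[r,1]$; after removing a possible backtrack at the top we may assume $x:=\gamma'(r)\in X^0(P)$, so $\gamma'|_{[0,r]}=\alpha_{C_1}$ and $(\gamma'|_{[r,1]})^{-1}=\alpha_{C_2}$ for chains $C_1,C_2\in\mc{C}_{\hat 0,x}$. Diamond transitivity connects $C_1$ to $C_2$ by diamond moves, so $\alpha_{C_1}\simeq\alpha_{C_2}$ rel $\{\hat 0,x\}$ and therefore $[\gamma]=[\gamma']=[\alpha_{C_1}\overline{\alpha_{C_2}}]=1$; hence $\pi_1(X(P),\hat 0)$ is trivial. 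For ``$\Leftarrow$'': if $\pi_1(X(P),\hat 0)$ is trivial then, $X(P)$ being path connected, $\pi_1(X(P),x)$ is trivial for every $x\in P$; so for any $x\le y$ and $C,C'\in\mc{C}_{x,y}$ the loop $\alpha_C\,\overline{\alpha_{C'}}$ based at $x$ is null-homotopic, i.e.\ $\alpha_C\simeq\alpha_{C'}$ rel $\{x,y\}$, and $(\star)$ connects $C$ to $C'$ by diamond moves. As $x\le y$ was arbitrary, $P$ is diamond transitive.

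To prove $(\star)$: since $\alpha_C\,\overline{\alpha_{C'}}$ is null-homotopic in the $2$-complex $X(P)$, apply cellular approximation to a nullhomotopy $D^2\to X(P)$ (as in the proof of Lemma~\ref{monotonic}, one dimension higher) to obtain a van Kampen-type disk diagram: a finite, simply connected, planar $2$-complex $\Delta$ with a cellular map $f\colon\Delta\to X(P)$ whose $2$-cells map onto diamond $2$-cells of $X(P)$ and whose boundary circuit splits as $\partial\Delta=A\cup A'$, with $A,A'$ carrying $\alpha_C,\alpha_{C'}$ and meeting at the two vertices lying over $x$ and over $y$. The function $\psi=\rk\circ f$ on vertices, extended piecewise linearly over the edges and (after subdividing each square $2$-cell) over $\Delta$, is a Morse-type function strictly increasing along both $A$ and $A'$, so $\psi|_{\partial\Delta}$ is unimodal with minimum over $x$ and maximum over $y$. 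Sweeping $\Delta$ by the level sets of $\psi$, a non-critical level set is an arc from $A$ to $A'$ recording an increasing path in $X^1(P)$ --- an element of $\mc{C}_{x,y}$ --- running from $C$ at the bottom to $C'$ at the top, and crossing each critical value changes this chain by a single passage across one $2$-cell of $\Delta$, i.e.\ by one diamond move (or a trivial backtrack cancellation); concatenating the moves proves $(\star)$. Equivalently one can induct on the number of $2$-cells of $\Delta$, peeling off at each stage a boundary $2$-cell along a maximal ascending subarc of its diamond, leaving a residual diagram with boundary $\alpha_{\tilde C}\,\overline{\alpha_{C'}}$ for a chain $\tilde C$ obtained from $C$ by one diamond move.

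The hard part will be controlling this sweep (equivalently, the peeling): one must ensure that every intermediate level curve is an honest monotone, non-self-intersecting path in the Hasse diagram --- so that it genuinely lies in $\mc{C}_{x,y}$, with no ``peak'' admitting no diamond to push it across --- and that each critical crossing is exactly one legal diamond move rather than a move that leaves the set of maximal chains. This is a peak-reduction argument whose viability rests on the unimodality of $\psi|_{\partial\Delta}$ (which is exactly where the hypothesis $C,C'\in\mc{C}_{x,y}$ enters) together with thinness of $P$, the latter governing the ascending and descending links of the interior vertices of $\Delta$, whose neighborhoods are unions of diamonds through a common element. Checking that such vertices never force the level curve off the set of maximal chains, and that the peelable $2$-cell in the inductive formulation always exists, is where the real work lies.
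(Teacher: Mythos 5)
Your two-directional outline mirrors the paper's own proof: for the forward direction you invoke Lemma~\ref{monotonic} to reduce an arbitrary based loop to a monotone up--then--down loop, read off two saturated chains, and homotope across the diamonds supplied by transitivity; for the converse you cellular-approximate a nullhomotopy of $\alpha_C\,\overline{\alpha_{C'}}$ into a subdivided disk and try to extract a sequence of diamond moves. Your observation that triviality of $\pi_1(X(P),\hat 0)$ propagates by path-connectedness to every basepoint, so that $\mc{C}_{x,y}$ for \emph{all} pairs $x\le y$ (and not merely $\mc{C}_{\hat 0,x}$) is handled, is a useful sharpening: the paper's converse as written only treats chains starting at $\hat 0$, and this point deserves to be made explicit.

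That said, the proposal is incomplete, and you say so yourself: the reduction to the rewriting statement~$(\star)$ is clean, but you never actually prove~$(\star)$. The Morse-sweep/peeling picture is the right heuristic, but, as you correctly flag, a non-critical level set of $\psi=\rk\circ f$ need not trace a monotone, non-self-intersecting path in $H(P)$ (so it need not lie in $\mc{C}_{x,y}$), interior saddle points of $\psi$ can occur, level sets can have closed components, and a critical crossing could a priori correspond to a backtrack-cancellation or to a move that leaves the set of saturated chains rather than to a single legal diamond move. Handling all of this is a genuine peak-reduction argument that uses thinness of $P$ to control the ascending and descending links of interior vertices of the diagram, and your draft stops precisely where that argument would need to begin. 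It is fair to note that this is the same step the paper compresses into one sentence (``we get a subdivision of $[0,1]\times[0,1]$ into sub-rectangles, each sub-rectangle corresponding to a diamond move in $P$'') without further justification; you have identified the real content of that sentence, but you have not supplied it, so the proof as proposed is unfinished.
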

\begin{proof}
Suppose that $P$ is diamond transitive. By Lemma \ref{monotonic}, any loop $\gamma:[0,1]\to X(P)$ based at $\hat{0}$ can be homotoped to a loop $\gamma^\prime:[0,1]\to X^1(P)$ such that there exists $r\in(0,1)$ for which $\gamma^\prime|_{[0,r]}$ increases monotonically to some element $x\in P$ and $\gamma^\prime|_{[r,1]}$ decreases monotonically from $x$ back to $\hat{0}$. Let $C_1=(\hat{0}=x_0\lessdot x_1\lessdot\dots\lessdot x_k=x)$ be the saturated chain in $P$ corresponding to $\gamma^\prime|_{[0,r]}$ and let $C_2=(\hat{0}=y_0\lessdot y_1\lessdot\dots\lessdot y_k=x)$ be the saturated chain in $P$ corresponding to $(\gamma^\prime|_{[r,1]})^{-1}$. Since $P$ is diamond transitive, there is a sequence of diamond moves that takes $C_1$ to $C_2$, and this defines a homotopy from $\gamma^\prime|_{[0,r]}$ to $(\gamma^\prime|_{[r,1]})^{-1}$. Thus $\gamma^\prime$ is homotopic to $\gamma^\prime|_{[0,r]}(\gamma^\prime|_{[0,r]})^{-1}$ which is homotopic to the constant map at $\hat{0}$. Thus we have shown that any loop in $X(P)$ based at $\hat{0}$ is nullhomotopic. 

Conversely, let $\hat{0}=x_0\lessdot\dots\lessdot x_k=x$ and $\hat{0}=y_0\lessdot\dots\lessdot y_k=x$ be saturated chains.
Correspondingly, we have paths $\beta,\gamma:[0,1]\to X(P)$ proceeding along the edges of each chain, and by assumption $\beta\gamma^{-1}$ is nullhomotopic, or equivalently, there is a homotopy $h:[0,1]\times[0,1]\to X(P)$ so that $h(t,0)=\beta(t)$ and $h(t,1)=\gamma(t)$ for all $t\in[0,1]$, $h(0,s)=\hat{0}$, and $h(1,s)=x$ for all $s\in[0,1]$. 
Applying cellular approximation to this homotopy, we get a subdivision of $[0,1]\times[0,1]$ into sub-rectangles, each sub-rectangle corresponding to a diamond move in $P$.
\end{proof}

\begin{defn}[{\cite[Section 2.7]{bjorner2006combinatorics}}]
A \textit{balanced coloring} on a thin poset $P$ is a function $c:C(P)\to\{1,-1\}$ if each diamond in $P$ is assigned an odd number of $-1$ colored edges. A thin poset $P$ is \textit{balanced colorable} if it admits a balanced coloring.
\label{balanceddef}
\end{defn}

We have seen in Theorem \ref{dthomology} a relationship between the combinatorial property of diamond transitivity in $P$ and the topology of the space $X(P)$. Next, we show that the property of balanced colorability can also be thought of in terms of the diamond space $X(P)$. Balanced colorings of $P$ are $\{1,-1\}$ colorings of the edges in the Hasse diagram $H(P)$. Thus balanced colorings can be viewed as elements of the cochain group $C^1(X(P),\Z_2)=\Hom(C_1(X(P),\Z_2)$. Warning: the coefficient groups for cohomology are usually written with additive notation, however because of our situation, we will use multiplicative notation, identifying $(\{0,1\},+)$ with $(\{1,-1\},\cdot)$. 
Consider the function $\phi_0:C_2(X(P))\to\Z_2=\{1,-1\}$ sending every 2-cell to $-1\in\Z_2$. Recall, 2-cells correspond to diamonds in $P$, so we will denote 2-cells by $d$ and each 2-cell has boundary consisting of four edges $\{e_1^d,e_2^d,e_3^d,e_4^d\}$. Given $\psi:C_1(X(P))\to \Z_2$, $\delta\psi:C_2(X(P))\to\Z_2$ is defined on 2-cells $d$ with boundary $\{e_1^d,e_2^d,e_3^d,e_4^d\}$ by $\delta\psi(d)=\psi(e^d_1)\psi(e^d_2)\psi(e^d_3)\psi(e^d_4)$. If $\delta\psi=\phi_0$, then $\psi(e^d_1)\psi(e^d_2)\psi(e^d_3)\psi(e^d_4)=-1$ for each diamond $d$, or in other words, $\psi$ is a balanced coloring. Since $X(P)$ has no 3-cells, $\phi_0$ is guaranteed to be a cocycle. Since we are working over the field $\Z_2$, we have $H^k(X(P),\Z_2)\cong \Hom(H_k(X(P),\Z_2),\Z_2)$ for all $k$. Thus we have proven the following.

\begin{prop}\label{weakcolorable}
Let $\phi_0:C_2(X(P))\to\Z_2=\{1,-1\}$ denote the cocycle sending every 2-cell to $-1\in\Z_2$. Then $P$ is balanced colorable if and only if $[\phi_0]$ is trivial in $H^2(X(P),\Z_2)$. 
\end{prop}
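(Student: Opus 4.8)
The plan is to carry out the cochain-level bookkeeping sketched just above the statement and then read off the cohomological conclusion. First I would set up the dictionary: identifying $\Z_2$ with $\{1,-1\}$ multiplicatively, a coloring $c\colon C(P)\to\{1,-1\}$ of the cover relations is precisely a $1$-cochain $\psi_c\in C^1(X(P),\Z_2)$, since the $1$-cells of $X(P)$ are exactly the edges of the Hasse diagram $H(P)$.

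Next I would compute the coboundary $\delta\colon C^1(X(P),\Z_2)\to C^2(X(P),\Z_2)$ explicitly. Each $2$-cell of $X(P)$ is a diamond $d$, glued in along the $4$-cycle of edges $e_1^d,e_2^d,e_3^d,e_4^d$ of $H(P)$ bounding $d$; because $X(P)$ is a \emph{regular} CW complex, the attaching map of $d$ is a homeomorphism onto this $4$-cycle, so the cellular boundary of $d$ reduces mod $2$ to $e_1^d+e_2^d+e_3^d+e_4^d$. Dualizing, $(\delta\psi)(d)=\psi(e_1^d)\psi(e_2^d)\psi(e_3^d)\psi(e_4^d)$ for every $\psi\in C^1(X(P),\Z_2)$. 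Now observe that the defining condition for a balanced coloring — each diamond $d$ carries an odd number of edges colored $-1$ — says exactly that $\psi_c(e_1^d)\psi_c(e_2^d)\psi_c(e_3^d)\psi_c(e_4^d)=-1$ for all $d$, i.e.\ $\delta\psi_c=\phi_0$. Hence $P$ admits a balanced coloring if and only if $\phi_0$ lies in the image of $\delta\colon C^1\to C^2$, i.e.\ $\phi_0\in B^2(X(P),\Z_2)$.

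Finally I would note that $X(P)$ has no cells of dimension $\geq 3$, so $C^3(X(P),\Z_2)=0$ and therefore $Z^2(X(P),\Z_2)=C^2(X(P),\Z_2)$; in particular $\phi_0$ is automatically a cocycle, and the class $[\phi_0]\in H^2(X(P),\Z_2)=Z^2/B^2$ is defined. Combining with the previous step: $P$ is balanced colorable $\iff\phi_0\in B^2(X(P),\Z_2)\iff[\phi_0]=0$ in $H^2(X(P),\Z_2)$, which is the assertion.

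As for obstacles, there is essentially none — the whole argument is a careful unwinding of the definitions of balanced coloring and of cellular cohomology. The one point meriting a sentence of care is the mod-$2$ computation of the cellular boundary of a diamond $2$-cell, and that is immediate from the regularity of $X(P)$, which forces each of the four bounding edges to appear in $\partial d$ exactly once.
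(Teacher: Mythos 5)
Your argument is correct and follows essentially the same route as the paper: identify colorings with $1$-cochains, compute $\delta\psi(d)=\psi(e_1^d)\psi(e_2^d)\psi(e_3^d)\psi(e_4^d)$, observe that $\delta\psi=\phi_0$ is exactly the balanced-coloring condition, and use the absence of $3$-cells to conclude $Z^2=C^2$ so that $[\phi_0]=0$ iff $\phi_0$ is a coboundary. Your extra sentence on why regularity of $X(P)$ forces each bounding edge to appear once in $\partial d$ mod $2$ is a small but welcome justification that the paper leaves implicit.
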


In particular, this says that if $H^2(X(P),\Z_2)$ is trivial, then $P$ is balanced colorable. However, after working out any number of simple examples, the reader will notice that this is very rarely going to be the case. The following result will be applicable in a wider variety of situations.

\begin{prop}\label{colorable}
Let $P$ be a thin poset and suppose that $X(P)$ forms the 2-skeleton of a regular CW complex $Z(P)$ in which each 3-cell has an even number of 2-dimensional faces, and suppose that $H^2(Z(P),\Z_2)=0$. Then $P$ is balanced colorable.
\end{prop}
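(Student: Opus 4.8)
The plan is to deduce this from Proposition \ref{weakcolorable}, which says that $P$ is balanced colorable precisely when the distinguished cocycle $\phi_0\in C^2(X(P),\Z_2)$ (sending every $2$-cell to $-1$) represents the trivial class in $H^2(X(P),\Z_2)$; so it suffices to produce $\psi\in C^1(X(P),\Z_2)$ with $\delta\psi=\phi_0$. First I would record the elementary observation that, since $X(P)$ is the $2$-skeleton of $Z(P)$, the two complexes have the same cells in dimensions $\le 2$; hence their cellular chain groups agree through degree $2$, and so do the $\Z_2$-cochain groups $C^0,C^1,C^2$ together with the coboundary maps $C^0\to C^1\to C^2$. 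In particular $\phi_0$ may be viewed as a $2$-cochain on $Z(P)$, and a $1$-cochain $\psi$ satisfies $\delta\psi=\phi_0$ in $C^2(X(P),\Z_2)$ if and only if it does so in $C^2(Z(P),\Z_2)$.

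Next I would check that $\phi_0$ is a \emph{cocycle} on $Z(P)$, i.e.\ $\delta\phi_0=1$ in $C^3(Z(P),\Z_2)$. Because $Z(P)$ is regular, all incidence numbers between cells of consecutive dimension are $\pm 1$, so over $\Z_2$ (in the multiplicative notation $\{1,-1\}$ used in the paper) the value of $\delta\phi_0$ on a $3$-cell $\sigma$ is simply $\prod_{d}\phi_0(d)$, the product over the $2$-dimensional faces $d$ of $\sigma$. Since $\phi_0(d)=-1$ for every such $d$, this product equals $(-1)^{m(\sigma)}$, where $m(\sigma)$ is the number of $2$-dimensional faces of $\sigma$; by hypothesis $m(\sigma)$ is even, so $\delta\phi_0(\sigma)=1$. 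As this holds for every $3$-cell, $\phi_0$ is a cocycle in $C^2(Z(P),\Z_2)$.

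Finally, since $H^2(Z(P),\Z_2)=0$, every $2$-cocycle on $Z(P)$ is a coboundary; in particular $\phi_0=\delta\psi$ for some $\psi\in C^1(Z(P),\Z_2)=C^1(X(P),\Z_2)$. By the first paragraph this identity $\delta\psi=\phi_0$ also holds in $C^2(X(P),\Z_2)$, so $[\phi_0]$ is trivial in $H^2(X(P),\Z_2)$, and Proposition \ref{weakcolorable} yields that $P$ is balanced colorable. The argument is essentially formal once the comparison of the two cochain complexes is set up; the only step needing a moment's care is the parity computation of $\delta\phi_0$ on $3$-cells, where one must use both that we work over $\Z_2$ (so incidence numbers drop out and only the set of $2$-faces matters) and the evenness hypothesis on $Z(P)$ — this is the sole place that hypothesis is used, and I do not expect a genuine obstacle beyond it.
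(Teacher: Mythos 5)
Your argument is correct and follows the paper's proof essentially step for step: identify $\phi_0$ as a $2$-cochain on $Z(P)$ via the shared $2$-skeleton, use the parity hypothesis to verify $\delta\phi_0=1$ on every $3$-cell, invoke $H^2(Z(P),\Z_2)=0$ to write $\phi_0=\delta\psi$, and transfer $\psi$ back to $X(P)$ to apply Proposition \ref{weakcolorable}. The only cosmetic difference is your explicit remark that over $\Z_2$ the incidence numbers are irrelevant, which the paper leaves implicit.
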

\begin{proof}
Since $X(P)$ and $Z(P)$ have the same 2-skeleton, we have that $C^2(X(P),\Z_2)=C^2(Z(P),\Z_2)$ so we can identify $\phi_0$ as a 2-cochain in $Z(P)$. Let $F$ be a 3-cell in $Z(P)$. By assumption, $F$ has an even number of 2-dimensional faces, $d_1,\dots, d_{2n}$. In our multiplicative notation ($\Z_2=\{1,-1\}$) we have $\delta\phi_0(F)=\prod_{i=1}^{2n}\phi_0(d_i)=(-1)^{2n}=1$. Therefore $\delta\phi_0$ is the identity element in $C^3(Z(P),\Z_2)$, so $\phi_0\in C^2(Z(P),\Z_2)$ is a cocycle. Since $H^2(Z(P),\Z_2)=0$, $\phi_0$ is a coboundary. Thus $\phi_0=\delta\psi$ for some $\psi:C_1(Z(P))\to\Z_2$. Since $X(P)$ and $Z(P)$ have the same 1-skeleton, we have $C_1(X(P))=C_1(Z(P))$ so we can identify $\psi:C_1(X(P))\to\Z_2$ as a 1-cochain in $X(P)$. Therefore we have that $[\phi_0]$ is trivial in $H^2(X(P),\Z_2)$ and so by Propsitition \ref{weakcolorable}, $P$ is balanced colorable.
\end{proof}

\begin{remark}
Given a poset $P$, in Theorem \ref{colorable} we are asking whether there exists a regular CW complex with certain properties whose 1-skeleton is the Hasse diagram of $P$. This is a generalization of the following question asked by Hersh \cite{hersh2018posets}: which posets $P$ arise as 1-skeleta of convex polytopes? This question arose in quite a different context, as a condition under which one can prove the nonrevisiting path conjecture.
\end{remark}

\begin{ex}
As an easy application of Proposition \ref{colorable}, we see that the Boolean lattice $B_n$ is balanced colorable since $X(B_n)$ is the 2-skeleton of the $n$-cube. Note that every 3-cell in an $n$-cube is a 3-cube, and thus has an even number of faces. 
Khovanov makes use of such a balanced coloring in  \cite[Section 3.3]{khovanov1999categorification}, en route to constructing a homology theory which categorifies the Jones polynomial.
\end{ex}

\begin{ex}
Let $P$ be a diamond transitive thin poset. Then by Example \ref{kcrownex}, one can construct a space $Z_3(P)$ by gluing in 3-cells to $X(P)$ along each interval of length 3 in $P$, and it is clear from the picture in Figure \ref{intervals} that each such 3-cell has an even number of 2-dimensional faces. One might expect that since we have ``filled'' all of the obvious 2-dimensional ``holes'' in $X(P)$, that we must have $H^2(Z(P),\Z_2)=0$. However, one can check that for the poset in Example \ref{dtnotcw} that this is not the case.

With this in mind, one might consider the following construction. Given a poset $P$, let $\Int(P)$ denote the poset of intervals of $P$ partially ordered by containment. Given that $P$ is diamond transitive, one might hope that $\Int(P)$ is the face poset of a regular CW complex $Z(P)$ with the properties required in Proposition \ref{colorable}. In fact, in the case that $P$ is the face poset of the boundary of a polytope, Bj\"{o}rner shows in \cite[Theorem 6.1]{bjorner1984posets} that $\Int(P)$ is also the face poset of regular CW decomposition of a sphere. This, together with Proposition \ref{colorable} gives an alternative proof that face posets of polytopes are balanced colorable. This suggests the following question: for which thin posets $P$ does it follow that $\Int(P)$ is the face poset of a regular CW complex $Z(P)$ whose second cohomology vanishes and all of whose 3-cells have an even number of 2-dimensional faces?
\end{ex}

We remain unaware whether all diamond transitive posets are balanced colorable, but we consider ourselves content, for now, to have the following result of Bj\"{o}rner.

\begin{theorem}[{\cite[Corollary 2.7.14]{bjorner2006combinatorics}}]
    CW posets are balanced colorable.
    \label{cwbalanced}
\end{theorem}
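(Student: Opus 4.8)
The plan is to read an explicit balanced coloring of $P$ directly off the boundary maps of a regular CW complex realizing $P$, so that the whole content of the proof is the relation $\partial^{2}=0$. By Theorem~\ref{cwresults}(1) I would begin by fixing a finite regular CW complex $\Gamma$ with $P=\mc{F}(\Gamma)$; by our standing convention $\mc{F}(\Gamma)$ contains the empty face $\hat{0}=\varnothing$, a $d$-cell of $\Gamma$ has rank $d+1$ in $P$, and $P$ is thin by Theorem~\ref{cwresults}(2). The two facts about regular CW complexes that the argument needs (see \cite[Appendix~A2.5]{bjorner2006combinatorics}) are: (i) for every cell $\sigma$ of $\Gamma$ and every facet $\tau\lessdot\sigma$ the incidence number $[\sigma:\tau]$ is $\pm1$; and (ii) the cellular chain complex of $\Gamma$ satisfies $\partial^{2}=0$, and after augmenting by $\epsilon\colon C_{0}(\Gamma)\to\Z$, $v\mapsto1$, one still has $\epsilon\circ\partial_{1}=0$. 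Fact (i) is the only nonformal input, and it is precisely where one uses regularity rather than mere thinness of $\mc{F}(\Gamma)$: the attaching maps are homeomorphisms, so each facet is hit with degree $\pm1$.

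With that in hand I would define $c\colon C(P)\to\{1,-1\}$ by $c(\varnothing\lessdot v)=1$ for every vertex $v$ and $c(\tau\lessdot\sigma)=[\sigma:\tau]$ for every cover relation between nonempty faces; by (i) this is well defined. It then remains to verify that each diamond receives an odd number of $-1$'s, that is, that the product of the four colors around it is $-1$. Let $d=\{x,a,b,y\}$ with $x\lessdot a\lessdot y$ and $x\lessdot b\lessdot y$. If $x=\varnothing$ then $y$ is an edge with distinct endpoints $a,b$, and $\epsilon\circ\partial_{1}=0$ gives $[y:a]+[y:b]=0$, so the product of the four colors is $1\cdot[y:a]\cdot1\cdot[y:b]=-1$. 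If $x$ is a $k$-cell with $k\geq0$ then, since $[x,y]$ is an interval of length $2$ in the thin poset $P$, $a$ and $b$ are the only two $(k+1)$-cells strictly between $x$ and $y$; comparing the coefficient of $x$ on the two sides of $\partial_{k+1}\partial_{k+2}y=0$ yields $[y:a][a:x]+[y:b][b:x]=0$, and since every factor is $\pm1$ the product of the four colors $c(x\lessdot a)\,c(a\lessdot y)\,c(x\lessdot b)\,c(b\lessdot y)=[a:x][y:a][b:x][y:b]=-1$. Hence $c$ is a balanced coloring and $P$ is balanced colorable.

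I do not anticipate any real obstacle beyond citing fact (i); the rest is bookkeeping resting on the thinness recorded in Theorem~\ref{cwresults} and on $\partial^{2}=0$. It is instructive to see why this does not extend to arbitrary diamond transitive posets: the identical recipe, carried out for a general thin poset without an ambient regular CW structure, would only produce a $\{0,\pm1\}$-valued labeling of the cover relations, and nothing prevents the value $0$ from occurring — this is exactly the gap between CW posets and the larger class of diamond transitive posets mentioned in Remark~\ref{cwremark}. As a possible alternative one could instead try to apply Proposition~\ref{colorable}, looking for a regular CW complex $Z(P)$ whose $2$-skeleton is $X(P)$, all of whose $3$-cells have an even number of $2$-dimensional faces, and with $H^{2}(Z(P),\Z_{2})=0$; when $P$ is the face poset of the boundary of a polytope this succeeds with $Z(P)=\Int(P)$ by \cite[Theorem~6.1]{bjorner1984posets}, but it is unclear that a suitable $Z(P)$ exists for a general CW poset, so I would pursue the incidence-number construction above.
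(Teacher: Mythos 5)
Your proof is correct, and it takes a genuinely different route from the paper's. The paper proves Theorem~\ref{cwbalanced} by citation: it refers to Bj\"{o}rner--Brenti's Corollary~2.7.14, which establishes balanced colorability for Bruhat orders, and remarks that the proof given there works unchanged for all CW posets. You instead supply a self-contained topological argument: realize $P$ as the face poset $\mc{F}(\Gamma)$ of a regular CW complex (Theorem~\ref{cwresults}(1)), read the coloring off the $\pm 1$ incidence numbers of the cellular chain complex, declaring $c\equiv 1$ on cover relations out of $\hat 0=\varnothing$, and observe that the diamond condition is the coefficient-by-coefficient content of $\partial^{2}=0$ (together with $\epsilon\circ\partial_{1}=0$ for the diamonds sitting over $\hat 0$). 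Both routes are valid, but yours is more transparent about where regularity enters: the attaching maps being homeomorphisms is exactly what forces each incidence number into $\{-1,1\}$, and that is the only non-formal ingredient — thinness of the face poset supplies the reduction to exactly two middle elements per diamond, after which $\partial^2=0$ does the rest. Your concluding observation is apt: for a non-regular CW decomposition (or in the absence of any ambient CW structure at all) incidence numbers can vanish or have modulus greater than $1$, so the same recipe does not produce a $\{1,-1\}$-coloring, which is consistent with the paper's acknowledged uncertainty about whether all diamond transitive posets are balanced colorable. Your alternative — invoking Proposition~\ref{colorable} with a suitable $Z(P)$, which works when $P$ is the face poset of a polytope boundary via \cite[Theorem~6.1]{bjorner1984posets} — is also sound, but as you observe it is not clear how to build $Z(P)$ for a general CW poset, so the incidence-number route is the better one here.
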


\begin{proof}
    This follows from \cite[Corollary 2.7.14]{bjorner2006combinatorics} by noticing the proof is not specific to the Bruhat order, but holds for all CW posets.
\end{proof}

In Section \ref{categoryofpoco}, we consider a category structure on tuples $(P,\mc{A},F,c)$ where $c$ is a balanced coloring. Morphisms in this category thus require a way to pass between different balanced colorings. 

\begin{defn}
Given a thin poset $P$, a \textit{central coloring} of $P$ is a map $c:C(P)\to\{1,-1\}$ such that every diamond has an even number of $-1$'s. Let $\Phi=\Phi_P$ denote the group of all edge colorings $f:C(P)\to\Z_2=\{1,-1\}$ of the Hasse diagram, with group operation defined by pointwise multiplication of functions. Let $\Phi^b\subseteq \Phi$ denote the set of balanced colorings and let $\Phi^c\subseteq \Phi$ denote the set of central colorings. Let $1=1_\Phi$ denote the identity element of $\Phi$, that is, the constant function on $C(P)$ with value 1.
\end{defn}

\begin{lemma}
    \label{phic}
    For any $c_1,c_2\in\Phi^b$, the product $c_1c_2$ is in $\Phi^c$.
\end{lemma}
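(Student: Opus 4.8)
The plan is to reformulate both the balanced and central conditions multiplicatively and then observe that the assignment sending an edge-coloring to its collection of diamond-products is compatible with the group operation on $\Phi$. First I would record the translation: by Definition \ref{balanceddef}, a coloring $c\in\Phi$ lies in $\Phi^b$ exactly when, for every diamond $d$ with boundary edges $e_1^d,e_2^d,e_3^d,e_4^d$, one has $c(e_1^d)c(e_2^d)c(e_3^d)c(e_4^d)=-1$ (among four values in $\{1,-1\}$, having an odd number of $-1$'s is the same as having product $-1$), and likewise $c\in\Phi^c$ exactly when this product equals $1$ for every diamond.

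Next, given $c_1,c_2\in\Phi^b$, for each diamond $d$ I would compute, using that the group operation on $\Phi$ is pointwise multiplication and that $\{1,-1\}$ is abelian,
\[
(c_1c_2)(e_1^d)\,(c_1c_2)(e_2^d)\,(c_1c_2)(e_3^d)\,(c_1c_2)(e_4^d)=\Big(\prod_{i=1}^{4}c_1(e_i^d)\Big)\Big(\prod_{i=1}^{4}c_2(e_i^d)\Big)=(-1)(-1)=1 .
\]
Since this holds for every diamond $d$, the coloring $c_1c_2$ assigns an even number of $-1$'s to each diamond, so $c_1c_2\in\Phi^c$, as claimed.

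I do not anticipate any real obstacle: the statement is an immediate parity computation. The only point requiring a little care is the passage between the ``odd number of $-1$ edges'' phrasing of Definition \ref{balanceddef} and the ``product of the four edge values'' phrasing used above, which is trivial over $\Z_2=\{1,-1\}$; everything else is a one-line application of the fact that taking the product of the four boundary-edge values is multiplicative in the coloring.
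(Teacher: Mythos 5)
Your proof is correct and is essentially the same as the paper's: both restate the balanced and central conditions as the product of the four boundary-edge values equaling $-1$ or $1$ respectively, then observe that the pointwise product $c_1c_2$ has diamond-product $(-1)(-1)=1$.
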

\begin{proof}
    Consider a diamond with edges $e,f,g,h$. Then \[c_1(e)c_1(f)c_1(g)c_1(h)=-1 \ \ \ \ \ \ \textnormal{and} \ \ \ \ \ \ c_2(e)c_2(f)c_2(g)c_2(h)=-1\] so \[(c_1c_2)(e)(c_1c_2)(f)(c_1c_2)(g)(c_1c_2)(h)=1. \qedhere \] 
\end{proof}

\begin{lemma}\label{phicaction}
The subgroup $\Phi^c\leq\Phi$ acts freely and transitively on $\Phi^b$.
\end{lemma}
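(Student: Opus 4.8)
The plan is to treat $\Phi$ as an elementary abelian $2$-group and then deduce the three requirements — well-definedness, freeness, transitivity — from the same four-edge computation used in Lemma \ref{phic}. First I would record the ambient structure: $\Phi \cong \prod_{C(P)}\Z_2$ is abelian and every element satisfies $f^2 = 1_\Phi$, so in particular $f^{-1}=f$ for all $f\in\Phi$. I would also note that $\Phi^c$ is genuinely a subgroup: writing $\delta(f)(d)=f(e_1^d)f(e_2^d)f(e_3^d)f(e_4^d)$ for the four edges $e_i^d$ of a diamond $d$ defines a homomorphism $\delta\colon\Phi\to\prod_{d}\Z_2$, and $\Phi^c=\ker\delta$, while $\Phi^b=\delta^{-1}(\phi_0)$ is the coset lying over the constant function $\phi_0$ with value $-1$ (the cocycle of Proposition \ref{weakcolorable}). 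Thus $\Phi^b$ is either empty or a single coset of $\Phi^c$ in $\Phi$, which abstractly already packages all three claims; the remaining work is just to spell this out in elementary terms.

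Concretely, for well-definedness I would take $c\in\Phi^c$, $b\in\Phi^b$, and a diamond $d$ with edges $e,f,g,h$, and compute
\[
(cb)(e)(cb)(f)(cb)(g)(cb)(h)=\bigl(c(e)c(f)c(g)c(h)\bigr)\bigl(b(e)b(f)b(g)b(h)\bigr)=(1)(-1)=-1,
\]
so $cb\in\Phi^b$; associativity of pointwise multiplication and $1_\Phi\cdot b=b$ then make $c\cdot b:=cb$ a group action. For freeness, if $c\cdot b=b$ with $b\in\Phi^b$, multiplying on the right by $b$ and using $b^2=1_\Phi$ gives $c=1_\Phi$. For transitivity, given $b_1,b_2\in\Phi^b$, set $c=b_1b_2$; Lemma \ref{phic} gives $c\in\Phi^c$, and then $c\cdot b_1=b_1b_2b_1=b_1^2b_2=b_2$ using commutativity and $b_1^2=1_\Phi$. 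If $\Phi^b=\varnothing$ all three conditions hold vacuously, though that case does not arise when $P$ is balanced colorable, which is the only situation of interest.

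There is no real obstacle here. The only nontrivial input beyond elementary group theory is Lemma \ref{phic}, which supplies the single closure fact needed for transitivity (a product of two balanced colorings is central); everything else is the observation that $\Phi^b$ is a $\Phi^c$-torsor because it is a coset of $\Phi^c$ in $\Phi$. If anything needs care it is merely being explicit that $\Phi$ is $2$-torsion, so that inverses can be dispensed with throughout.
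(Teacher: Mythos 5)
Your proof is correct and the core computations match the paper's exactly: well-definedness via the four-edge product over a diamond, and transitivity by setting $c=b_1b_2$, invoking Lemma \ref{phic}, and using that every element of $\Phi$ squares to the identity. You are in fact slightly more careful than the paper, whose proof only spells out well-definedness and transitivity and leaves freeness implicit in the remark that $c^2=1_\Phi$ for all $c\in\Phi$; you make the deduction $cb=b\Rightarrow c=1_\Phi$ explicit, which is worth doing. Your opening observation that $\Phi^c=\ker\delta$ and $\Phi^b=\delta^{-1}(\phi_0)$ is a tidy conceptual reframing connecting back to Proposition \ref{weakcolorable}: it exhibits $\Phi^b$ as a coset of the subgroup $\Phi^c$ and so explains in one stroke why the action must be simply transitive. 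That packaging is a genuine (if small) improvement in clarity, though it does not change the underlying diamond-by-diamond computations, which are the same as the paper's.
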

\begin{proof}
    Given $c\in \Phi^c$ and $d\in\Phi^b$, since $c$ has product 1 over any diamond, and $d$ has product -1 over any diamond, $cd$ has product -1 over any diamond, so $cd\in\Phi^b$.
    Notice that for any $c\in\Phi$, $c^2=1_{\Phi}$. Let $d=c_1c_2$ (an element of $\Phi^c$ by Lemma \ref{phic}). Then $c_1d=c_1c_1c_2=1_\Phi c_2=c_2$.
\end{proof}

\begin{remark}
    In \cite[Section 5]{marietti2007}, Marietti constructs operators $\Phi_S$ which play a role analogous to that played by central colorings in this paper, and a weaker version of Lemma \ref{phicaction} appears in his setting as \cite[Corollary 6.3]{marietti2007}.
\end{remark}

\begin{corollary}
    Let $P$ be a balanced colorable thin poset with balanced coloring $c$. Then multiplication by $c$ gives a bijection between $\Phi^b$ and $\Phi^c$. In particular, $|\Phi^b|=|\Phi^c|$.
\end{corollary}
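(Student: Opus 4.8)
The plan is to exhibit the map $\mu_c\colon \Phi^b\to\Phi^c$ given by $d\mapsto cd$ and show it is a bijection by producing an explicit two-sided inverse. Everything follows formally from Lemma \ref{phic}, Lemma \ref{phicaction}, and the elementary observation that every element of $\Phi$ is its own inverse (since $\Phi$ is a pointwise product of copies of $\Z_2$, so $f^2=1_\Phi$ for all $f\in\Phi$).

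First I would check $\mu_c$ is well defined: given $d\in\Phi^b$, both $c$ and $d$ lie in $\Phi^b$, so by Lemma \ref{phic} the product $cd$ lies in $\Phi^c$. Next I would define $\nu_c\colon\Phi^c\to\Phi^b$ by $e\mapsto ce$; this is well defined by the first sentence of the proof of Lemma \ref{phicaction} (the product of $c\in\Phi^b$ with an element of $\Phi^c$ has product $-1$ over every diamond, hence lies in $\Phi^b$). Then $\nu_c\circ\mu_c(d)=c(cd)=c^2d=1_\Phi d=d$ and $\mu_c\circ\nu_c(e)=c(ce)=c^2e=e$, using associativity and commutativity of $\Phi$ together with $c^2=1_\Phi$. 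Hence $\mu_c$ and $\nu_c$ are mutually inverse bijections, giving the stated bijection $\Phi^b\leftrightarrow\Phi^c$.

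Finally, since $P$ is finite, $C(P)$ is finite, so $\Phi$ and hence $\Phi^b$ and $\Phi^c$ are all finite; a bijection between finite sets forces $|\Phi^b|=|\Phi^c|$. I do not anticipate any genuine obstacle here — the only thing to be careful about is distinguishing $\mu_c$ (well-definedness needs Lemma \ref{phic}) from $\nu_c$ (well-definedness needs the complementary computation from Lemma \ref{phicaction}), and making sure both directions are recorded rather than only asserting surjectivity or injectivity. One could alternatively phrase this as an immediate consequence of Lemma \ref{phicaction}: since $\Phi^c$ acts freely and transitively on $\Phi^b$, fixing the basepoint $c\in\Phi^b$ gives the orbit map $e\mapsto ec$ as a bijection $\Phi^c\to\Phi^b$, and this orbit map is precisely $\nu_c$; but I prefer the self-contained computation above since it also makes the ``in particular'' count transparent.
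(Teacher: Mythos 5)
Your proof is correct and follows essentially the same route as the paper: the paper's one-line proof invokes Lemma \ref{phic} for well-definedness of $d\mapsto cd$ on $\Phi^b\to\Phi^c$ and Lemma \ref{phicaction} for well-definedness of the inverse $e\mapsto ce$ on $\Phi^c\to\Phi^b$, with $c^2=1_\Phi$ left implicit; you have simply spelled out the two-sided inverse check. The one small thing worth being aware of is that in your appeal to ``the first sentence of the proof of Lemma \ref{phicaction}'' the roles of the balanced and central coloring are swapped relative to that lemma's notation, but the sign computation is symmetric so nothing actually breaks.
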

\begin{proof}
The map is well defined by Lemma \ref{phic}, and the inverse (also multiplication by $c$) is well defined by Lemma \ref{phicaction}.
\end{proof}

%\ACcom{what does Lemma \ref{phic} say about this action? We need to make more use of this action at least as an organizational way to frame results. There needs to be a better way to summarize all of the very nice properties of this action, bijection, everything... it is just so nice... lets learn more about group actions!}

%This homological interpretation of balanced colorings is utilized in Section \ref{functorsonposets}, where we use balanced colorings to construct homology theories from thin posets. 

\begin{lemma}
If $P$ is a diamond transitive thin poset with $\hat{0}$, then for any $c\in \Phi^c$ there exists $f:P\to\{1,-1\}$ such that $\delta f=c$. 
\label{centralimage}
\end{lemma}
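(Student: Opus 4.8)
The plan is to give a direct combinatorial argument exploiting diamond transitivity, defining $f$ by transporting the value $f(\hat{0})=1$ along saturated chains and checking the outcome does not depend on the chain chosen. First I would unwind the meaning of $\delta f$: writing the edge of $X(P)$ corresponding to a cover relation $x\lessdot y$ as $(x,y)$, one has $\delta f(x,y)=f(x)f(y)$ in the multiplicative notation $\Z_2=\{1,-1\}$, so the claim asks for $f:P\to\{1,-1\}$ with $f(x)f(y)=c(x,y)$ on every cover relation, i.e.\ that $c$ is a coboundary. Note also that a central coloring is precisely a $1$-cocycle: $\delta c$ assigns to each diamond the product of $c$ over its four edges, which equals $1$ by definition of centrality.

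Next, the construction. Set $f(\hat{0})=1$, and for $x\in P$ choose a saturated chain $C\colon \hat{0}=z_0\lessdot z_1\lessdot\cdots\lessdot z_k=x$ (one exists since $P$ has $\hat{0}$ and is graded) and put $f(x)=\prod_{i=1}^{k}c(z_{i-1},z_i)$. To see this is well defined, let $C,C'$ be two saturated chains from $\hat{0}$ to $x$; these are two maximal chains of the interval $[\hat{0},x]$, so by diamond transitivity they are related by a finite sequence of diamond moves. It therefore suffices to check that the product of $c$ along a saturated chain is unchanged by a single diamond move. A diamond move either fixes the chain, or replaces within it the two edges on one side of some diamond $d$ by the two edges on the other side, leaving all remaining edges untouched; since $c$ is central, the product of $c$ over the four edges of $d$ is $1$, and since every element of $\Z_2$ is its own inverse, the product over the two removed edges equals the product over the two new edges. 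Hence the chain product is invariant, so $f$ is well defined.

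Finally I would verify $\delta f=c$. Given a cover relation $x\lessdot y$, pick a saturated chain $C$ from $\hat{0}$ to $x$ and extend it to the saturated chain $C\cup\{y\}$ from $\hat{0}$ to $y$; computing $f(y)$ along this extended chain gives $f(y)=f(x)\,c(x,y)$, whence $f(x)f(y)=f(x)^2 c(x,y)=c(x,y)=\delta f(x,y)$, as required.

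The only step with real content is the well-definedness of $f$, which is exactly where diamond transitivity is used; everything else is bookkeeping. An alternative, less explicit, route is purely topological: by Theorem \ref{dthomology} the space $X(P)$ is simply connected, hence $H_1(X(P);\Z)=0$, so by the universal coefficient theorem $H^1(X(P);\Z_2)=0$, and therefore every $1$-cocycle---in particular the central coloring $c$---is a coboundary $\delta f$. I would include the direct argument, since it produces $f$ explicitly and keeps the section self-contained.
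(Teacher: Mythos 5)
Your argument is correct, but it takes a different route from the paper's proof of this particular lemma. The paper proves Lemma~\ref{centralimage} in one line using the topological machinery already in hand: diamond transitivity plus the existence of $\hat{0}$ gives (via Theorem~\ref{dthomology}) that $\pi_1(X(P),\hat{0})$ is trivial, hence $H^1(X(P),\Z_2)=0$, and since a central coloring is exactly a $\Z_2$-valued $1$-cocycle on $X(P)$, it must lie in $\im\delta^0$. This is precisely the argument you sketch at the end as the ``less explicit route.'' Your primary argument --- defining $f$ by transporting $f(\hat{0})=1$ along saturated chains and using diamond transitivity plus centrality to check chain-independence --- is a correct direct construction, and in fact it is essentially the content the paper develops immediately afterward: the greedy algorithm of Definition~\ref{greedydefn}, the coherence-preservation Lemma~\ref{coherence}, and Proposition~\ref{greedyprop}. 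In other words, you have anticipated the explicit construction the author reserves for the subsequent results rather than reproving the lemma as stated. Both approaches are valid; the topological one is shorter given that Theorem~\ref{dthomology} is already established, while your direct argument produces the preimage explicitly and does not depend on any homotopy-theoretic input, which is useful since the paper in any case needs the explicit $\gr^c$ later (e.g.\ in Theorem~\ref{balindept} and Propositions~\ref{greedyprod},~\ref{greedyinv}).
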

\begin{proof}
Since $P$ is diamond transitive with $\hat{0}$, it follows from Theorem \ref{dthomology} that we have $H^1(X(P),\Z_2)=0$. Since $c\in\Phi^c$, we have $c\in\ker\delta^1=\im\delta^0$. 
\end{proof}

In fact we can do a bit better than the previous Lemma and actually assign an explicit function $f$ so that $\delta f=c$. It turns out that the so-called `greedy algorithm' (Definition \ref{greedydefn}) does the trick. 

\begin{defn}
Let $P$ be a thin poset, and $c\in\Phi^c$. A \textit{partial coloring} of $P$ is a function $f:S_f\to\{1,-1\}$ where $S_f\subseteq P$.
Say that an edge $z\lessdot y$ in $S_f$ is \textit{coherent} with respect to the pair $(f,c)$ if $c(z\lessdot y)=f(z)f(y)$. Say that a saturated chain from $\hat{0}$ to an element $x$ in $P$ is \textit{coherent} with respect to $(f,c)$ if each cover relation on the chain is coherent with respect to $(f,c)$. Call a partial coloring $f$ of $P$ \textit{allowable} (with respect to $c$) each cover relation in $S_f$ is coherent with respect to $(f,c)$.
\end{defn}

\begin{defn}\label{greedydefn}
Fix a linear ordering on each rank of the poset $P$. We will call this collection of linear orderings $\mc{O}$. Given $c\in\Phi^c_P$ we define a function $\gr_\mc{O}^c:P\to\{1,-1\}$ as follows (we will often simply write $\gr^c$ and omit the $\mc{O}$ but keep in mind that the function could depend on $\mc{O}$).  
\begin{enumerate}
    \item Assign all elements of rank $i=0$ with the value $1$. 
    \item Once all elements of rank $i$ are colored, we proceed to color elements of rank $i+1$ as follows. Once the first $j$ elements (with respect to $\mc{O}$) of rank $i+1$ are assigned colors, the $(j+1)$st element is assigned the color $1$ if this produces an allowable partial coloring and otherwise we assign the color $-1$. 
\end{enumerate}
\end{defn}

To show that the above process gives a well defined function $\gr_\mc{O}^c$ with $\delta(\gr_\mc{O}^c)=c$, it remains only to show that in the case that assigning the color $1$ to the $(j+1)$st element $u$ of the $i$th rank does not assign an allowable partial coloring, that assigning the value $-1$ does give an allowable partial coloring. Let us denote by $f$ this partial coloring, assigning the value $-1$ to $u$, and we assume inductively that if we erase the color from $u$, we have an allowable partial coloring. The following lemma will prove useful in this regard.

\begin{lemma}
Under the inductive assumption of the previous paragraph, if $d$ is a diamond in $S_f$, and if $C$ is a saturated chain in $S_f$ from $\hat{0}$ to  $u$, coherent with respect to $(f,c)$, then $dC$ is coherent with respect to $(f,c)$. 
\label{coherence}
\end{lemma}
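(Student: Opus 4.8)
The plan is to localize the problem to the one or two cover relations at which $C$ and $dC$ differ, and to handle separately the case where the top of the diamond is the freshly coloured element $u$. First I would dispose of the trivial case: if the diamond move is trivial then $dC=C$ and there is nothing to prove, so assume $d=\{x,a,b,y\}$ (with $x\lessdot a\lessdot y$ and $x\lessdot b\lessdot y$) acts nontrivially on $C$; by the symmetry interchanging $a$ and $b$ we may assume $x\lessdot a\lessdot y$ is a subchain of $C$, so that $dC=(C\setminus\{a\})\cup\{b\}$. Every cover relation of $dC$ other than $x\lessdot b$ and $b\lessdot y$ is already a cover relation of $C$, hence coherent because $C$ is coherent; so the lemma reduces to checking that $x\lessdot b$ and $b\lessdot y$ are coherent with respect to $(f,c)$.

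Next I would split on the position of $u$ in the diamond. Since $y$ lies on $C$ and $u$ is the top of $C$, we have $y\le u$, hence $\rk(y)\le\rk(u)$. If $y\ne u$, then $y<u$ forces $\rk(y)<\rk(u)$, so $x,a,b,y$ all have rank strictly below $\rk(u)$ and thus all four lie in $S_f\setminus\{u\}$; then $x\lessdot b$ and $b\lessdot y$ are cover relations with both endpoints in $S_f\setminus\{u\}$, and the inductive assumption — that erasing the colour of $u$ leaves an allowable partial colouring — already makes them coherent, finishing this case. If $y=u$, then $x,a,b$ still have rank below $\rk(u)$, so $x\lessdot b$ is coherent by the same inductive assumption, and only the edge $b\lessdot u$ remains.

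This last edge $b\lessdot u$ is where I expect the real work to be: it touches $u$, so the inductive assumption says nothing about it, and one must instead exploit that $c$ is central. Centrality means the diamond $d$ carries an even number of $-1$-coloured edges, i.e. $c(x\lessdot a)\,c(a\lessdot u)\,c(x\lessdot b)\,c(b\lessdot u)=1$. Combining this with the three coherences already in hand — namely $c(x\lessdot a)=f(x)f(a)$ and $c(a\lessdot u)=f(a)f(u)$ from coherence of $C$, and $c(x\lessdot b)=f(x)f(b)$ from the inductive assumption — and cancelling the squares $f(x)^2=f(a)^2=1$ yields $c(b\lessdot u)=f(b)f(u)$, which is precisely coherence of $b\lessdot u$; hence $dC$ is coherent. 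Everything besides this one identity is routine bookkeeping about which vertices of the diamond avoid $u$; the single load-bearing idea is that centrality of the diamond supplies exactly the relation the inductive hypothesis cannot provide at the newly coloured element.
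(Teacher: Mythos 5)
Your proof is correct and takes essentially the same approach as the paper's: split on whether the top of the diamond is $u$, dispatch the non-$u$ case via the inductive assumption, and use centrality of $c$ (the product of the four edge colors equals $1$) together with the three already-known coherences to deduce coherence of the one remaining edge $b\lessdot u$. The only difference is that you are somewhat more explicit about which edges of $dC$ actually need to be re-examined and why the ranks force the split to go as it does; the paper compresses this into a single sentence and a figure, but the underlying argument is identical.
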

\begin{proof}
If the top element of $d$ has rank $\leq i$, then all edges of $d$ are coherent with respect to $(f,c)$ by definition, so the only nontrivial case is when $u$ is the top element of $d$. In this case, we have a diamond of the form shown in Figure \ref{diamondgreedy}. 
    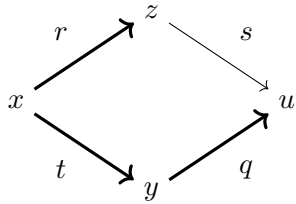
\begin{figure}
        \centering
        \begin{tikzpicture}
            \node at (0,0){\usebox\diamondgreedy};
        \end{tikzpicture}
        \label{diamondgreedy}
        \caption{The only case of interest in the proof of Lemma \ref{coherence} to show that diamond moves preserve coherence of saturated chains. The element $u$ is at rank $i+1$ and the thickened edges are coherent by assumption.}
    \end{figure}
The three edges shown as thickened are coherent with respect to $(f,c)$ by assumption, so it suffices to show that the fourth edge is also coherent with respect to $(f,c)$. But since we have $t=f(x)f(y)$, $q=f(y)f(u)$, $r=f(x)f(z)$, and $rstq=1$, it follows that $s=rtq=f(z)f(u)$.
\end{proof}

Since assigning the value $1$ to $u$ does not yield an allowable partial coloring, there is some cover relation $y\lessdot u$ for which $-f(y)= c(y\lessdot u)$, or in other words, $(y\lessdot u)$ is coherent with respect to $(f,c)$. To show that $f$ is an allowable partial coloring with respect to $c$, we must show that any other edge of the form $z\lessdot u$ is coherent with respect to $(f,c)$. Consider any element $z$ in rank $i$, such that there is a cover relation $z\lessdot u$. In the case that $P$ is a diamond transitive thin poset with $\hat{0}$, there exist saturated chains $C_1$ and $C_2$ containing the edges $y\lessdot u$ and $z\lessdot u$ respectively, both with minimum $\hat{0}$ and maximum $u$. By diamond transitivity there exist a sequence of diamond moves connecting $C_1$ and $C_2$, and by assumption, $C_1$ is coherent with respect to $(f,c)$. Using Lemma \ref{coherence} iteratively, we obtain the following.

\begin{prop}\label{greedyprop}
Let $P$ be a diamond transitive thin poset with $\hat{0}$, and $c\in\Phi^c$. The function $\gr_\mc{O}^c$ satisfies $\delta(\gr_\mc{O}^d)=c$.
\end{prop}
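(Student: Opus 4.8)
The plan is to prove, by a double induction (outer induction on the rank $i$, inner induction on the position of an element within its rank under $\mc{O}$), that at every stage of the greedy procedure the partial coloring built so far is allowable and every element processed so far has received a well-defined color. Granting this, once $i$ exceeds $\rk(P)$ the procedure has assigned a color $\gr_\mc{O}^c(x)$ to every $x\in P$, and the resulting total coloring $f=\gr_\mc{O}^c$ is allowable; allowability says precisely that $f(z)f(y)=c(z\lessdot y)$ for every cover relation $z\lessdot y$, which is the statement $\delta f=c$.

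For the base case, the only rank-$0$ element $\hat{0}$ is assigned the color $1$ and $S_f$ contains no cover relations, so the partial coloring is vacuously allowable. For the inductive step, suppose all elements of rank $\le i$ and the first $j$ elements of rank $i+1$ have been colored, forming an allowable partial coloring, and let $u$ be the $(j+1)$st element of rank $i+1$. If assigning $u$ the color $1$ keeps the partial coloring allowable, we are done; otherwise the procedure sets $f(u)=-1$, and we must verify that this gives an allowable partial coloring. Every cover relation not involving $u$ was already coherent, so it suffices to show that every cover relation $z\lessdot u$ is coherent with respect to $(f,c)$. Since assigning $1$ to $u$ failed, there is at least one cover relation $y\lessdot u$ which becomes coherent once $f(u)=-1$, namely one with $c(y\lessdot u)=-f(y)=f(y)f(u)$.

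Now fix an arbitrary $z$ of rank $i$ with $z\lessdot u$. Because $P$ is diamond transitive and has $\hat{0}$, choose saturated chains $C_1$ ending in $y\lessdot u$ and $C_2$ ending in $z\lessdot u$, both from $\hat{0}$ to $u$; diamond transitivity supplies a sequence of diamond moves carrying $C_1$ to $C_2$. Each such move only reroutes a chain from $\hat{0}$ to $u$ through an element of rank $\le i$, and every element of rank $\le i$ already lies in $S_f$, so every chain in this sequence lies in $S_f$ and every diamond used lies in $S_f$. The chain $C_1$ is coherent: its cover relations of rank $\le i$ are coherent because the partial coloring obtained by erasing the color of $u$ is allowable, and its top edge $y\lessdot u$ is coherent by the previous paragraph. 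Applying Lemma \ref{coherence} once for each diamond move propagates coherence along the sequence, so $C_2$ is coherent; in particular its top edge $z\lessdot u$ is coherent. As $z$ was arbitrary, assigning $f(u)=-1$ yields an allowable partial coloring, completing the induction.

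The only genuine content is the inductive step, and within it the one delicate point is the invocation of Lemma \ref{coherence}: coherence of saturated chains into $u$ is stable under diamond moves, so diamond transitivity lets us transport it from the single chain $C_1$ we know to be coherent to every other chain ending at $u$. Everything else — well-definedness (each element gets exactly one color by construction), the bookkeeping that diamond moves keep chains and diamonds inside $S_f$, and reading off $\delta f = c$ from allowability of the final coloring — is routine once this is in place.
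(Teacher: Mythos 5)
Your proof is correct and follows essentially the same route as the paper: reduce to the inductive step where assigning $1$ to $u$ fails, locate the coherent edge $y\lessdot u$ forced by $f(u)=-1$, and then use diamond transitivity together with Lemma \ref{coherence} to propagate coherence from a chain through $y$ to a chain through any other $z\lessdot u$. You have simply packaged the paper's preceding discussion into a more explicit double induction and spelled out the bookkeeping (that all intermediate elements and diamonds stay inside $S_f$), which is a reasonable clarification but not a different argument.
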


Not only do we now have a canonical preimage of a given central coloring under the differential $\delta$, but the map $\gr^c$ has some very useful properties.

\begin{lemma}
The map $\gr^c_\mc{O}$ depends only on $c$ and the choice $\gr^c(\hat{0})=1$, and is therefore independent of the ordering $\mc{O}$.
\label{independent}
\end{lemma}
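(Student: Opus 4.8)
The plan is to deduce the statement directly from Proposition~\ref{greedyprop}, which guarantees $\delta(\gr^c_\mc{O}) = c$ no matter which collection of orderings $\mc{O}$ is chosen. The heart of the matter is a uniqueness principle: on a connected poset with $\hat 0$, a function $f:P\to\{1,-1\}$ is completely determined by its coboundary $\delta f$ together with its value at $\hat 0$.

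First I would record that $P$ is connected: since $P$ has a minimum $\hat 0$ and is graded, every element $x$ lies on a saturated chain from $\hat 0$ to $x$, so the Hasse diagram $H(P)$ is connected. Now fix two collections of linear orderings $\mc{O}$ and $\mc{O}'$, put $f = \gr^c_\mc{O}$ and $g = \gr^c_{\mc{O}'}$, and let $h:P\to\{1,-1\}$ be the function $h(x) = f(x)g(x)$ (recall $\Z_2$ is written multiplicatively as $\{1,-1\}$). For any cover relation $z\lessdot y$, Proposition~\ref{greedyprop} gives
\[
h(z)h(y) = \bigl(f(z)f(y)\bigr)\bigl(g(z)g(y)\bigr) = c(z\lessdot y)\,c(z\lessdot y) = 1,
\]
so $h(z) = h(y)$. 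Hence $h$ is constant along every edge of the connected graph $H(P)$, i.e.\ $h$ is globally constant, and since the greedy algorithm assigns $\hat 0$ the value $1$ for every ordering we get $h(\hat 0) = f(\hat 0)g(\hat 0) = 1$. Therefore $h\equiv 1$ and $f = g$, proving $\gr^c_\mc{O}$ is independent of $\mc{O}$.

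Running the same computation with $g$ taken to be an arbitrary function satisfying $\delta g = c$ and $g(\hat 0) = 1$ shows more: $\gr^c$ is the unique such function, which is precisely the assertion that it depends only on $c$ and the choice $\gr^c(\hat 0) = 1$. I do not anticipate any genuine obstacle beyond invoking Proposition~\ref{greedyprop} (which is where diamond transitivity does its real work); the only points demanding a little care are the connectedness of $P$ and consistently using the multiplicative convention for $\Z_2$.
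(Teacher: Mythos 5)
Your proof is correct and rests on the same underlying mechanism as the paper's: the value of $\gr^c$ at any element is forced by its value at $\hat 0$ and by $c$, because the defining relation $\gr^c(y) = \gr^c(x)\,c(x\lessdot y)$ propagates the value along cover relations. The paper packages this as an induction on rank, whereas you phrase it as a uniqueness principle: any two functions $f,g$ with $\delta f = \delta g = c$ and $f(\hat 0) = g(\hat 0) = 1$ have product $fg$ locally constant on the connected graph $H(P)$, hence globally $1$. Your version has the small advantage of directly yielding Corollary~\ref{uniqueness} (which the paper states after the lemma without proof), and of making the role of connectedness explicit. In exchange, you invoke Proposition~\ref{greedyprop} twice (once for each ordering), whereas the paper reads the identity $\gr^c(y) = \gr^c(x)\,c(x\lessdot y)$ off the greedy construction directly. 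Both are valid; the difference is organizational rather than substantive.
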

\begin{proof}
We prove this by induction on rank. At rank 0, we have $\gr^c(\hat{0})=1$. Suppose $\rk(y)>0$. Then by construction, $\gr^c(y)=\gr^c(x)c(x\lessdot y)$ for any $x\lessdot y$. By our inductive assumption, $\gr^c(x)$ depends only on $c$ and the choice $\gr^c(\hat{0})=1$, as desired. 
\end{proof}

\begin{remark}
We will henceforth omit any mention of an ordering $\mc{O}$. From Lemma \ref{independent}, we can infer that there are two possible preimages of a given central coloring $c$ under the differential $\delta$, one given by the choice $\gr^c(\hat{0})=1$ and the other, which we might denote $\bar{\gr}^c$ determined by the choice $\bar{\gr}^c(\hat{0})=-1$. These are related of course by $\bar{\gr}^c=-{\gr}^c$.
\end{remark}

\begin{corollary}
    \label{uniqueness}
    Suppose $P$ is a diamond transitive thin poset with $\hat{0}$ and let $f:P\to\{1,-1\}$. If $f$ satisfies $f(\hat{0})=1$ and $\delta(f)=c$, then $f=\gr^c.$
\end{corollary}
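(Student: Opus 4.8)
The plan is to prove the equality $f = \gr^c$ by induction on the rank of elements of $P$. For the base case, since $P$ has a unique minimal element $\hat 0$ and is graded with $\rk(\hat 0)=0$, the only rank-$0$ element is $\hat 0$, and by hypothesis $f(\hat 0)=1=\gr^c(\hat 0)$. For the inductive step, fix $y\in P$ with $\rk(y)=n>0$ and assume $f(x)=\gr^c(x)$ whenever $\rk(x)<n$. Because $P$ is graded with $\hat 0$ and $y\neq\hat 0$, there is a saturated chain from $\hat 0$ to $y$, so $y$ covers some element $x$, necessarily of rank $n-1$. Since $\delta(f)=c$ we have $f(x)f(y)=c(x\lessdot y)$, and as $f$ is $\{1,-1\}$-valued this rearranges to $f(y)=f(x)\,c(x\lessdot y)$. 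On the other hand, by the construction of the greedy function — equivalently, from $\delta(\gr^c)=c$, established in Proposition \ref{greedyprop}, and as used in the proof of Lemma \ref{independent} — we have $\gr^c(y)=\gr^c(x)\,c(x\lessdot y)$. Combining these with the inductive hypothesis $f(x)=\gr^c(x)$ gives $f(y)=\gr^c(y)$, which closes the induction.

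I do not expect any genuine obstacle here; the one point worth stating carefully is that the recursion $\gr^c(y)=\gr^c(x)\,c(x\lessdot y)$ must be invoked for the cover relation $x\lessdot y$ we happen to choose under $y$, not only for the cover used by the greedy procedure itself. This is exactly what $\delta(\gr^c)=c$ provides, so Proposition \ref{greedyprop} can simply be cited, and diamond transitivity enters only through the fact that it makes $\gr^c$ well defined with $\delta(\gr^c)=c$ in the first place.

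As an alternative formulation worth recording, set $g=f\cdot\gr^c$ (pointwise product in $\Phi_P$ of functions on $P$, or rather their restriction to vertices). Then $g(\hat 0)=1$, and $\delta g=\delta(f)\cdot\delta(\gr^c)=c\cdot c=1_\Phi$ since every element of $\Phi$ is an involution. Hence $g$ lies in $\ker\delta^0=H^0(X(P),\Z_2)$, which consists of the constant colorings because the Hasse diagram of $P$, hence $X(P)$, is connected. Therefore $g\equiv 1$, and since $\gr^c$ is $\{1,-1\}$-valued this yields $f=\gr^c$. This second route uses only connectedness of $P$, not diamond transitivity, but the inductive argument above is the cleaner one to present.
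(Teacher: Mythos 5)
Your main argument is correct and is essentially the paper's intended proof: the corollary is stated without proof immediately after Lemma \ref{independent}, whose induction-on-rank argument (using the recursion $\gr^c(y)=\gr^c(x)c(x\lessdot y)$ furnished by $\delta(\gr^c)=c$) applies verbatim to any $f$ with $f(\hat 0)=1$ and $\delta(f)=c$. Your alternative route via $g=f\cdot\gr^c\in\ker\delta^0$ and connectedness of the Hasse diagram is a nice cohomological reformulation of the same fact, but the induction is indeed the one that fits the paper's exposition.
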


\begin{prop}
    \label{greedyprod}
    Let $P$ be a diamond transitive thin poset with $\hat{0}$. Given $b,b^\prime\in\Phi_P^c$, we have $\gr^{bb^\prime}=\gr^b\gr^{b^\prime}.$
    \end{prop}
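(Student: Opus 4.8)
The plan is to reduce this to the uniqueness statement \textup{(}Corollary \ref{uniqueness}\textup{)}, after which it becomes a one-line computation with $\delta$. First I would set $f=\gr^b\gr^{b^\prime}$, the pointwise product of the two functions $P\to\{1,-1\}$. Since $\gr^b(\hat{0})=\gr^{b^\prime}(\hat{0})=1$ by construction of the greedy coloring, we immediately get $f(\hat{0})=1$.

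Next I would compute $\delta f$. For any cover relation $x\lessdot y$ we have $\delta f(x\lessdot y)=f(x)f(y)=\big(\gr^b(x)\gr^b(y)\big)\big(\gr^{b^\prime}(x)\gr^{b^\prime}(y)\big)=\delta(\gr^b)(x\lessdot y)\,\delta(\gr^{b^\prime})(x\lessdot y)$, which by Proposition \ref{greedyprop} equals $b(x\lessdot y)\,b^\prime(x\lessdot y)=(bb^\prime)(x\lessdot y)$. Hence $\delta f=bb^\prime$. I would also note, as in Lemma \ref{phic}, that the product of two central colorings is central, so $bb^\prime\in\Phi^c$ and $\gr^{bb^\prime}$ is defined.

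Finally, $P$ is diamond transitive with $\hat{0}$, $f(\hat{0})=1$, and $\delta f=bb^\prime$, so Corollary \ref{uniqueness} forces $f=\gr^{bb^\prime}$; that is, $\gr^b\gr^{b^\prime}=\gr^{bb^\prime}$, as claimed.

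There is no real obstacle here: the content is entirely packaged in Corollary \ref{uniqueness}. If one preferred to avoid that corollary, the only alternative requiring any care is a direct induction on rank using Lemma \ref{independent}, writing $\gr^{bb^\prime}(y)=\gr^{bb^\prime}(x)(bb^\prime)(x\lessdot y)$ for a chosen $x\lessdot y$ and matching it against $\gr^b(y)\gr^{b^\prime}(y)=\gr^b(x)\gr^{b^\prime}(x)b(x\lessdot y)b^\prime(x\lessdot y)$ via the inductive hypothesis, with base case $\hat{0}\mapsto 1$; even there the only subtlety, well-definedness of the choice of $x$, is already handled by Lemma \ref{independent}.
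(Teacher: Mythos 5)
Your proof is correct and follows the same route as the paper: reduce to Corollary~\ref{uniqueness} and verify $\delta(\gr^b\gr^{b^\prime})=bb^\prime$ by the same one-line computation. You are slightly more careful than the paper in explicitly recording that $\gr^b\gr^{b^\prime}(\hat{0})=1$ (which Corollary~\ref{uniqueness} does require) and that $bb^\prime\in\Phi^c$ so that $\gr^{bb^\prime}$ is even defined; these are correct and worth stating, though your pointer ``as in Lemma~\ref{phic}'' is a slight mismatch since that lemma concerns products of \emph{balanced} colorings, not central ones --- the closure of $\Phi^c$ under products is immediate from the same kind of calculation but is a separate (and even easier) fact.
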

    
\begin{proof}
    By uniqueness (that is, Corollary \ref{uniqueness}), it suffices to show that $\delta(\gr^b\gr^{b^\prime})=bb^\prime$. Given a cover relation $x\lessdot y$ in $P$, we have 
        \begin{align*}
            \delta(\gr^b\gr^{b^\prime})(x\lessdot y)
            &=(\gr^b\gr^{b^\prime})(x)(\gr^b\gr^{b^\prime})(y)\\
            &=\gr^b(x)\gr^b(y)\gr^{b^\prime}(x)\gr^{b^\prime}(y)\\
            &=\delta(\gr^b)(x\lessdot y)\delta(\gr^{b^\prime})(x\lessdot y)\\
            &=b(x\lessdot y) b^\prime(x\lessdot y)\\
            &=bb^\prime(x\lessdot y). \qedhere
        \end{align*}
\end{proof}

The following proposition involves concepts (Definition \ref{CPOE} and the discussion before Lemma \ref{colorops}) from Section \ref{categoryofpoco}, however the result feels more at home in this section.

\begin{prop}
    \label{greedyinv}
    Let $P,Q$ be a diamond transitive thin posets with $\hat{0}$, let $c$ be a central coloring of $Q$, and suppose $\phi:P\to Q$ is a cover preserving order embedding. Then
    \[\gr^{\phi^{-1}(c)}=\gr^c\circ\phi.\]
\end{prop}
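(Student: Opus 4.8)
The plan is to deduce the identity from the uniqueness statement Corollary~\ref{uniqueness}. Set $g=\gr^{c}\circ\phi\colon P\to\{1,-1\}$. Since $P$ is a diamond transitive thin poset with $\hat{0}$, Corollary~\ref{uniqueness} will give $g=\gr^{\phi^{-1}(c)}$ as soon as we verify (i) that $\phi^{-1}(c)$ is in fact a central coloring of $P$, so that the right-hand side is even defined; (ii) that $g(\hat{0}_P)=1$; and (iii) that $\delta g=\phi^{-1}(c)$. So the proof reduces to these three checks, after which Corollary~\ref{uniqueness} applied to $P$ with central coloring $\phi^{-1}(c)$ and function $g$ yields the result.

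For (i), I would argue that $\phi$ carries diamonds to diamonds. If $d=\{x,a,b,y\}$ is a diamond of $P$ with $x\lessdot a\lessdot y$ and $x\lessdot b\lessdot y$, then cover preservation gives $\phi(x)\lessdot\phi(a)\lessdot\phi(y)$ and $\phi(x)\lessdot\phi(b)\lessdot\phi(y)$ in $Q$, with $\phi(a)\ne\phi(b)$ by injectivity; hence $[\phi(x),\phi(y)]$ has length $2$ and so is a diamond of $Q$ (Definition~\ref{thindiamond}), necessarily equal to $\{\phi(x),\phi(a),\phi(b),\phi(y)\}$. Thus $\phi$ restricts to a bijection from the four cover relations of $d$ onto the four cover relations of a diamond of $Q$, so $\phi^{-1}(c)$ has exactly as many $-1$'s on $d$ as $c$ has on $[\phi(x),\phi(y)]$; since $c$ is central this number is even, and therefore $\phi^{-1}(c)\in\Phi_P^{c}$. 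For (ii), the key point is that $\phi$ preserves minimal elements, i.e.\ $\phi(\hat{0}_P)=\hat{0}_Q$ (this is part of the notion of morphism under consideration, Definition~\ref{CPOE}); then $g(\hat{0}_P)=\gr^{c}(\hat{0}_Q)=1$ by construction of the greedy coloring.

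For (iii), given any cover relation $x\lessdot y$ in $P$ one has $\phi(x)\lessdot\phi(y)$ in $Q$, and
\begin{align*}
(\delta g)(x\lessdot y)
  &= g(x)\,g(y)
   = \gr^{c}(\phi(x))\,\gr^{c}(\phi(y))\\
  &= (\delta\gr^{c})\bigl(\phi(x)\lessdot\phi(y)\bigr)
   = c\bigl(\phi(x)\lessdot\phi(y)\bigr)
   = (\phi^{-1}(c))(x\lessdot y),
\end{align*}
where the second equality on the bottom line is Proposition~\ref{greedyprop}. Hence $\delta g=\phi^{-1}(c)$, and Corollary~\ref{uniqueness} finishes the argument.

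The only genuinely delicate ingredient is the apparently trivial fact $\phi(\hat{0}_P)=\hat{0}_Q$ used in (ii): the greedy coloring is determined entirely by $c$ together with its value at the bottom element (Lemma~\ref{independent}), so without basepoint preservation one would only obtain $\gr^{c}\circ\phi=\gr^{c}(\phi(\hat{0}_P))\cdot\gr^{\phi^{-1}(c)}$, i.e.\ equality up to a global sign; everything else is a formal manipulation of $\delta$. As an alternative to invoking Corollary~\ref{uniqueness}, one can argue directly from Lemma~\ref{independent}: for $w\in P$ choose a saturated chain $\hat{0}_P=w_0\lessdot\cdots\lessdot w_m=w$; then $\gr^{\phi^{-1}(c)}(w)=\prod_{i=0}^{m-1}(\phi^{-1}(c))(w_i\lessdot w_{i+1})=\prod_{i=0}^{m-1}c(\phi(w_i)\lessdot\phi(w_{i+1}))$, and since $\phi(\hat{0}_P)=\hat{0}_Q$ this is the product of the $c$-colors along a saturated chain from $\hat{0}_Q$ to $\phi(w)$, which equals $\gr^{c}(\phi(w))$.
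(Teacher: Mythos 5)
Your proof is correct and follows essentially the same strategy as the paper's: verify that $\gr^c\circ\phi$ satisfies the two hypotheses of Corollary~\ref{uniqueness} (value $1$ at $\hat{0}_P$ and coboundary equal to $\phi^{-1}(c)$), with the $\delta$-computation in step (iii) being word-for-word the paper's calculation. Your extra check (i) that $\phi^{-1}(c)\in\Phi_P^c$ is already recorded as item 8 of Lemma~\ref{colorops}, so it is a harmless redundancy rather than a different route.

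One small correction: you attribute the fact $\phi(\hat{0}_P)=\hat{0}_Q$ to Definition~\ref{CPOE}, but that definition only requires $\phi$ to be an order embedding that preserves covers; it says nothing about minimal elements (an embedding of $P$ as an upper interval of $Q$ would satisfy it without sending $\hat{0}_P$ to $\hat{0}_Q$). The paper's own proof silently makes the same assumption in the line $\gr^c\circ\phi(\hat{0}_P)=\gr^c(\hat{0}_Q)=1$, so you are in good company, and your observation that without this one only gets equality up to the global sign $\gr^c(\phi(\hat{0}_P))$ is exactly right and sharper than what the paper records. In practice the maps $\phi$ that arise in $\PoCo$ are applied to posets with $\hat 0$ and are expected to respect them, but this hypothesis ought to be made explicit rather than read out of Definition~\ref{CPOE}.
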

\begin{proof}
    Again by uniqueness, the following calculation suffices:
        \begin{align*}
            \delta(\gr^c\circ\phi)(x\lessdot y)
            &=\gr^c(\phi(x))\gr^c(\phi(y))\\
            &=c(\phi(x)\lessdot\phi(y))\\
            &=\phi^{-1}(c)(x\lessdot y).
        \end{align*}
    We also need to remark that $\gr^c\circ\phi(\hat{0}_P)=\gr^c(\hat{0}_Q)=1$, so by Corollary \ref{uniqueness} we get the desired result.
\end{proof}

\section{Obstructions to Diamond Transitivity}
\label{CWposets}

In this section, we begin by constructing a family of thin posets which are not diamond transitive. We then show that this construction, in a sense, provides the only possible type of obstruction to diamond transitivity. This gives us a classification of diamond transitive posets in terms of obstructions, and this classification allows us to prove that all CW posets are diamond transitive.

\begin{defn}
Given thin posets $P$ (with $\hat{0}_P$ and $\hat{1}_P$) and $Q$ (with $\hat{0}_Q$ and $\hat{1}_Q$) of the same rank $n\geq 3$, define the \textit{pinch product} $P\pinch Q$ by
$$P\pinch Q=(P-\{\hat{0}_P,\hat{1}_P\})\amalg (Q-\{\hat{0}_Q,\hat{1}_Q\})\amalg\{\hat{0},\hat{1}\}$$
where $\amalg$ denotes disjoint union. A partial order on $P\pinch Q$ is defined by setting $x\leq y$ if any of the following conditions hold:
\begin{enumerate}
    \item $x=\hat{0}$ or $y=\hat{1}$,
    \item $x,y\in P-\{\hat{0}_P,\hat{1}_P\}$ and $x\leq_P y$,
    \item $x,y\in Q-\{\hat{0}_Q,\hat{1}_Q\}$ and $x\leq_Q y$.
\end{enumerate}
  \end{defn}
 In words, the pinch product is formed by removing the $\hat{0}$ and $\hat{1}$ from each poset, taking the disjoint union, and then adding a new $\hat{0}$ and $\hat{1}$ to the result (or in other words, by identifying $\hat{0}_P\sim\hat{0}_Q$ and $\hat{1}_P\sim\hat{1}_Q$). 
 %In terms of Hasse diagrams, this can be thought of as identifying the $\hat{0}$'s and $\hat{1}$'s of the two posets. 
 \begin{lemma}
 Given thin posets $P$ and $Q$ of the same rank $n\geq 3$, $P\pinch Q$ is thin.
 \end{lemma}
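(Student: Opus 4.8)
The plan is to verify directly the two requirements in Definition~\ref{thindiamond}: that $P\pinch Q$ is graded, and that every closed nonempty interval of length $2$ has exactly four elements.

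First I would produce an explicit rank function. Since $\rk(P)=\rk(Q)=n$, set $\rk(\hat 0)=0$, $\rk(\hat 1)=n$, $\rk(x)=\rk_P(x)$ for $x\in P-\{\hat 0_P,\hat 1_P\}$, and $\rk(x)=\rk_Q(x)$ for $x\in Q-\{\hat 0_Q,\hat 1_Q\}$. The observation that makes this work is that the cover relations of $P\pinch Q$ come in exactly four kinds: (i) those lying entirely in the copy of $P-\{\hat 0_P,\hat 1_P\}$, which are precisely the relations $x\lessdot_P y$ between such elements (no new elements are inserted between them, because all $P$--$Q$ comparabilities are forced through $\hat 0$ or $\hat 1$); (ii) the analogous relations inside $Q$; (iii) $\hat 0\lessdot y$, which holds exactly when $y$ covers $\hat 0_P$ in $P$ or $\hat 0_Q$ in $Q$; and (iv) $x\lessdot\hat 1$, which holds exactly when $x$ is covered by $\hat 1_P$ or $\hat 1_Q$. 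Here $n\ge 3$ is used to guarantee intermediate-rank elements in each factor, which is what rules out $\hat 0\lessdot\hat 1$ and pins down (iii),(iv). Each of the four kinds increments $\rk$ by one.

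The heart of the argument is the interval count. Given a closed interval $[x,y]$ in $P\pinch Q$ with $\rk(y)-\rk(x)=2$, I would case on whether the endpoints are the new $\hat 0$ or $\hat 1$. Because comparabilities across the two factors pass only through $\hat 0$ and $\hat 1$, such an interval is always confined to one factor: if $x\ne\hat 0$ and $y\ne\hat 1$, then $x,y$ lie in a common factor, say $P$, and $[x,y]_{P\pinch Q}=[x,y]_P$, a diamond by thinness of $P$; if $x=\hat 0$, then $\rk(y)=2<n$ so $y$ lies in one factor, say $P$, and $[\hat 0,y]_{P\pinch Q}=\{\hat 0\}\sqcup\big([\hat 0_P,y]_P\setminus\{\hat 0_P\}\big)$, which has the same cardinality ($=4$) as the diamond $[\hat 0_P,y]_P$; the case $y=\hat 1$ is symmetric, using $\rk(x)=n-2\ge 1$. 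In every case $|[x,y]|=4$, so $P\pinch Q$ is thin.

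The only delicate point — and the closest thing to an obstacle — is the bookkeeping showing that a length-$2$ interval can neither straddle the two factors nor have its endpoints be exactly $\hat 0$ and $\hat 1$; this is precisely where $n\ge 3$ enters (it forces intermediate-rank elements to exist and keeps $n-2$ positive). Everything else is a routine unwinding of the definition of the order on $P\pinch Q$.
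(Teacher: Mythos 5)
Your proof is correct and takes essentially the same approach as the paper: exhibit the same rank function and then show every length-$2$ interval is (isomorphic to) a length-$2$ interval in one of the two factors. The paper states the second step in a single terse sentence, whereas you carefully justify it by classifying cover relations and handling the boundary cases $x=\hat 0$, $y=\hat 1$ explicitly, including noting where $n\ge 3$ is used; this is a good filling-in of details the paper leaves implicit.
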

 \begin{proof}
 The pinch product $P\pinch Q$ is graded with rank function
 \begin{equation*}
   \rk(x)=% 
   \begin{cases}
     0 &\text{if}\  x=\hat{0}, \\
     n &\text{if} \ x=\hat{1},\\
     \rk_P(x)&\text{if} \ x\in P-\{\hat{0}_P,\hat{1}_P\},\\
     \rk_Q(x)&\text{if} \ x\in Q-\{\hat{0}_Q,\hat{1}_Q\}.\\
   \end{cases}
\end{equation*}
Any interval of length two can be identified as either an interval of length two in $P$ or an interval of length two in $Q$ and is thus a diamond.
 \end{proof}
 
\begin{ex}
Let $P=\Br(S_3)\pinch \Br(S_3)$. The Hasse diagram of $P$ is shown below: 
    \begin{center}
        \begin{tikzpicture}
        \node[rotate=-90] at (0,0){\usebox\thincounter};
        \end{tikzpicture}
    \end{center}
%In this example, one can verify that $D(P)\cong S_4\times S_4$. 
Notice the chains colored green and red are in different orbits of $D(P)$, so $P$ is not diamond transitive. In fact, it is not hard to see that $P$ is the smallest possible non-diamond transitive thin poset. This poset also appears in \cite[Figure 2]{stanley1994survey} as an example of an Eulerian poset which is not a CW poset. It is not hard to see that $P$ is also the smallest possible thin poset with $\hat{0}$ which is not diamond transitive (it is the only such example among posets with 10 or less elements). 
\label{pinchex}
\end{ex}

\begin{lemma}
    Given thin posets $P,Q,$ with $\hat{0}$ and $\hat{1}$, of equal rank $n\geq 3$, there is a group isomorphism  $f: D(P)\times D(Q)\to D(P\pinch Q)$ such that the following diagram commutes:
        \begin{center}
            \begin{tikzcd}
                D(P)\times D(Q)\arrow[hookrightarrow, d]\arrow[r,"f"]
                &D(P\pinch Q)\arrow[hookrightarrow, d]\\
                S_{\mc{C}_P}\times S_{\mc{C}_Q}\arrow[hookrightarrow, r]
                &S_{\mc{C}}
            \end{tikzcd}
        \end{center}
    \label{comdiag}
\end{lemma}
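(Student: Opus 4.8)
The plan is to realize both $D(P)\times D(Q)$ and $D(P\pinch Q)$ as concrete permutation groups of saturated chains, using the faithfulness in Lemma \ref{faithful}, and to match them up there. Two set-level observations about $P\pinch Q$ set this up. First, by the rank function exhibited in the proof that $P\pinch Q$ is thin, every length-two interval of $P\pinch Q$ is a length-two interval of $P$ or of $Q$; hence its diamond set is $S_P\amalg S_Q$, so $W(P\pinch Q)$ is the group on generating set $S_P\amalg S_Q$ with relations $d^2=e$. Second, I would check that no element of $\bar P$ is comparable to an element of $\bar Q$, and that the cover relations and intervals of $P\pinch Q$ contained in $\bar P\cup\{\hat 0,\hat 1\}$ agree, after identifying $\hat 0_P=\hat 0$ and $\hat 1_P=\hat 1$, with those of $P$ (here $n\ge 3$ rules out $\hat 0\lessdot\hat 1$), and symmetrically for $Q$. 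Together these give a natural identification $\mc{C}=\mc{C}_P\cup\mc{C}_Q$ of the saturated chains of $P\pinch Q$, the two parts meeting only in the length-zero chains $\{\hat 0\}$ and $\{\hat 1\}$.

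Next I would analyze how diamonds act. A diamond $d\in S_P$ has both of its middle vertices in $\bar P$, so no side of $d$ is a subchain of a chain supported in $\bar Q\cup\{\hat 0,\hat 1\}$; hence $d$ fixes every $Q$-chain and acts on the $P$-chains exactly by the diamond move of $d$ inside $P$. Writing $\rho\colon W(P\pinch Q)\to S_{\mc C}$, $\rho_P\colon W(P)\to S_{\mc C_P}$, $\rho_Q\colon W(Q)\to S_{\mc C_Q}$ for the action homomorphisms, this says that for $d\in S_P$ the permutation $\rho(d)$ agrees with $\rho_P(d)$ on $\mc{C}_P$ and is the identity on $\mc{C}_Q$, and dually for $d\in S_Q$. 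In particular $\rho(d)$ and $\rho(d')$ commute whenever $d\in S_P$ and $d'\in S_Q$, so in $D(P\pinch Q)$ the images of $S_P$ and $S_Q$ commute elementwise.

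To assemble $f$, observe that every element of $\Sigma_P:=\rho_P(W(P))$ fixes $\{\hat 0\}$ and $\{\hat 1\}$, since diamond moves fix length-zero chains; likewise for $\Sigma_Q:=\rho_Q(W(Q))$. Therefore a pair $(\sigma,\tau)\in\Sigma_P\times\Sigma_Q$ glues to a well-defined permutation $\iota(\sigma,\tau)$ of $\mc{C}=\mc{C}_P\cup\mc{C}_Q$ (the two actions agreeing, trivially, on the overlap), giving an injective homomorphism $\iota\colon\Sigma_P\times\Sigma_Q\to S_{\mc C}$. The previous paragraph shows $\rho(d)=\iota(\rho_P(d),e)$ for $d\in S_P$ and $\rho(d)=\iota(e,\rho_Q(d))$ for $d\in S_Q$; since $S_P\amalg S_Q$ generates $W(P\pinch Q)$ while $\Sigma_P\times\{e\}$ and $\{e\}\times\Sigma_Q$ generate $\Sigma_P\times\Sigma_Q$, it follows that $\iota$ restricts to an isomorphism $\Sigma_P\times\Sigma_Q\xrightarrow{\ \sim\ }\rho(W(P\pinch Q))$. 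Composing with the identifications $D(P)\cong\Sigma_P$, $D(Q)\cong\Sigma_Q$, $D(P\pinch Q)\cong\rho(W(P\pinch Q))$ furnished by Lemma \ref{faithful} produces the isomorphism $f\colon D(P)\times D(Q)\xrightarrow{\ \sim\ }D(P\pinch Q)$, and the identity $\rho\circ f=\iota\circ(\rho_P\times\rho_Q)$ built into the construction is precisely the commutativity of the square, with bottom map $\iota$ (whose image lies in $S_{\mc C}$ because, on the image of $D(P)\times D(Q)$, both coordinates fix $\{\hat 0\}$ and $\{\hat 1\}$).

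I expect the only real friction to be bookkeeping: making the identification $\mc{C}=\mc{C}_P\cup\mc{C}_Q$ fully precise (inheritance of cover relations and intervals, and the two shared length-zero chains), and keeping track that the bottom arrow of the square is literally a homomorphism into $S_{\mc C}$ only after restricting to permutations fixing $\{\hat 0\}$ and $\{\hat 1\}$, which every relevant element does. The one genuinely substantive point — that the $S_P$-diamonds and $S_Q$-diamonds commute in $D(P\pinch Q)$ — falls out immediately once one notices that they act on complementary portions of $\mc{C}$.
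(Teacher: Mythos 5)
Your proof takes essentially the same route as the paper's: identify $S(P\pinch Q)=S_P\amalg S_Q$, observe that $P$-diamonds and $Q$-diamonds act on complementary sets of chains and hence commute in $D(P\pinch Q)$, and use the faithful action from Lemma \ref{faithful} to obtain the factorization. The main difference is presentational: the paper builds $f$ directly at the level of words, leaving injectivity and the small overlap of $\mc{C}_P$ and $\mc{C}_Q$ at the length-zero chains $\{\hat{0}\},\{\hat{1}\}$ implicit, whereas you realize all three groups concretely inside $S_{\mc{C}}$ and glue permutations, which makes those points explicit.
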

\begin{proof}
Let $S$ (resp. $S_P$, $S_Q$) be the set of diamonds in $P\pinch Q$ (resp. $P,Q$), and let $\mc{C}$ (resp. $\mc{C}_P$, $\mc{C}_Q$) be the set of saturated chains in $P\pinch Q$ (resp. $P,Q$). Then $S=S_P\amalg S_Q$ and $\mc{C}=\mc{C}_P\amalg\mc{C}_Q$. Furthermore notice that if $d\in S_P$ and $e\in S_Q$ that $de=ed$ in $D(P\pinch Q)$. Therefore we can factor $D(P\pinch Q)\cong D(P)\times D(Q)$ by writing any word $d_1\dots d_k\in D(P)$ in the form $(e_1\dots e_r)(f_{r+1}\dots f_k)$ where $e_i\in S_P$ and $f_j\in S_Q$ for all $i,j$. We define the map $f$ by sending $(e_1\dots e_r, f_1\dots f_s)\in D(P)\times D(Q)$ to $e_1\dots e_r f_1\dots f_s\in D(P\pinch Q)$. Commutativity of the above diagram follows from the group action axioms.
\end{proof}

\begin{lemma}
For any two thin posets $P,Q,$ with $\hat{0}$ and $\hat{1}$, of equal rank $n\geq 3$, $P\pinch Q$ is not diamond transitive. 
\end{lemma}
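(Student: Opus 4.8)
The plan is to exploit the product decomposition $D(P\pinch Q)\cong D(P)\times D(Q)$ established in Lemma \ref{comdiag}, together with the fact that this decomposition is compatible with the partition $\mc{C}=\mc{C}_P\amalg\mc{C}_Q$ of the saturated chains. First I would record that since $P$ (respectively $Q$) is graded of rank $n\geq 3$ with $\hat{0}_P$ and $\hat{1}_P$, it contains at least one saturated chain from bottom to top; under the identifications $\hat{0}_P\sim\hat{0}_Q=\hat{0}$ and $\hat{1}_P\sim\hat{1}_Q=\hat{1}$ these become saturated chains $C_P$ and $C_Q$ in the interval $[\hat{0},\hat{1}]$ of $P\pinch Q$, with $C_P\in\mc{C}_P$ and $C_Q\in\mc{C}_Q$. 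Thus $\mc{C}_{\hat{0},\hat{1}}$ is nonempty and meets both $\mc{C}_P$ and $\mc{C}_Q$.

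Next I would argue that no element of $D(P\pinch Q)$ can carry a chain of $\mc{C}_P$ to a chain of $\mc{C}_Q$. The quickest route is to invoke the commuting square of Lemma \ref{comdiag}: the action homomorphism $D(P\pinch Q)\hookrightarrow S_{\mc{C}}$ factors through $S_{\mc{C}_P}\times S_{\mc{C}_Q}$, so every $w\in D(P\pinch Q)$ permutes $\mc{C}_P$ and $\mc{C}_Q$ separately, whence the orbit of $C_P$ stays inside $\mc{C}_P$ and the orbit of $C_Q$ stays inside $\mc{C}_Q$. (If one prefers a direct argument: because $n\geq 3$, the two middle elements of any diamond of $P\pinch Q$ have rank strictly between $0$ and $n$ and hence lie in the interior of $P$ or of $Q$ — forcing the whole diamond to lie in $P$ or in $Q$, so $S=S_P\amalg S_Q$; a diamond move coming from $Q$ fixes any chain containing no $Q$-interior element, in particular every chain in $\mc{C}_P$, and a diamond move coming from $P$ replaces a middle element of a $\mc{C}_P$-chain by another $P$-interior element, so sends $\mc{C}_P$ to itself.)

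Finally, since $C_P,C_Q\in\mc{C}_{\hat{0},\hat{1}}$ lie in distinct $D(P\pinch Q)$-orbits, the group $D(P\pinch Q)$ does not act transitively on $\mc{C}_{\hat{0},\hat{1}}$, so $P\pinch Q$ is not diamond transitive. There is no real obstacle in this argument; the only point needing a little care is seeing that diamond moves respect the $P$/$Q$ dichotomy, and this is already packaged in Lemma \ref{comdiag}.
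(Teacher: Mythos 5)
Your proof is correct and follows essentially the same route as the paper: both invoke Lemma \ref{comdiag} to observe that the $D(P\pinch Q)$-action preserves the partition $\mc{C}=\mc{C}_P\amalg\mc{C}_Q$, and then note that $\mc{C}_{\hat{0},\hat{1}}$ meets both parts, so the action on $\mc{C}_{\hat{0},\hat{1}}$ cannot be transitive. Your parenthetical direct argument (that any diamond lies wholly in $P$ or in $Q$ because its middle elements are interior) is a nice self-contained alternative to citing Lemma \ref{comdiag}, but it is not a different proof in any essential sense.
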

\begin{proof}
Given a maximal chain $C$ in $P$, a maximal chain $C^\prime$ in $Q$, and $w\in D(P\pinch Q)$, the action of $f^{-1}(w)$ on $C$ gives another maximal chain in $P$, so by the commutative diagram in Lemma \ref{comdiag}, $wC\neq C^\prime$ for any $w\in D(P\pinch Q)$. 
\end{proof}

We wish to show that a thin poset $P$ is diamond transitive if and only if there are no pinch products ``living in" $P$. We now describe the appropriate notion of ``living in".

\begin{defn}
An induced subposet $S$ of $P$ is \textit{saturated} if every cover relation in $S$ is also a cover relation $P$.
A saturated subposet $S$ of a thin poset $P$ is \textit{diamond-complete} if for every diamond $d$ in $P$, and every saturated chain $C$ in $S$, $dC\subseteq S$. 
\end{defn}

\begin{remark}\label{dcinterval}
Every interval of a thin poset $P$ is clearly a diamond-complete thin subposet of $P$, but it is possible to have diamond complete subposets which are not intervals, for example, either copy of $\Br(S_3)$ in $\Br(S_3)\pinch\Br(S_3)$.
\end{remark}

\begin{defn}
Given a saturated chain $C$ in a thin poset $P$,
let $\mc{O}_C$ denote the orbit of $C$ with respect to the action of $D(P)$. 
Let $C$ be a saturated chain in a thin poset $P$. Define $P_C$ to be the induced subposet of $P$ with underlying set $\cup_{C^\prime\in\mc{O}_C}C^\prime$.
\end{defn}

%diamond complete subposets are something like ideals it seems

\begin{lemma}
Let $P$ be a thin poset, and let $C$ be a saturated chain in $P$. Then $P_C$ is a diamond-complete thin subposet of $P$. 
\label{saturated}
\end{lemma}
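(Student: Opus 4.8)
The plan is to first determine the exact shape of the underlying set $U:=\bigcup_{C'\in\mc{O}_C}C'$ of $P_C$, then isolate the single substantive point — a ``diamond‑stability'' property of $U$ — and finally deduce the three assertions (thin, saturated subposet, diamond‑complete) from it by routine arguments.

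I would begin with some easy facts about $\mc{O}_C$. A single diamond move on a saturated chain $D$ replaces one non‑extremal element of $D$ by another element of the same rank that still forms cover relations with its two neighbours on $D$; hence $dD$ is again a saturated chain, with the same minimum, maximum and length as $D$. Iterating over words in $D(P)$, every $C'\in\mc{O}_C$ is a saturated chain of $P$ with $\min C'=\min C=:p$, $\max C'=\max C=:q$, and length $\ell:=\rk_P(q)-\rk_P(p)$. Consequently $U\subseteq[p,q]$ and $p,q\in U$, so $P_C$ is an induced subposet of the interval $[p,q]$ with $\hat{0}_{P_C}=p$, $\hat{1}_{P_C}=q$, and the restriction of $\rk_P$ to $U$ is the candidate rank function for $P_C$. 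Moreover a diamond of $P$ can act nontrivially on a chain of $\mc{O}_C$ only when its bottom and top both lie on that chain, so only diamonds contained in $[p,q]$ are ever relevant; thus we may assume from now on that $P=[p,q]$, i.e.\ $P$ has $\hat{0}$, $\hat{1}$ and rank $\ell$.

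The crux is that every maximal chain of $P$ contained in $U$ already lies in $\mc{O}_C$; this ``diamond‑closedness'' of $U$ is what the remaining argument needs. Granting it, the stability statement follows: given a saturated chain $\gamma\subseteq U$ and a diamond $d=\{x,a,b,y\}$ of $P$, if $d$ acts trivially on $\gamma$ there is nothing to show, and otherwise $\gamma$ contains a side $x\lessdot a\lessdot y$ of $d$; extending $\gamma$ to a maximal chain $\Gamma\subseteq U$ (prepend an initial segment of an orbit chain through $\min\gamma$ and append a final segment of one through $\max\gamma$), the subchain $x\lessdot a\lessdot y$ is still consecutive in $\Gamma$, so $d\Gamma=(\Gamma\setminus\{a\})\cup\{b\}$; since $\Gamma\in\mc{O}_C$ we get $d\Gamma\in\mc{O}_C\subseteq U$, whence $b\in U$ and $d\gamma\subseteq U$. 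To prove ``$\Gamma\subseteq U\Rightarrow\Gamma\in\mc{O}_C$'' I would induct on $\ell$; for $\ell\le 2$ it is immediate since a diamond has only two maximal chains. For the inductive step, write $\Gamma=(p=g_0\lessdot g_1\lessdot\cdots\lessdot g_\ell=q)$, choose $\alpha\in\mc{O}_C$ with second vertex $g_1$ (possible since $g_1\in U$), and pass to $Q:=[g_1,q]$, a thin poset of rank $\ell-1$. The diamonds of $Q$ are exactly the diamonds of $P$ contained in $[g_1,q]$, and diamond moves in $Q$ on $\alpha\cap Q$ correspond precisely to diamond moves in $P$ on $\alpha$ that fix the second vertex; hence a chain of $Q$‑moves from $\alpha\cap Q$ to $\Gamma\cap Q$ lifts to a chain of $P$‑moves from $\alpha$ to $\Gamma$, giving $\Gamma\in\mc{O}_\alpha=\mc{O}_C$. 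The induction hypothesis supplies such a chain of $Q$‑moves provided $\Gamma\cap Q$ lies in the union of the relevant $D(Q)$‑orbit, and \textbf{this is the step that requires real work}: an element of $U$ lying above $g_1$ is a priori only known to lie on \emph{some} orbit chain of $P$, not necessarily one through $g_1$, so one genuinely has to use that $\mc{O}_C$ is a single connected component under diamond moves — very possibly by strengthening the inductive statement, or by establishing the diamond‑closedness of $U$ in a more symmetric way rather than by peeling off one end. I expect this to be the main obstacle.

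Granting the diamond‑closedness of $U$ (and the accompanying fact, proved by the same extension trick, that any two comparable elements of $U$ lie on a common chain in $\mc{O}_C$), the three assertions are formal. \emph{Saturated subposet:} if $u\lessdot_{P_C}v$ with $\rk_P(v)>\rk_P(u)+1$, a common orbit chain through $u$ and $v$ would have a vertex of $U$ strictly between them, contradicting the covering; hence $u\lessdot_P v$ and $\rk_P|_U$ is a genuine rank function for $P_C$. \emph{Thin:} a length‑$2$ interval $[u,v]$ of $P_C$ has $[u,v]_P$ equal to a diamond $\{u,a,b,v\}$ of $P$ (since $P$ is thin), and diamond‑closedness applied to the chain $u\lessdot a\lessdot v$, together with its counterpart for $u\lessdot b\lessdot v$, forces $a,b\in U$, so $[u,v]_{P_C}=\{u,a,b,v\}$ has exactly four elements. \emph{Diamond‑complete:} a saturated chain $\gamma$ of $P_C$ is, by the previous point, a saturated chain of $P$ contained in $U$, so $d\gamma\subseteq U=P_C$ for every diamond $d$ of $P$ by diamond‑closedness. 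Hence $P_C$ is a diamond‑complete thin subposet of $P$, modulo the crux identified above.
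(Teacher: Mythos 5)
Your proposal has a genuine, and indeed self-acknowledged, gap: the ``diamond-closedness'' of $U=\bigcup_{C'\in\mc{O}_C}C'$ (that every maximal chain of $P$ contained in $U$ already lies in $\mc{O}_C$), together with the companion fact that any two comparable elements of $U$ lie on a common chain of $\mc{O}_C$. You correctly observe that everything else in the lemma reduces to this, and that it is the nontrivial point: an element $g_i\in U$ above $g_1$ is only known to lie on \emph{some} orbit chain, not on one passing through $g_1$, so your peel-off-one-rank induction does not deliver the hypothesis it needs at the next stage. You flag this explicitly (``I expect this to be the main obstacle'') and do not close it, so as written the proposal is not a proof.

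It is worth noting that you have located exactly the point the paper's own argument treats summarily. The paper's proof declares ``every saturated chain in $P_C$ is a saturated chain in $P$ by construction'' and then, for diamond-completeness, writes that $dC'\in\mc{O}_C$ for $C'$ a saturated chain in $P_C$ --- an assertion that is immediate only when $C'$ is already an orbit chain (or a subchain of one), which is precisely what is in question. So your instinct about where the real content lies is sound. To actually close the gap you would want to prove something like: for $u\le v$ in $P_C$ there is a single $C'\in\mc{O}_C$ containing both. A reasonable line of attack is a double induction on $\rk(u)-\rk(p)$ and $\rk(v)-\rk(u)$, using thinness to replace the element of an orbit chain at a given rank by a diamond move so as to hit the desired $u$ (respectively $v$); but this is delicate and is not supplied either by your outline or, as far as I can tell, by the paper's one-paragraph proof. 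Everything downstream of the crux (the extension of $\gamma$ to a maximal chain $\Gamma\subseteq U$, and the deductions of saturatedness, thinness and diamond-completeness) is fine and matches what the paper does in spirit.
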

\begin{proof}
Every saturated chain in $P_C$ is a saturated chain in $P$ by construction. Given a diamond $d$ in $P$ and a saturated chain $C^\prime$ in $P_C$, $dC^\prime\in\mc{O}_C$ and therefore $dC^\prime\subseteq P_C$ by definition, so $P_C$ is diamond complete. To see that $P_C$ is thin, suppose that $x<y$ in $P_C$ and $\rk(y)=\rk(x)+2$. Since $x<y$ is not saturated in $P$, it must not be saturated in $P_C$. Therefore there is an element $a\in P_C$ with $x<a<y$. Extend this to a maximal chain $C^\prime$ in $P_C$. Since $P$ is thin, there exists another element $b\in P$ with $x<b<y$, and consider the diamond $d=\{x,a,b,y\}$. By definition of $P_C$, all elements of $dC^\prime$ are in $P_C$ and therefore $b\in P_C$, so the interval $[x,y]$ in $P_C$ consists of four elements $\{x,a,b,y\}$. 
\end{proof}

\begin{lemma}
Given $x<y$ in $P$ with $\rk(y)\geq \rk(x)+3$, suppose $C$ and $D$ are saturated chains from $x$ to $y$ such that $\mc{O}_C\neq\mc{O}_D$ and $P_C\cap P_D=\{x,y\}$ . Then $P_C\cup P_D$ is a diamond-complete thin subposet of $P$ isomorphic to $P_C\pinch P_D$. 
\label{inducedpinch}
\end{lemma}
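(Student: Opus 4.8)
The plan is to exhibit an order isomorphism $P_C\cup P_D\to P_C\pinch P_D$ and then to deduce diamond-completeness and thinness of $P_C\cup P_D$ as consequences. Since $P_C\cap P_D=\{x,y\}$, and since (by Lemma \ref{saturated}) $P_C$ and $P_D$ are diamond-complete thin subposets of $P$, each with minimum $x$, maximum $y$, and rank $\rk(y)-\rk(x)=n\geq 3$, the set $P_C\cup P_D$ equals $(P_C\setminus\{x,y\})\amalg(P_D\setminus\{x,y\})\amalg\{x,y\}$, which is precisely the underlying set of $P_C\pinch P_D$ once we identify $x$ with the new $\hat 0$ and $y$ with the new $\hat 1$. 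So the natural candidate for the isomorphism is the identity on this common underlying set; one direction of order-preservation (from $P_C\pinch P_D$ to $P_C\cup P_D$) is immediate from the definition of the pinch product, using that $P_C$ and $P_D$ are \emph{induced} subposets of $P$.

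All of the content therefore concentrates in the reverse implication: if $u,v\in P_C\cup P_D$ and $u\leq_P v$, then $u\leq v$ in $P_C\pinch P_D$. After handling the trivial cases ($u=x$; $v=y$; $u,v$ both in $P_C$; $u,v$ both in $P_D$), this reduces to a single claim, which I expect to be the crux: \emph{no element of $P_C\setminus\{x,y\}$ is comparable in $P$ to any element of $P_D\setminus\{x,y\}$}.

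To prove the claim I would suppose for contradiction that $u<_P v$ with $u\in P_C\setminus\{x,y\}$ and $v\in P_D\setminus\{x,y\}$ (the opposite inequality being symmetric under swapping $C$ and $D$), and choose such a pair with $\rk(v)-\rk(u)$ minimal. Inspecting a saturated chain from $u$ to $v$: any interior vertex of it lying in $P_C$ (resp.\ $P_D$) would produce a strictly smaller bad pair, so one aims to reduce to the case of a cover relation $u\lessdot_P v$. In that case, pick a saturated chain $C'\in\mc{O}_C$ through $u$; since $\rk(v)\leq\rk(y)-1$, this chain contains a subchain $u\lessdot c'\lessdot c''$ with $c',c''\in P_C$. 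The interval $[u,c'']$ is a diamond $\{u,c',w,c''\}$, and performing that diamond move on $C'$ shows $w\in P_C$. If $v=w$ we are done, since then $v\in P_C\cap P_D=\{x,y\}$, contradicting $v\notin\{x,y\}$; otherwise one exploits thinness of a further diamond $[u,v^{+}]$ above $v$ (with $v\lessdot v^{+}$) together with the diamond-completeness of $P_C$ (and, if needed, an induction on $\rk(y)-\rk(x)$ applied through the strictly smaller interval $[u,y]$) to reroute a chain of $\mc{O}_C$ so that it passes through $v$, again forcing $v\in P_C$. The base case $\rk(y)-\rk(x)=3$ is clean: there $\rk(u)=\rk(x)+1$ is forced, $[u,y]$ is itself a diamond containing both $c'$ and $v$, and a single diamond move on $C'$ already carries it through $v$. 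I expect the honest difficulty of the whole lemma to live in making this rerouting argument precise in the covering case.

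Once the incomparability claim is established, the identity map is an order isomorphism $P_C\cup P_D\cong P_C\pinch P_D$, so $P_C\cup P_D$ is thin because a pinch product of thin posets is thin (shown earlier). For diamond-completeness: by the incomparability claim, a cover relation of $P_C\cup P_D$ cannot join an element of $P_C\setminus\{x,y\}$ to one of $P_D\setminus\{x,y\}$, so every saturated chain $E$ of $P_C\cup P_D$ lies wholly in $P_C$ or wholly in $P_D$; if $E$ lies in $P_C$ then for any diamond $d$ of $P$ we get $dE\subseteq P_C\subseteq P_C\cup P_D$ by diamond-completeness of $P_C$ (Lemma \ref{saturated}), and symmetrically for $P_D$. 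Hence $P_C\cup P_D$ is a diamond-complete thin subposet of $P$ isomorphic to $P_C\pinch P_D$.
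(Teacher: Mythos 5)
Your plan — identify the underlying sets, argue that the identity is an order isomorphism, and then deduce thinness and diamond-completeness — is the right framing and coincides in spirit with the paper's; the final paragraph deducing diamond-completeness and thinness from the incomparability claim together with Lemma~\ref{saturated} is correct and essentially matches the paper.

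The problem is in the crux. Your reduction from $u<_P v$ to a cover relation $u\lessdot_P v$ rests on the observation that an interior vertex of a saturated chain from $u$ to $v$ lying in $P_C$ (or $P_D$) yields a strictly smaller bad pair, but you do not address interior vertices lying in $P\setminus(P_C\cup P_D)$, and such vertices can certainly occur inside $[x,y]$ (the union $P_C\cup P_D$ is generally a proper subset of the interval). Without eliminating that case, the minimal bad pair need not be a cover relation, so the reduction is incomplete. The subsequent rerouting in the covering case is also only a sketch: after the diamond $\{u,c',w,c''\}$ there is no reason for $v$ to lie in $[u,c'']$; when you pass to a further diamond $[u,v^{+}]$ its fourth vertex has no a priori relation to $P_C$; and the proposed ``induction through $[u,y]$'' would require verifying the hypotheses of the lemma (distinct $D(P)$-orbits and intersection exactly the two endpoints) for the truncated chains, which is not automatic. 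You correctly anticipate that this is where the real difficulty lives, and the proposal leaves it open. By contrast, the paper avoids the minimal-bad-pair formulation and inducts directly on $\ell=\rk(y)-\rk(x)$: from a putative cross cover relation $c_i\lessdot d_{i+1}$ between the chosen chains it forms two pairs of shorter chains (one pair sharing the endpoints $x$ and $d_{i+1}$, the other sharing $c_i$ and $y$), applies the inductive hypothesis to each, and then extracts a contradiction with thinness from the resulting constraints; that structure makes explicit exactly what must be checked at each step in a way your sketch does not.
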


\begin{proof}
First we show that $P_C\cup P_D$ is induced by induction on the length $\ell$ of $C$. The base case is for $\ell=3$. In this case, $C=x\lessdot c_1\lessdot c_2\lessdot y$ and $D=x\lessdot d_1\lessdot d_2\lessdot y$.  Suppose towards a contradiction that  $P_C\cup P_D$ is not induced. Then without loss of generality we must have $c_1\lessdot d_2$. However in this case, the open interval $(c_1,y)$ consists of at least 3 elements, contradicting thinness of $P$. Suppose now that $\ell>3$, and again suppose towards a contradiction that  $P_C\cup P_D$ is not induced. In this case, $C=x\lessdot c_1\lessdot \dots\lessdot c_{\ell-1}\lessdot y$ and $D=x\lessdot d_1\lessdot \dots\lessdot d_{\ell-1}\lessdot y$. Then without loss of generality there is a cover relation $c_i\lessdot d_{i+1}$ for some $1\leq i\leq \ell-2$. Let $C^\prime$ denote the chain $x\lessdot c_1\lessdot \dots\lessdot c_i\lessdot d_{i+1}$ and let $D^\prime$ denote the chain $x\lessdot d_1\lessdot \dots\lessdot d_i\lessdot d_{i+1}$. Then $P_{C^\prime}$ is an induced subposet of $P_C$ and $P_{D^\prime}$ is an induced subposet of $P_D$. Let $C^{\prime\prime}$ denote the chain $c_i\lessdot c_{i+1}\lessdot \dots\lessdot \lessdot y$ and let $D^{\prime\prime}$ denote the chain $c_i\lessdot d_{i+1}\lessdot \dots\lessdot y$. Then $P_{C^{\prime\prime}}$ is an induced subposet of $P_C$ and $P_{D^{\prime\prime}}$ is an induced subposet of $P_D$. By induction, $P_{C^\prime}\cup P_{D^\prime}$ and $P_{C^{\prime\prime}}\cup P_{D^{\prime\prime}}$ are induced and meaning that there are no other cover relations between $C$ and $D$ besides $c_i\lessdot d_{i+1}$ and possibly $d_i\lessdot c_{i+1}$. This contradicts thinness of $P$ since then we have $(c_i,d_{i+2})=\{d_{i+1}\}$. 

We have that $P_C\cup P_D$ is saturated since any saturated chain in $P_C\cup P_D$ is either a saturated chain in $P_C$ or a saturated chain in $P_D$, and so by Lemma \ref{saturated}, is saturated in $P$. Furthermore, $P_C\cup P_D$ is diamond-complete since any saturated chain in $P_C\cup P_D$ lies entirely in $P_C$ or $P_D$ respectively, and thus by Lemma \ref{inducedpinch}, any diamond move on such a chain gives another chain in $P_C$ or $P_D$ respectively. Lastly, $P_C\cup P_D$ is thin since any interval of length two in $P_C\cup P_D$ is either an interval of length two in $P_C$ or in $P_D$ and is therefore a diamond again by Lemma \ref{saturated}.
\end{proof}

\begin{theorem}
Let $P$ be a thin poset. Then $P$ is diamond transitive if and only if $P$ contains no diamond-complete subposet isomorphic to a pinch product of two thin posets. 
\label{pinchthm}
\end{theorem}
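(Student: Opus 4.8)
The plan is to prove the two implications separately. First I would dispatch the direction ``$P$ contains a diamond-complete pinch product $\Rightarrow$ $P$ is not diamond transitive,'' which follows quickly from the work already done. Suppose $S\subseteq P$ is a diamond-complete subposet together with an isomorphism $S\cong P_1\pinch P_2$, where $P_1,P_2$ are thin of common rank $n\geq 3$. The key observation is that, because $S$ is saturated and diamond-complete and $P$ is thin, any diamond of $P$ that acts nontrivially on a saturated chain contained in $S$ is itself a diamond of $S$; hence the $D(P)$-orbit of such a chain coincides with its $D(S)$-orbit, and in particular stays inside $S$. Now choose a maximal chain $C$ of $S$ running through the $P_1$-part and a maximal chain $C'$ running through the $P_2$-part (these exist and are genuinely distinct since $n\geq 3$). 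Both $C$ and $C'$ are maximal chains of the interval $[\hat{0}_S,\hat{1}_S]$ of $P$, and the lemma that $P_1\pinch P_2$ is not diamond transitive (via the commutative square of Lemma \ref{comdiag}) shows they lie in distinct $D(S)$-orbits, hence in distinct $D(P)$-orbits. So $P$ is not diamond transitive.

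For the converse, suppose $P$ is not diamond transitive. Among all pairs $x\leq y$ in $P$ admitting saturated chains from $x$ to $y$ lying in two different $D(P)$-orbits, I would fix one, $(x,y)$, with $\rk(y)-\rk(x)$ minimal, together with such chains $C$ and $D$. Since a thin poset has all maximal chains of any interval of length $\leq 2$ in a single orbit (trivially for length $\leq 1$, and for a diamond the two maximal chains are exchanged by the corresponding diamond move), minimality forces $\rk(y)-\rk(x)\geq 3$. The strategy is then to show $P_C\cap P_D=\{x,y\}$; granting this, Lemma \ref{inducedpinch} gives that $P_C\cup P_D$ is a diamond-complete subposet of $P$ isomorphic to $P_C\pinch P_D$, and $P_C,P_D$ are thin by Lemma \ref{saturated}, so $P$ contains a diamond-complete pinch product of two thin posets, as required.

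The hard part will be the claim $P_C\cap P_D=\{x,y\}$. Since $P_C$ and $P_D$ are unions of chains from $x$ to $y$, they both lie in $[x,y]$ and contain $x$ and $y$, so it suffices to exclude a common element $z$ with $x<z<y$. Such a $z$ would lie on some $C_1\in\mc{O}_C$ and some $D_1\in\mc{O}_D$; split each at $z$ into a lower part (from $x$ to $z$) and an upper part (from $z$ to $y$), written $C_1^-,C_1^+$ and $D_1^-,D_1^+$. The crucial sub-claim is that applying a diamond move via a diamond whose four elements all lie in $[x,z]$ alters only the rank-$\leq\rk(z)$ portion of a chain, and symmetrically for diamonds inside $[z,y]$; this is pure rank bookkeeping and uses that every interval is itself diamond-complete (Remark \ref{dcinterval}), so such moves stay within the interval. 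Consequently, if $C_1^-,D_1^-$ lay in one $D(P)$-orbit and $C_1^+,D_1^+$ in another, concatenating the two move-sequences would carry $C_1$ to $D_1$, contradicting $\mc{O}_C\neq\mc{O}_D$. Hence either $C_1^-$ and $D_1^-$ lie in distinct orbits or $C_1^+$ and $D_1^+$ do; in the first case the pair $(x,z)$ with chains $C_1^-,D_1^-$, and in the second the pair $(z,y)$, witnesses non-diamond-transitivity over a strictly shorter interval, contradicting the minimality of $\rk(y)-\rk(x)$. Therefore $P_C\cap P_D=\{x,y\}$, and Lemma \ref{inducedpinch} completes the proof.
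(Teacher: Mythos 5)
Your proof is correct and follows the same overall route as the paper: in one direction you exploit diamond-completeness to confine the $D(P)$-orbit of a chain to one half of the pinch product, and in the other direction you reduce to Lemma \ref{inducedpinch} by producing chains $C,D$ with $P_C\cap P_D=\{x,y\}$. The one real difference is that the paper simply asserts that such a pair $(x,y)$ with $P_C\cap P_D=\{x,y\}$ can be found, while you justify it with a clean minimality argument (take $\rk(y)-\rk(x)$ minimal among witnesses to non-transitivity, and split any hypothetical interior point $z$ into lower and upper subchains to produce a shorter witness); this fills a step the paper leaves implicit, and the splitting lemma you use is sound since a diamond contained in $[x,z]$ can only perturb the rank-$\le\rk(z)$ portion of a saturated chain.
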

\begin{proof}
If $P$ is not diamond transitive, one can find saturated chains $C$ and $D$ from $x$ to $y$ in $P$ for some $x,y\in P$ such that $\mc{O}_C\neq\mc{O}_D$ and $P_C\cap P_D=\{x,y\}$. Lemma \ref{inducedpinch} then provides a diamond-complete thin subposet isomorphic to a pinch product. Conversely, suppose $P$ contains a diamond-complete subposet isomorphic to a pinch product, say of the form $Q=Q_1\pinch Q_2$ with some unique minimal element $\hat{0}_Q$ and maximal element $\hat{1}_Q$. Let $C_1\subseteq Q_1$ and $C_2\subseteq Q_2$ be a saturated chains from $\hat{0}_Q$ to $\hat{1}_Q$ in $P$. Note that $Q_1$ and $Q_2$ are diamond-complete by Lemma \ref{saturated} since $Q_1=P_{C_1}$ and $Q_2=P_{C_2}$. For any sequence of diamonds $d_1\dots d_r$ in $P$, $(d_1\dots d_r)C_1\subseteq Q_1$, so $C_1$ and $C_2$ are in different orbits of the action of $D(P)$ and therefore $P$ is not diamond transitive.
%
%Suppose that $P$ contains a diamond-complete subposet isomorphic to a pinch product $Q=Q_1\pinch Q_2$ of thin posets $Q_1,Q_2$. Then there is a saturated chain $C$ in $Q_1$ and a saturated chain $D$ in $Q_2$ which are in different orbits in the action of $D(Q)$ on the set of maximal chains in $Q$. 
%
%One direction is trivial: if there is an induced poset isomorphic to a pinch product $Q_1\pinch Q_2$, then extend any maximal chain in $Q_1$ (resp $Q_2$) to a maximal chain in $P$. These resulting chains cannot be in the same orbit. \ACcom{wait, is this true? maybe by extending, one can come up with extra diamond moves. We need to prove this doesn't happen.} For the other direction, we will now construct an induced pinch product in any thin poset which is not diamond transitive. 
%
%Assume that $P$ is not diamond transitive. Therefore $P$ contains some pair of saturated chains $C$ and $D$ in different orbits of $D(P)$. \ACcom{incomplete}
\end{proof}

\begin{remark}
With Remark \ref{dcinterval} in mind, one might wish to restate the above theorem as ``a thin poset $P$ is diamond transitive if and only if every diamond-complete subposet of $P$ is an interval of $P$''.
\end{remark}

In Remark \ref{cwremark} and Example \ref{pinchex}, we noted that CW posets share certain similarities with diamond transitive thin posets. We now further explore this connection.

\begin{prop}
If $P$ can be written in the form $P=P_1\pinch P_2$, where $P_1$ and $P_2$ are thin posets of equal rank, then $P$ is not a CW poset. \label{pinchnotcw}
\end{prop}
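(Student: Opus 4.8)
The plan is to prove this directly from Definition \ref{cwposetdefn}: I will exhibit an element $x\in P$ whose open interval $(\hat{0},x)$ is \emph{not} homeomorphic to a sphere. The obvious candidate is $x=\hat{1}$, which exists since $P=P_1\pinch P_2$ is built with a new top element.

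First I would identify the poset $(\hat{0},\hat{1})$ inside $P=P_1\pinch P_2$. From the three defining conditions on the partial order, $\hat{0}\leq z\leq \hat{1}$ for every $z\in P$, so $(\hat{0},\hat{1})=P\setminus\{\hat{0},\hat{1}\}$, which by construction is $Q_1\amalg Q_2$ with $Q_i:=P_i\setminus\{\hat{0}_{P_i},\hat{1}_{P_i}\}$, and with no element of $Q_1$ comparable to any element of $Q_2$. Hence every chain of $(\hat{0},\hat{1})$ lies wholly in $Q_1$ or wholly in $Q_2$, so $\Delta(\hat{0},\hat{1})=\Delta(Q_1)\cup\Delta(Q_2)$ (the two subcomplexes meeting only in the empty face) and therefore $\norm{\Delta(\hat{0},\hat{1})}\cong\norm{\Delta(Q_1)}\sqcup\norm{\Delta(Q_2)}$ as topological spaces.

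Next I would check that each $\norm{\Delta(Q_i)}$ is nonempty and in fact contains a $1$-simplex. Since $P_i$ is graded of rank $n\geq 3$ with $\hat{0}_{P_i},\hat{1}_{P_i}$, a maximal chain $\hat{0}_{P_i}=z_0\lessdot z_1\lessdot\cdots\lessdot z_n=\hat{1}_{P_i}$ has $\rk(z_j)=j$; the elements $z_1$ and $z_2$ have ranks $1$ and $2$, strictly between $0$ and $n$, so they lie in $Q_i$, and $\{z_1,z_2\}$ is a chain, i.e., an edge of $\Delta(Q_i)$. Consequently $\norm{\Delta(\hat{0},\hat{1})}$ is a disjoint union of two infinite spaces: it is disconnected, hence not homeomorphic to $S^d$ for any $d\geq 1$, and it has more than two points, hence not homeomorphic to $S^0$. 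So $(\hat{0},\hat{1})$ is not homeomorphic to a sphere and $P$ fails to be a CW poset.

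The hard part is essentially nonexistent once the order on the pinch product is unwound; the only point requiring a moment's care is ruling out the degenerate case $\norm{\Delta(\hat{0},\hat{1})}\cong S^0$, and that is exactly where the hypothesis $n\geq 3$ (part of the definition of $\pinch$) enters, via the genuine edge found in each $\Delta(Q_i)$.
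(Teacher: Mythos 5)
Your proof is correct, and it takes a genuinely different route from the paper's. The paper's argument goes through the CW-complex characterization (Theorem \ref{cwresults}(1)): assuming $P$ were the face poset of a regular CW complex, it removes $\hat{1}$ to obtain the face poset of a disconnected complex $X_1\amalg X_2$, and observes that attaching a single top cell cannot reconnect it, since the image of the attaching map must lie in one piece. You instead work directly from Definition \ref{cwposetdefn}, showing the open interval $(\hat{0},\hat{1})$ is not homeomorphic to a sphere: it equals $Q_1\amalg Q_2$ with no cross-comparisons, so every chain lies in a single $Q_i$, the order complex realizes to a disjoint union $\norm{\Delta(Q_1)}\sqcup\norm{\Delta(Q_2)}$, and disconnectedness rules out $S^d$ for $d\geq 1$ while the $1$-simplex you extract from a maximal chain (using $n\geq 3$ so that ranks $1$ and $2$ both land strictly inside) rules out $S^0$. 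Both proofs hinge on the same decomposition of $P\setminus\{\hat{0},\hat{1}\}$, but yours is the more elementary one: it avoids invoking the equivalence between CW posets and regular CW complexes, and avoids the paper's somewhat informal appeal to attaching maps. Your care in excluding the $S^0$ degeneracy is precisely where the hypothesis $n\geq 3$ in the definition of $\pinch$ earns its keep, and you handle it correctly.
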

\begin{proof}
Since $P$ has $\hat{1}$, if $P$ is a face poset of a CW complex $Y$, then  $Y$ must be connected. Suppose to the contrary that $P=P_1\pinch P_2$ is the face poset of a regular CW complex. Removing the top element, $P=P_1\pinch P_2\setminus \hat{1}$ is also the face poset of a regular CW complex $X=X_1\amalg X_2$ where $P_1\setminus \hat{1}$ is the face poset of $X_1$ and $P_2\setminus \hat{1}$ is the face poset of $X_2$. But for $n>1$, there is no way to glue an $n$-cell to an $n-1$ dimensional skeleton which is not connected, and obtain a connected space (i.e. the image of a continuous map $f:e^n\to X_1\amalg X_2$ must be contained in either $X_1$ or $X_2$).  
\end{proof}

\begin{theorem}\label{cwdt}
CW posets are diamond transitive.
\end{theorem}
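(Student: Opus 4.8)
The plan is to reduce the statement to CW posets with a top element, prove that by induction on the rank, and extract the contradiction from Proposition~\ref{pinchnotcw}. First I would make the reduction. A thin poset $P$ is diamond transitive exactly when $D(P)$ acts transitively on $\mc{C}_{x,y}$ for every $x\le y$, and a diamond move alters a saturated chain $C\in\mc{C}_{x,y}$ only when $C$ contains a side of the diamond, in which case all four vertices of that diamond lie in $[x,y]$. Hence the $D(P)$-action on $\mc{C}_{x,y}$ factors through, and coincides with, the $D([x,y])$-action. Since $[x,y]$ is a principal lower order ideal of the upper set $\{z\ge x\}$, Theorem~\ref{cwresults}(3)--(4) shows $[x,y]$ is again a CW poset, of rank $\rk(y)-\rk(x)$. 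So it suffices to prove: for every CW poset $Q$ with $\hat 0$ and $\hat 1$, $D(Q)$ acts transitively on $\mc{C}_{\hat 0,\hat 1}$. I would prove this by induction on $n=\rk(Q)$, the cases $n\le 2$ being immediate from thinness.

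For the inductive step ($n\ge 3$) the key lemma---where the inductive hypothesis is spent---is: any two maximal chains $E,F$ of $Q$ from $\hat 0$ to $\hat 1$ passing through a common element $z$ with $\hat 0<z<\hat 1$ lie in the same $D(Q)$-orbit. To see this, cut $E$ and $F$ at $z$: their lower pieces are maximal chains of $[\hat 0,z]$ and their upper pieces are maximal chains of $[z,\hat 1]$, both proper intervals of $Q$ and hence CW posets of rank $<n$. By induction the lower pieces are joined by diamond moves inside $[\hat 0,z]$, and each such move extends verbatim to a diamond move of the whole chain that fixes the upper piece; symmetrically for the upper pieces. Writing $Q_C=\bigcup_{C'\in\mc{O}_C}C'$ as in Lemma~\ref{saturated}, this lemma forces $Q_{\mc{O}}\cap Q_{\mc{O}'}=\{\hat 0,\hat 1\}$ for distinct orbits $\mc{O},\mc{O}'$ and rules out any cover relation between $Q_{\mc{O}}\setminus\{\hat 0,\hat 1\}$ and $Q_{\mc{O}'}\setminus\{\hat 0,\hat 1\}$, since such a cover relation would sit on a maximal chain meeting two orbit posets in interior elements.

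Now suppose for contradiction that $\mc{C}_{\hat 0,\hat 1}$ has at least two orbits. Fix one orbit $\mc{O}_1$, put $A=Q_{\mc{O}_1}$ and $B=\bigcup_{\mc{O}\ne\mc{O}_1}Q_{\mc{O}}$. By Lemma~\ref{saturated} each $Q_{\mc{O}}$ is a thin, saturated, diamond-complete subposet of $Q$; combined with the previous paragraph (which localizes every saturated chain of $B$ into a single orbit poset), $A$ and $B$ are thin subposets of $Q$ of full rank $n$, both having $\hat 0$ and $\hat 1$ as minimum and maximum, with $A\cap B=\{\hat 0,\hat 1\}$, $A\cup B=Q$, and no order relation running between their interiors. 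Consequently $Q\cong A\pinch B$ as posets. Since $n\ge 3$, Proposition~\ref{pinchnotcw} then says $Q$ is not a CW poset---a contradiction. Hence $\mc{C}_{\hat 0,\hat 1}$ consists of a single orbit, completing the induction, and with it the proof that CW posets are diamond transitive.

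The main obstacle is the inductive lemma together with the verification that the grouped halves $A,B$ legitimately realize $Q$ as a pinch product: one must confirm that $A$ and $B$ are each thin (for $A$ this is Lemma~\ref{saturated}; for $B$ one uses that the no-cross-cover statement confines every length-$2$ interval of $B$ to a single $Q_{\mc{O}}$) and that no order relation joins an interior element of $A$ to one of $B$. Should this grouping prove awkward, an alternative is to bypass Proposition~\ref{pinchnotcw} and rerun its argument directly: $Q\setminus\{\hat 1\}$ is a CW poset by Theorem~\ref{cwresults}(3) whose cell set $Q\setminus\{\hat 0,\hat 1\}$ splits into two or more nonempty blocks $Q_{\mc{O}}\setminus\{\hat 0,\hat 1\}$ with no incidences between them, so its realization is disconnected; yet every cell of $Q$ is a face of the top cell $\hat 1$, so the realization of $Q$ is the closed ball $\overline{\hat 1}$, which is connected---contradiction.
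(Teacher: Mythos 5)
Your proof is correct, and it takes a genuinely different route than the paper's. The paper inducts on $|P|$ and leans on Theorem~\ref{pinchthm}: assuming $P$ contains a diamond-complete subposet $L\cong$ a pinch product, it first disposes of the case $[\hat 0_L,\hat 1_L]\subsetneq P$ by induction, then in the spanning case shows $P=L\pinch L'$ and contradicts Proposition~\ref{pinchnotcw}. You instead reduce to intervals $[\hat 0,\hat 1]$ at the outset (a nice observation, implicit but not stated in the paper), induct on rank, and isolate a sharper key lemma: two maximal chains through a common interior element $z$ are diamond-equivalent, by applying the inductive hypothesis to $[\hat 0,z]$ and $[z,\hat 1]$. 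From this you derive the partition of interior elements into mutually non-incident orbit blocks and read off a pinch-product decomposition $Q\cong A\pinch B$ directly, bypassing Theorem~\ref{pinchthm} and Lemma~\ref{inducedpinch} entirely. Both proofs bottom out at Proposition~\ref{pinchnotcw}, and you even sketch a self-contained substitute (disconnectedness of the cell complex for $Q\setminus\{\hat 1\}$ versus connectedness of the boundary sphere of the top cell). The trade-off: your argument is more self-contained if one only wants this theorem, and makes transparent exactly where the induction is spent; the paper's route is more modular, deriving the theorem as a quick corollary of the independently interesting obstruction criterion. You are right that the one place needing care is thinness of $B$, and the verification you sketch (each length-2 interval of $B$ sits in a single $Q_{\mc O}$, because rank-$1$ and rank-$(n-1)$ elements are interior and the no-cross-cover property localizes them) is the right one.
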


\begin{proof}
Let $P$ be a CW poset. By Theorem \ref{pinchthm}, it suffices to show that $P$ contains no diamond-complete subposets isomorphic to pinch products. We proceed by induction on $|P|$, with the base case $|P|=2$ obvious. For $|P|>2$, suppose towards a contradiction that $P$ contains a diamond-complete subposet $L$ such that $L$ is isomorphic to a pinch product. If $L=P$ then this contradicts Proposition \ref{pinchnotcw} so we can assume $L$ is a proper subset of $P$.  By Theorem \ref{cwresults} parts 3 and 4, we see that intervals in CW posets are CW posets. Since $L$ is isomorphic to a pinch product, $L$ has $\hat{0}_L$ and $\hat{1}_L$. If $\hat{0}_L\neq\hat{0}_P$, or $\hat{1}_L$ is not maximal in $P$, or if $P$ has no $\hat{1}$, then the interval $[\hat{0}_L,\hat{1}_L]$ in $P$ is a proper subset of $P$, and hence a CW poset. By induction, we can assume that $[\hat{0}_L,\hat{1}_L]$ is diamond transitive. But $[\hat{0}_L,\hat{1}_L]$ contains $L$ as a diamond complete subposet, which contradicts Theorem \ref{pinchthm}. Thus for the remainder of the proof, we can assume that $\hat{0}_L=\hat{0}_P$ and $\hat{1}_L=\hat{1}_P$.

Let $L=L_1\pinch L_2$. Let $x\in P\setminus L$ and $y\in L\setminus\{\hat{0},\hat{1}\}$. If $x\leq y$ then there must be some $x^\prime\in P\setminus L$ and $y^\prime\in L\setminus\{\hat{0},\hat{1}\}$ such that $x^\prime\lessdot y^\prime$. The lower interval $[\hat{0},y^\prime]$ is a CW poset and is therefore diamond transitive by induction. Therefore, any saturated chain from $\hat{0}$ to $y^\prime$ in $L$ can be taken to a saturated chain from $\hat{0}$ which goes through $x^\prime$ to $y^\prime$, via the action of $D(P)$. Thus we have $x^\prime\in L$ by diamond-completeness, giving a contradiction to the assumption that $x^\prime\in P\setminus L$. Similarly we can show that it is impossible to have $y\leq x$. Thus, setting $L^\prime=P\setminus L\cup\{\hat{0},\hat{1}\}$, we conclude that $L^\prime$ is a thin poset with the same rank as $L$, and $L\cap L^\prime=\{\hat{0},\hat{1}\}$, therefore we have $P=L\pinch L^\prime$, contradicting Proposition \ref{pinchnotcw}.
\end{proof}

\begin{remark}
Since CW posets are a special case of Eulerian posets, one might ask whether all Eulerian posets are diamond transitive. However, the poset in Example \ref{pinchex} provides an example of a non-diamond transitive Eulerian poset. 
\end{remark}

\begin{ex}\label{dtnotcw}
This example was suggested to the author by Anton Mellit \cite{mellit}. Let $P$ be the face poset of a triangulation of a 2-torus, with a unique maximal element adjoined. Then $P$ is diamond transitive, but is not a CW poset since $\Delta(\hat{0},\hat{1})$ is a barycentric subdivision of the torus, which by the classification of surfaces is not homeomorphic to a sphere. In fact, since $\mu(\hat{0},\hat{1})=\tilde{\chi}(\Delta(\hat{0},\hat{1}))=-1$ is the reduced Euler characteristic of the torus by Philip Hall's theorem, and for each facet $F$ of the torus, we have $\mu(\hat{0},F)=-1$ (the reduced Euler characteristic of a circle), $P$ is not even Eulerian (the M\"{o}bius function values alternate on ranks for Eulerian posets). Thus we have the following inclusions 
\[
    \{\textnormal{CW posets}\}
    \subset \{\textnormal{diamond transitive posets}\}
    \subset\{\textnormal{thin posets}\}, 
\]
\[
    \{\textnormal{CW posets}\}
    \subset\{\textnormal{Eulerian posets}\}
    \subset\{\textnormal{thin posets}\},
\]
but there exist examples of diamond transitive posets which are not Eulerian, and examples of Eulerian posets which are not diamond transitive (Example \ref{pinchex}). 
\end{ex}

\begin{remark}
The author knows of no diamond transitive Eulerian poset which is not a CW poset. One might ask whether the following is true:
\[\{\textnormal{diamond transitive posets}\}\cap \{\textnormal{Eulerian posets}\}= \{\textnormal{CW posets}\}.\] 
This would give an almost too-good-to-be-true classification of CW posets, so it seems unlikely, but worth asking.
\end{remark}

%is computational complexity an invariant? Or does this just depend on the description of the objects? For example, if we can show that checking if a poset is CW is np-hard, and checking DT and Eulerian can be done in polynomial time, does that mean the equality cannot be true?

%\begin{lemma}
%Suppse that $P$ is a thin poset with $\hat{0}$ and $\hat{1}$. Then the order complex of the poset $P\setminus\{0,1\}$ is a manifold.
%\end{lemma}
%\begin{proof}
%\end{proof}

\section{Cohomology of Functors on Thin Posets}
\label{functorsonposets}

A poset $P$ can be identified as a category whose objects are elements of $P$ and with a unique morphism, denoted $x\leq y$, from $x$ to $y$ if and only if $x\leq y$ in $P$ (otherwise $\Hom(x,y)=\varnothing$). By transitivity of $P$, we can define composition via $(y\leq z)\circ (x\leq y)=x\leq z$. Reflexivity of $P$ guarantees identity morphisms, and antisymmetry says that the only isomorphisms are the identity morphisms. A functor on a poset then can be identified as a labeling of the nodes and edges of the Hasse diagram so that for any $x\leq y$, compositions of morphisms along any two saturated chains between $x$ and $y$ agree. 
%In the case of diamond transitive thin posets, we show in Theorem \ref{dtfunctor} that it is enough for such a labeling to commute on diamonds.

Let $P$ be a poset (regarded as a category), let $\{F_x\}_{x\in P}$ be a labeling of the vertices of the Hasse diagram by objects of a category $\mc{C}$, and let $\{F_{y,x}\}_{x\lessdot y\in C(P)}$ be a labeling of the edges $C(P)$ of the Hasse diagram by morphims in $\mc{C}$, where $F_{y,x}:F_x\to F_y$. Suppose that for any $x\leq y$, and any two directed paths $x=x_0,x_1,\dots,x_n=y$, $x=x_0^\prime,x_1^\prime,\dots,x_k^\prime=y$, from $x$ to $y$, we have
$$F_{x_{n},x_{n-1}}\circ\dots\circ F_{x_1,x_0}=F_{x^\prime_{k},x^\prime_{k-1}}\circ\dots\circ F_{x^\prime_1,x^\prime_0}.$$
Then the following defines a functor $F:P\to\mc{C}$:
\begin{equation}
    F(x)=F_x \ \ \ \ \text{for all}\ x\in P
    \label{functone}
\end{equation}
\begin{equation}
    F(x\leq y)=F_{x_{n}, x_{n-1}}\circ\dots\circ F_{x_1, x_0}
    \label{functtwo}
\end{equation}
where $x=x_0\lessdot x_1\lessdot\dots\lessdot x_{n-1}\lessdot x_n=y$ is any directed path in the Hasse diagram from $x$ to $y$. 

\begin{theorem}
Let $P$ be a diamond transitive thin poset, let $\{F_x\}_{x\in P}$ be a labeling of $P$ by objects of a category $\mc{C}$, and let $\{F_{y,x}\}_{x\lessdot y\in C(P)}$ be a labeling of $C(P)$ by morphims in $\mc{C}$, where $F_{y,x}:F_x\to F_y$. Then equations \ref{functone} and \ref{functtwo} determine a functor if and only if this labeling commutes on every diamond in $P$.
\label{dtfunctor}
\end{theorem}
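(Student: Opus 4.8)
The plan is to dispatch the ``only if'' direction immediately and spend the real effort on the ``if'' direction, whose content is the well-definedness of equation \ref{functtwo}. For the forward direction, observe that the two sides $x\lessdot a\lessdot y$ and $x\lessdot b\lessdot y$ of a diamond $\{x,a,b,y\}$ are two directed paths from $x$ to $y$ in the Hasse diagram, so if $F$ is a functor then $F(x\leq y)$ computed along either path agrees; that is precisely the statement that the labeling commutes on that diamond.

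For the converse, assume the labeling commutes on every diamond. The first and main step is: \emph{for every $x\leq y$ in $P$ and every two saturated chains $C,C'$ from $x$ to $y$, the composite of the edge-morphisms along $C$ equals the composite along $C'$.} I would prove this by first establishing the one-move version: if $d$ is a diamond of $P$ and $C$ is a saturated chain, then the composite along $dC$ equals the composite along $C$. Indeed, if $d$ acts trivially ($dC=C$) there is nothing to prove; otherwise $C$ contains exactly one side $u\lessdot a\lessdot v$ of $d=\{u,a,b,v\}$ as a contiguous sub-chain, and $dC$ replaces this sub-chain by the other side $u\lessdot b\lessdot v$ while leaving the portions of $C$ below $u$ and above $v$ untouched. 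Hence the composite along $C$ factors as $(\text{part above }v)\circ(F_{v,a}\circ F_{a,u})\circ(\text{part below }u)$ and the composite along $dC$ as $(\text{part above }v)\circ(F_{v,b}\circ F_{b,u})\circ(\text{part below }u)$; since the labeling commutes on $d$, the two middle factors coincide, so the two composites are equal. Now, given $x\leq y$ and saturated chains $C,C'$ from $x$ to $y$: these are exactly the maximal chains of the interval $[x,y]$, so by diamond transitivity of $P$ there is a word in $D(P)$, i.e.\ a sequence of diamond moves, carrying $C$ to $C'$; each intermediate chain is again a saturated chain of $P$, and a move that nontrivially alters a chain contained in $[x,y]$ necessarily uses a diamond whose four elements all lie in $[x,y]$, so the one-move claim applies at every step and the composite is preserved throughout. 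This yields well-definedness of \ref{functtwo}.

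Once \ref{functtwo} is well-defined, the functor axioms are routine: the trivial chain $x$ gives $F(x\leq x)=\id_{F_x}$, and for $x\leq y\leq z$ one concatenates a saturated chain from $x$ to $y$ with one from $y$ to $z$ to obtain a saturated chain from $x$ to $z$, so $F(x\leq z)=F(y\leq z)\circ F(x\leq y)$ by the independence of the chosen chain just established. Hence equations \ref{functone} and \ref{functtwo} define a functor $F:P\to\mc{C}$.

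I expect the only subtle point to be the bookkeeping in the one-move claim --- checking that a nontrivial diamond move modifies the chain only in a single length-two window, and that the replaced diamond's four elements indeed lie in $[x,y]$ --- but this is immediate from the definition of the diamond action together with the fact that intervals of length two inside $[x,y]$ are intervals of length two, hence diamonds, in $P$. The conceptual weight of the theorem is carried entirely by the notion of diamond transitivity set up earlier; once that is in hand, the proof is a short induction on the number of diamond moves.
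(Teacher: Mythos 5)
Your proposal is correct and takes essentially the same approach as the paper: both reduce well-definedness of equation \ref{functtwo} to the single-diamond-move case and then invoke diamond transitivity to pass between arbitrary saturated chains by induction on the number of moves. Your write-up is somewhat more explicit than the paper's (spelling out the one-move factorization, the fact that intermediate chains stay in $[x,y]$, the trivial ``only if'' direction, and the verification of the functor axioms, the last of which the paper delegates to the discussion preceding the theorem), but the underlying argument is the same.
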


%\begin{theorem}
%Let $P$ be a thin poset. If $P$ is diamond transitive, then any labeling of nodes and edges of the Hasse diagram of $P$ by objects and morphisms of an abelian category $\mc{A}$ which commutes on diamonds determines a functor from $P$ to $\mc{A}$.  
%\label{dtfunctor}
%\end{theorem}

\begin{proof}
Let $C$ and $C^\prime$ denote two saturated chains from $x$ to $y$ where $x\leq y$ in $P$. We proceed by induction on the minimal number of diamond moves needed to take $C$ to $C^\prime$. For the base case, if no diamond moves are needed, then $C=C^\prime$ and we are done. If a positive number of diamond moves are needed, take a minimal word $w=w_1\dots w_k\in D(P)$ such that $wC=C^\prime$. Set $C^{\prime\prime}=w_kC$ and note $(w_1\dots w_{k-1})C^{\prime\prime}=C$. Compositions along $C$ and along $w_kC=C^{\prime\prime}$ agree since we have commutativity on diamonds. The minimum number of diamond moves needed to take $C^{\prime\prime}$ to $C^\prime$ is less than $k$ so by induction compositions along $C^{\prime\prime}$ and $C^\prime$ agree.
\end{proof}

We now define the cochain complex associated to a functor on a thin poset, which Lemma \ref{welldef} shows is well defined.

\begin{defn}\label{thinposethomologydef}
Let $\mc{A}$ be an abelian category, $P$ a finite thin poset with balanced coloring $c$, and $F:P\to\mc{A}$ a covariant functor. Define a cochain complex, denoted $C^*(P,\mc{A},F,c)$ with differential $\delta$ given by the formulas
\begin{equation}
    C^k(P,\mc{A},F,c)=\bigoplus_{\rk(x)=k}F(x)\ \ \ \ \ \ \ \ \ \delta^k=\mathop{\sum_{x\lessdot y }}_{\rk(x)=k} c(x\lessdot y)F(x\lessdot y).
    \label{thincohomdef}
\end{equation}
%In the case that $F$ is contravariant, we define a chain complex, denoted $C_*(P,F,c)$ with differential $\partial$ given by the formulas
%\begin{equation}
    %C_k(P,\mc{A},F,c)=\bigoplus_{\rk(x)=k}F(x)\ \ \ \ \ \ \ \ \ \partial_k=\mathop{\sum_{y\lessdot x}}_{\rk(x)=k} c(y\lessdot x)F(y\lessdot x).
    %\label{thinhomdef}
%\end{equation}
Denote the cohomology of $C^*(P,\mc{A},F,c)$ by $H^*(P,\mc{A},F,c)$. 
\end{defn}

\begin{remark}
In Definition \ref{thinposethomologydef}, morphisms from $C^k$ to $C^{k+1}$ can be identified as matrices with columns indexed by elements in $P$ of rank $k$ and rows indexes by elements in $P$ of rank $k+1$, where the matrix element indexed by $x$ and $y$ is a morphism $F(x)\to F(y)$. Then $F(x\lessdot y)$ can be identified as the matrix with all 0's except for the morphism $F(x\lessdot y)$ in the matrix element indexed by $x$ and $y$ and $\delta^k$ is the (signed) sum of such matrices.
\end{remark}

\begin{remark}
\label{notationalrmk}
Theorem \ref{balindept} shows that for diamond transitive posets with $\hat{0}$, this cohomology does not depend on the balanced colorings, so we may instead write $H^*(P,\mc{A},F)$. In fact, since the functor $F$ contains the data of $P$ and $\mc{A}$, we may choose to simply write $C^*(F,c)$ and $H^*(F)$. However, due to the categorical structure introduced in Section \ref{categoryofpoco}, we will often use the full data $(P,\mc{A},F,c)$ in our notation.
\end{remark}
%If we wish to not distinguish between $F$ being covariant/contravariant, or when it is clear from context, we may write expressions like $C(P,F,c)$ and $H(P,F,c)$ when we wish to make statements about both cases. To avoid the issue of balanced colorings, one may choose to work in a $\Z_2$-linear category where $1=-1$.
%Let $\ztgVect$ denote the category of graded $\Z_2$ vector spaces and degree 0 linear maps. 
%In the case $\mc{A}=\ztgVect$, we have $1=-1$, and we get anticommutivity on diamonds for free so balanced colorings are not necessary. 

\begin{lemma} Let $P$ be a thin poset with balanced coloring $c$, and let $F:P\to\mc{A}$ be a functor, where $\mc{A}$ is an abelian category. Then $C^*(P,\mc{A},F,c)$ is a cochain complex.  \label{welldef} 
\end{lemma}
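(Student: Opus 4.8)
The plan is to show $\delta^{k+1}\circ\delta^k=0$ for all $k$, which is the only thing that needs checking. Expanding the composition using the formula in \eqref{thincohomdef}, we get
\[
\delta^{k+1}\circ\delta^k=\mathop{\sum_{x\lessdot z}}_{\rk(x)=k}\ \mathop{\sum_{z\lessdot y}}_{} c(z\lessdot y)\,c(x\lessdot z)\,F(z\lessdot y)\circ F(x\lessdot z),
\]
so the matrix entry of this composite indexed by a pair $x\lessdot\lessdot y$ with $\rk(y)=\rk(x)+2$ is a sum over the elements $z$ with $x\lessdot z\lessdot y$. Since $P$ is thin, the interval $[x,y]$ is a diamond, so there are exactly two such $z$, say $a$ and $b$. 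Hence the entry equals
\[
c(a\lessdot y)c(x\lessdot a)\,F(a\lessdot y)\circ F(x\lessdot a)\;+\;c(b\lessdot y)c(x\lessdot b)\,F(b\lessdot y)\circ F(x\lessdot b).
\]

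Now I would invoke the two defining hypotheses. Because $F$ is a functor on $P$ (viewed as a category), composition along the two saturated chains $x\lessdot a\lessdot y$ and $x\lessdot b\lessdot y$ in the diamond $[x,y]$ must agree, i.e. $F(a\lessdot y)\circ F(x\lessdot a)=F(b\lessdot y)\circ F(x\lessdot b)=F(x\leq y)$. Because $c$ is a balanced coloring (Definition \ref{balanceddef}), the diamond $\{x,a,b,y\}$ carries an odd number of $-1$'s among its four edges $x\lessdot a$, $a\lessdot y$, $x\lessdot b$, $b\lessdot y$, so the product of all four colors is $-1$; equivalently $c(x\lessdot a)c(a\lessdot y)=-\,c(x\lessdot b)c(b\lessdot y)$. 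Substituting both facts, the two terms in the displayed sum cancel, so every matrix entry of $\delta^{k+1}\circ\delta^k$ vanishes, hence $\delta^{k+1}\circ\delta^k=0$.

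There is essentially no obstacle here: the statement is a direct local computation on a single diamond, and it uses only thinness (to know each length-$2$ interval has exactly the two intermediate elements $a,b$), the functoriality of $F$ (to equate the two composites), and the balancedness of $c$ (to make the signs opposite). I would, however, add a remark that the argument does not need diamond transitivity — it is purely about diamonds — which is consistent with the statement of Lemma \ref{welldef} being phrased for an arbitrary thin poset $P$, in contrast to the functoriality criterion of Theorem \ref{dtfunctor}. If one prefers a basis-free phrasing, the same cancellation can be written as $\delta^{k+1}\circ\delta^k$ restricted to the summand $F(x)$ landing in $F(y)$, using that the only nonzero matrix entries of $\delta$ correspond to cover relations; the diamond condition guarantees exactly two contributing paths and the two conditions above guarantee they cancel.
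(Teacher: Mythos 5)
Your proof is correct and follows the same route as the paper's: identify the matrix entry of $\delta^2$ indexed by $x$ and an element two ranks up, use thinness to see the interval is a diamond $\{x,a,b,y\}$, use functoriality of $F$ to equate the two composites, and use balancedness of $c$ to make the two signs opposite. The only cosmetic difference is that the paper factors out the common composite and observes the scalar $c(a\lessdot z)c(x\lessdot a)+c(b\lessdot z)c(x\lessdot b)$ vanishes, whereas you phrase the same cancellation term by term; your observation that diamond transitivity plays no role here is also correct and worth keeping.
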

\begin{proof} Suppose $F$ is covariant. Given $A\in F(x)$, with $\rk(x)=k$, consider the component $\delta^2_{x,z}(A)$ of $\delta^2(A)$ in $F(z)$ where $\rk(z)=\rk(x)+2$. If  $\delta^2_{x,z}(A)$ were nonzero, then by construction we must have $z\geq x$, in which case the interval $[x,z]=\{x,a,b,z\}$ is a diamond. Thus, we have 
\begin{align*}
\delta^2_{x,z}&=c(a\lessdot z)F(a\lessdot z)\circ c(x\lessdot a)F(x\lessdot a)
+c(b\lessdot z)F(b\lessdot z)\circ c(x\lessdot b)F(x\lessdot b)\\
&=c(a\lessdot z)c(x\lessdot a)\big[F(a\lessdot z)\circ F(x\lessdot a)\big]
+c(b\lessdot z)c(x\lessdot b)\big[F(b\lessdot z)\circ F(x\lessdot b)\big]\\
%&=c(a\lessdot z)c(x\lessdot a)F(a\lessdot z)\circ F(x\lessdot a)
%+c(a\lessdot z)c(x\lessdot b)F(a\lessdot z)\circ F(x\lessdot b)\\
&=[c(a\lessdot z)c(x\lessdot a)+c(b\lessdot z)c(x\lessdot b)]\big[F(a\lessdot z)\circ F(x\lessdot a)\big]\\
&=0.\qedhere
\end{align*}
\end{proof}

\begin{theorem}
Let $P$ be a diamond transitive thin poset with $\hat{0}$ and with balanced colorings $c_1,c_2\in\Phi^b$. Let $\mc{A}$ be an abelian category and $F:P\to\mc{A}$ a functor. Then $A\mapsto \gr^{c_1c_2}(x)A$, where $A\in F(x)$ defines a (natural) isomorphism of complexes $C^*(P,\mc{A},F,c_1)\to C^*(P,\mc{A},F,c_2)$.
\label{balindept}
\end{theorem}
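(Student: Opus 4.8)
The plan is to write down the asserted map $\Psi\colon C^*(P,\mc{A},F,c_1)\to C^*(P,\mc{A},F,c_2)$ explicitly, degree by degree, and then check two things: that it is an isomorphism of objects in each degree, and that it is a chain map. Since $c_1,c_2\in\Phi^b$, Lemma \ref{phic} gives $c_1c_2\in\Phi^c$; and since $P$ is diamond transitive with $\hat 0$, Proposition \ref{greedyprop} produces the function $\gr^{c_1c_2}\colon P\to\{1,-1\}$ with $\delta(\gr^{c_1c_2})=c_1c_2$, i.e. $\gr^{c_1c_2}(x)\,\gr^{c_1c_2}(y)=c_1(x\lessdot y)\,c_2(x\lessdot y)$ for every cover relation $x\lessdot y$. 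In cohomological degree $k$ I would define $\Psi^k$ to be the direct sum, over all $x$ with $\rk(x)=k$, of the maps $F(x)\to F(x)$ given by $A\mapsto \gr^{c_1c_2}(x)\,A$.

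First I would observe that each $\Psi^k$ is an isomorphism: on the summand $F(x)$ it is multiplication by the unit $\gr^{c_1c_2}(x)\in\{1,-1\}$, hence invertible with inverse again multiplication by $\gr^{c_1c_2}(x)$ (as $(\pm 1)^2=1$); taking the direct sum, $\Psi^k$ is an isomorphism in $\mc{A}$ which is its own inverse.

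Next I would verify the chain-map identity $\Psi^{k+1}\circ\delta_1^k=\delta_2^k\circ\Psi^k$, where $\delta_i$ denotes the differential of $C^*(P,\mc{A},F,c_i)$. Using the matrix/component description of the differential from the remark following Definition \ref{thinposethomologydef}, it is enough to compare $(x,y)$-components for each cover relation $x\lessdot y$ with $\rk(x)=k$: the left-hand side contributes $\gr^{c_1c_2}(y)\,c_1(x\lessdot y)\,F(x\lessdot y)$ and the right-hand side contributes $\gr^{c_1c_2}(x)\,c_2(x\lessdot y)\,F(x\lessdot y)$ (the scalar from $\Psi^k$ commuting past $F(x\lessdot y)$). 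These coincide because, using $\delta(\gr^{c_1c_2})=c_1c_2$ together with the fact that every value lies in $\{1,-1\}$ and is thus its own inverse, $\gr^{c_1c_2}(y)\,c_1(x\lessdot y)=\gr^{c_1c_2}(x)^{-1}\big(c_1(x\lessdot y)c_2(x\lessdot y)\big)c_1(x\lessdot y)=\gr^{c_1c_2}(x)\,c_2(x\lessdot y)$. Hence $\Psi$ is an isomorphism of complexes.

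There is no serious obstacle here: the only care required is bookkeeping of which scalar multiplies which summand and the elementary sign manipulation above, and the one substantive input — existence of $\gr^{c_1c_2}$ with $\delta(\gr^{c_1c_2})=c_1c_2$ — is exactly where diamond transitivity and the hypothesis $\hat 0\in P$ enter, and has already been supplied by Proposition \ref{greedyprop}. The claimed naturality of these isomorphisms is a separate matter and, as indicated in the introduction, is established later as Lemma \ref{naturality}, so I would not address it in this proof.
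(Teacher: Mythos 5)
Your proof is correct and takes essentially the same route as the paper's: define the map on each summand $F(x)$ as multiplication by $\gr^{c_1c_2}(x)$ (invoking Lemma \ref{phic} to ensure $c_1c_2\in\Phi^c$ and Proposition \ref{greedyprop} to get $\delta(\gr^{c_1c_2})=c_1c_2$), then verify the chain-map identity componentwise by the same elementary sign manipulation, deferring naturality to Lemma \ref{naturality}. You streamline slightly by going straight to Proposition \ref{greedyprop} rather than re-deriving the coboundary statement via $H^1(X(P),\Z_2)=0$ as the paper does, and you make explicit that the map is an isomorphism because it is its own inverse, a point the paper leaves implicit.
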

\begin{proof}
Consider the cochain complex $(C^*(X(P),Z_2),\delta)$. Notice that $\ker\delta^1=\Phi^c$ under the identification of $\{1,-1\}$ with $\Z_2=\{0,1\}$. If $P$ is diamond transitive, then $H_1(X(P),\Z_2)=0$ by Proposition \ref{dthomology} and thus $H^1(X(P),\Z_2)=\Hom(H_1(X(P),\Z_2),\Z_2)=0$. Therefore any $c\in\Phi^c$ is a coboundary. Let $c_1,c_2\in\Phi^b$. By Lemma \ref{phicaction} there is some $d\in\Phi^c$ such that $c_1d=c_2$ (in particular, $d=c_1c_2$). Using Proposition \ref{greedyprop}, we have a canonical function $\gr^d:P\to\Z_2$ such that $d=\delta \gr^d$. Note: under the identification of $\Z_2$ with $\{1,-1\}$ this means that $d(x\lessdot y)=\gr^d(x)\gr^d(y)$. Define a chain map from $C^*(P,\mc{A},F,c_1)\to C^*(P,\mc{A},F,c_2)$ with components  $\gr^d(x)1_{F(x)}:F(x)\to F(x)$ for each $x\in P$. In order to check this is a chain map, we must have, for any $x\lessdot y$, $c_2(x\lessdot y)F(x\lessdot y)\gr^d(x)1_{F(x)}=\gr^d(y)1_{F(y)}c_1(x\lessdot y)F(x\lessdot y)$, or equivalently, $c_2(x\lessdot y)\gr^d(x)=\gr^d(y)c_1(x\lessdot y)$. But $c_2=dc_1$, so 
\begin{align*}
    c_2(x\lessdot y)\gr^d(x)&=d(x\lessdot y)c_1(x\lessdot y)\gr^d(x)\\
    &=\gr^d(x)\gr^d(y)c_1(x\lessdot y)\gr^d(x)\\
    &=\gr^d(y)c_1(x\lessdot y). 
\end{align*}
Naturality follows from Lemma \ref{naturality}.
\end{proof}
%
%Because of this result, we may often choose to omit the $c$ in $H^*(P,F,c)$ and simply write $H^*(P,F)$. 
%\begin{remark}
%Theorem \ref{cwbalanced}, Proposition \ref{cwdt} and Theorem \ref{balindept} together indicate that CW posets have all of the nicest properties we could hope for in the construction of homology theories. 
%\end{remark}

Suppose $F:P\to\mc{A}$ is a covariant functor from a diamond transitive thin poset $P$ to an abelian category $\mc{A}$. Suppose that $P$  has a balanced coloring $c:C(P)\to\{1,-1\}$. If $I$ is an upper or lower order ideal, then $I$ is a thin poset, the restriction $c|_{C(I)}$ is a balanced coloring, and the restriction $F|_I:I\to\mc{A}$ is a functor. If $I$ is an upper order ideal, then $C^*(I,\mc{A},F|_I,c|_I)$ is a subcomplex of $C^*(P,\mc{A},F,c)$, or equivalently,  the inclusion $C^*(I,\mc{A},F|_I,c|_I)\hookrightarrow C^*(P,\mc{A},F,c)$ is a chain map.  

%We conclude this section with some basic computational tools for homology theories $H(P,F)$.  
%In Section \ref{categoryofpoco} we will need to assume our posets are CW in order to show that passing to cohomology is functorial. 

\begin{theorem}
Suppose $P$ is a diamond transitive balanced colorable thin poset, and $F:P\to\mc{A}$ is a functor to an abelian category $\mc{A}$. Given an upper order ideal $I$ in $P$, there is a long exact sequence
\[\dots H^*(I,\mc{A},F)\to H^*(P,\mc{A},F)\to H^*(P\setminus I,\mc{A},F)\xrightarrow{d} H^*(I,\mc{A},F)\dots\] 
where the map $d$ has degree 1.
\label{pocoLES}
\end{theorem}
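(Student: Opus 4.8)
The plan is to realize the three complexes involved as a short exact sequence of cochain complexes in $\mc{A}$ and then invoke the standard long exact sequence in cohomology. To begin, I would fix a balanced coloring $c$ of $P$. Since $I$ is an upper order ideal, its complement $P\setminus I$ is a lower order ideal, and, as recorded in the paragraph preceding the statement, $I$ and $P\setminus I$ are again thin posets, the restrictions $c|_{C(I)}$ and $c|_{C(P\setminus I)}$ are balanced colorings, and $F|_I$ and $F|_{P\setminus I}$ are functors; thus all three complexes $C^*(I,\mc{A},F,c|_I)$, $C^*(P,\mc{A},F,c)$, and $C^*(P\setminus I,\mc{A},F,c|_{P\setminus I})$ are defined, and are genuine cochain complexes by Lemma \ref{welldef}.

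Next I would record that the inclusion of summands exhibits $C^*(I,\mc{A},F,c|_I)$ as a subcomplex of $C^*(P,\mc{A},F,c)$ — this is already observed just before the statement, and holds because $I$ being an upper order ideal forces $y\in I$ whenever $x\in I$ and $x\lessdot y$, so $\delta$ preserves $\bigoplus_{\rk(x)=k,\,x\in I}F(x)$, while the cover relations of $P$ with both endpoints in $I$ are exactly the cover relations of the induced subposet $I$. Then I would identify the quotient complex in degree $k$ as
\[
C^k(P,\mc{A},F,c)/C^k(I,\mc{A},F,c|_I)=\bigoplus_{\rk(x)=k,\;x\in P\setminus I}F(x)=C^k\!\left(P\setminus I,\mc{A},F,c|_{P\setminus I}\right),
\]
and check that the induced differential agrees with the differential of $C^*(P\setminus I,\dots)$: for $x\in P\setminus I$ the summands $c(x\lessdot y)F(x\lessdot y)$ with $y\in I$ are sent to $0$ in the quotient, while for $y\in P\setminus I$ the relation $x\lessdot y$ in $P$ is precisely a cover relation of the induced subposet $P\setminus I$ (if $x<z<y$ in $P$ then $z<y$ forces $z\in P\setminus I$ since $P\setminus I$ is a lower order ideal, and conversely nothing strictly between $x$ and $y$ inside $P\setminus I$ is avoided in $P$), and the restricted coloring assigns it the same sign. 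This produces a short exact sequence of cochain complexes in $\mc{A}$,
\[
0\longrightarrow C^*(I,\mc{A},F,c|_I)\longrightarrow C^*(P,\mc{A},F,c)\longrightarrow C^*\!\left(P\setminus I,\mc{A},F,c|_{P\setminus I}\right)\longrightarrow 0.
\]

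Finally I would apply the zig-zag lemma — the long exact sequence in cohomology attached to a short exact sequence of cochain complexes, which holds in any abelian target category $\mc{A}$ (via the snake lemma) — to obtain the asserted long exact sequence with connecting homomorphism $d$ of degree $+1$; and I would invoke Theorem \ref{balindept} to conclude that the cohomology of each complex is, up to natural isomorphism, independent of the chosen balanced coloring, which is what lets us suppress $c$, $c|_I$, and $c|_{P\setminus I}$ from the notation and write
\[
\cdots\longrightarrow H^*(I,\mc{A},F)\longrightarrow H^*(P,\mc{A},F)\longrightarrow H^*(P\setminus I,\mc{A},F)\xrightarrow{\ d\ } H^{*+1}(I,\mc{A},F)\longrightarrow\cdots.
\]
I do not expect a serious obstacle: once the quotient complex is correctly identified the construction is entirely formal, so the only step requiring genuine care is that bookkeeping — matching the cover relations and the restricted sign assignment of $P\setminus I$ with those inherited from $P$ so that the quotient is literally $C^*(P\setminus I,\mc{A},F,c|_{P\setminus I})$ with its own differential.
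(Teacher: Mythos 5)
Your proof is correct and follows essentially the same route as the paper's: form the short exact sequence of cochain complexes $0\to C^*(I,\mc{A},F|_I,c|_I)\to C^*(P,\mc{A},F,c)\to C^*(P\setminus I,\mc{A},F|_{P\setminus I},c|_{P\setminus I})\to 0$ coming from the upper-ideal/lower-ideal decomposition, then take the associated long exact sequence in cohomology and invoke Theorem \ref{balindept} to drop the colorings from the notation. Your more explicit verification that the quotient complex, with its induced differential, is literally $C^*(P\setminus I,\mc{A},F|_{P\setminus I},c|_{P\setminus I})$ simply spells out what the paper summarizes as a direct-sum decomposition of graded objects.
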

\begin{proof}
Let $c$ be a balanced coloring of $P$. Since $I$ is an upper order ideal, $P\setminus I$ is a lower order ideal, and  $C^*(P,\mc{A},F,c)$ decomposes as a direct sum $C^*(P,\mc{A},F,c)=C^*(I,\mc{A},F|_I,c|_I)\oplus C^*(P\setminus I,\mc{A},F|_{P\setminus I},c|_{P\setminus I})$ where $C^*(I,\mc{A},F|_I,c|_I)$ is a subcomplex of $C^*(P,\mc{A},F,c)$. Thus we get a short exact sequence of complexes
\[
0
\to C^*(I,\mc{A},F|_I,c|_I)
\rightarrow C^*(P,\mc{A},F,c)
\rightarrow C^*(P\setminus I,\mc{A},F|_{P\setminus I},c|_{P\setminus I})
\to 0
\]
leading to the desired long exact sequence on cohomology.
\end{proof}

\section{Functoriality of Cohomology}\label{categoryofpoco}

In Section \ref{functorsonposets}, we considered functors on thin posets $P$ with balanced colorings $c$ and defined cochain complexes $C^*(P,\mc{A},F,c)$  associated to covariant  functors $F:P\to\mc{A}$ where $\mc{A}$ is an abelian category. 
In this section, we consider a categorical structure on tuples $(P,\mc{A},F,c)$, under which passing to cohomology is functorial.
In order for induced maps on complexes to be defined, we must restrict the types of maps between posets we consider.

\begin{defn}
Let $f:P\to Q$ be a map of posets. Then $f$ is called an \textit{order embedding} if for any $x,y\in P$, $x\leq y$ if and only if $f(x)\leq f(y)$. An order embedding \textit{preserves cover relations} if $x\lessdot y$ implies $f(x)\lessdot f(y)$. We may also say in this case that $f$ is a \textit{cover preserving order embedding}.
\label{CPOE}
\end{defn}

Cover preserving order embeddings were considered by Wild in \cite{wild1992cover} where they give necessary and sufficient conditions guaranteeing cover preserving order embeddings into Boolean lattices. It is an immediate consequence of $f:P\to Q$ being an order embedding that $f$ is injective, and thus $f$ is an isomorphism onto $f(P)$.

%\ACcom{use this paper of Wild to find examples of interesting morphisms in PoCo and the corresponding maps on homology. Start very basic with perhaps identity functors. Can also use this to find examples of balanced colorings by finding an embedding to a boolean lattice and then restricting the coloring.}

\begin{lemma}
Suppose $f:P\to Q$ is a cover preserving order embedding. Then $x\lessdot y$ if and only if $f(x)\lessdot f(y)$.
\end{lemma}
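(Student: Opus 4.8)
The forward implication is precisely the defining property of a cover preserving order embedding recorded in Definition \ref{CPOE}, so the entire content of the statement is the converse: assuming $f(x)\lessdot f(y)$, one must deduce $x\lessdot y$. The plan is therefore to fix such $x,y$ and argue in two steps: first that $x\leq y$, and then that nothing lies strictly between them.

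For the first step I would use that $f$ is an order embedding in the full ``if and only if'' sense: since $f(x)\lessdot f(y)$ gives in particular $f(x)\leq f(y)$, the reverse implication in the definition of order embedding yields $x\leq y$. For the second step, suppose towards a contradiction that there is some $z\in P$ with $x<z<y$. Invoking that an order embedding is injective (the observation immediately following Definition \ref{CPOE}), the relation $x<z$ gives $f(x)\leq f(z)$ together with $f(x)\neq f(z)$, hence $f(x)<f(z)$; symmetrically $f(z)<f(y)$. Then $f(z)$ is an element strictly between $f(x)$ and $f(y)$, contradicting $f(x)\lessdot f(y)$. Thus no such $z$ exists, and $x\lessdot y$.

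There is no serious obstacle here; the only subtlety worth flagging is the bookkeeping that turns the non-strict inequalities supplied by the order-embedding property into strict ones, which is exactly where injectivity of $f$ enters. Everything else is a direct unwinding of the definitions of ``cover relation'' and ``order embedding.''
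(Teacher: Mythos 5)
Your proposal is correct and follows essentially the same route as the paper: use the order-embedding property to get $x\leq y$, then suppose $x<z<y$ and show $f(z)$ would contradict $f(x)\lessdot f(y)$ via injectivity. The only cosmetic difference is that you upgrade to strict inequalities in $Q$ before invoking the cover relation, whereas the paper keeps the inequalities non-strict and derives the contradiction by forcing $f(z)\in\{f(x),f(y)\}$; the underlying argument is the same.
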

\begin{proof}
Suppose $f(x)\lessdot f(y)$ in $Q$. Since $f$ is an isomorphism from $P$ to $f(P)$, we know that $f^{-1}$ is order preserving, so we must have $x\leq y$. Suppose there exists $z$ with $x<z<y$. Then we have $f(x)\leq f(z)\leq f(y)$ but $f(x)\lessdot f(y)$ so we must have $f(z)=f(x)$ or $f(z)=f(y)$ contradicting injectivity of $f$. 
\end{proof}

Given a cover preserving order embedding $\phi:P\to Q$, each cover relation $a\lessdot b$ in the image of $P$ has a unique preimage in $P$, which we denote $\phi^{-1}(a\lessdot b)$. Given a coloring $c\in\Phi_P$ (that is, $c:C(P)\to \{1,-1\}$), let $\phi(c)$ denote the coloring on $\phi(P)\subseteq Q$ defined by $\phi(c)(a\lessdot b)=c(\phi^{-1}(a\lessdot b))$. Given a coloring $d\in\Phi_Q$, define the coloring $\phi^{-1}(d)$ on $P$ by $\phi^{-1}(d)(a\lessdot b)=d\phi(a\lessdot b)$. In either case we are really just moving the coloring from $P$ to the copy of $P$ sitting inside $Q$ or vice versa. We can also restrict $d$ to $\phi(P)$ which we shall denote by $d|_{\phi(P)}$. These satisfy the following obvious properties, proofs of which are left to the reader.
\begin{lemma}\label{colorops}
Given cover preserving order embeddings $\phi:P\to Q$, $\psi:Q\to R$ and colorings $c,d\in\Phi_P$, $e,f\in\Phi_Q$, $g\in\Phi_R$, we have 
\begin{multicols}{2}
\begin{enumerate}
    \item $\phi(cd)=\phi(c)\phi(d)$
    \item $\phi^{-1}(ef)=\phi^{-1}(e)\phi^{-1}(f)$
    \item $(\psi\circ\phi)(c)=\psi(\phi(c))$
    \item $\phi^{-1}(\psi^{-1}(g))=(\psi\circ\phi)^{-1}(g)$
    \item $c\in\Phi_P^b$ if and only if $\phi(c)\in\Phi_{\phi(P)}^b$
     \item $d\in\Phi_Q^b$ implies $\phi^{-1}(d)\in\Phi_{P}^b$
     \item $c\in\Phi_P^c$ if and only if $\phi(c)\in\Phi_{\phi(P)}^c$
     \item $d\in\Phi_Q^c$ implies $\phi^{-1}(d)\in\Phi_{P}^c$
\end{enumerate}
\end{multicols}
\end{lemma}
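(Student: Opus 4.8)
The plan is to verify each of the eight assertions by reducing to a pointwise check: items (1)--(4) are comparisons of functions on a set of cover relations, while items (5)--(8) are comparisons of products taken over a single diamond. The one structural input needed throughout is the observation recorded just above: since $\phi$ is a cover preserving order embedding, it restricts to an isomorphism of Hasse diagrams $P\xrightarrow{\sim}\phi(P)$, hence induces a bijection $C(P)\to C(\phi(P))$, and --- because a diamond is defined purely order-theoretically as a nonempty closed interval of length $2$ with four elements --- a bijection $d\mapsto\phi(d)$ between the diamonds of $P$ and those of $\phi(P)$.

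First I would dispatch (1)--(4). For (1), fix a cover relation $a\lessdot b$ in $\phi(P)$ with preimage $x\lessdot y$; then $\phi(cd)(a\lessdot b)=(cd)(x\lessdot y)=c(x\lessdot y)\,d(x\lessdot y)=\phi(c)(a\lessdot b)\,\phi(d)(a\lessdot b)$. Assertion (2) is the mirror computation: for $a\lessdot b$ in $P$, $\phi^{-1}(ef)(a\lessdot b)=(ef)(\phi(a)\lessdot\phi(b))=e(\phi(a)\lessdot\phi(b))\,f(\phi(a)\lessdot\phi(b))=\phi^{-1}(e)(a\lessdot b)\,\phi^{-1}(f)(a\lessdot b)$. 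For (3) one uses that the $(\psi\circ\phi)$-preimage of a cover relation in $\psi(\phi(P))$ is the $\phi$-preimage of its $\psi$-preimage; for (4), $\phi^{-1}(\psi^{-1}(g))(a\lessdot b)=\psi^{-1}(g)(\phi(a)\lessdot\phi(b))=g(\psi\phi(a)\lessdot\psi\phi(b))=(\psi\circ\phi)^{-1}(g)(a\lessdot b)$.

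For (5)--(8) I would argue at the level of a diamond $d=\{x,a,b,y\}$ in $P$ with edges $x\lessdot a,\ x\lessdot b,\ a\lessdot y,\ b\lessdot y$. Its image $\phi(d)=\{\phi(x),\phi(a),\phi(b),\phi(y)\}$ is a diamond in $\phi(P)$, and directly from the definitions the product of $\phi(c)$ over the four edges of $\phi(d)$ equals the product of $c$ over the four edges of $d$, and symmetrically the product of $\phi^{-1}(e)$ over the edges of $d$ equals the product of $e$ over the edges of $\phi(d)$. Since a coloring is balanced (resp. central) exactly when its product over every diamond is $-1$ (resp. $+1$), and since $d\mapsto\phi(d)$ is a bijection between the diamonds of $P$ and those of $\phi(P)$, (5) and (7) follow at once. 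For (6) and (8): if $e\in\Phi_Q$ has the prescribed product over every diamond of $Q$, it has it in particular over every diamond of the form $\phi(d)$, so $\phi^{-1}(e)$ has it over every diamond $d$ of $P$; this is merely an implication because $\phi(P)$ may be a proper subposet of $Q$, so the hypothesis on $\phi^{-1}(e)$ controls $e$ only on diamonds contained in $\phi(P)$.

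I anticipate no real obstacle: every step is a definitional unwinding once the diamond/cover-relation bijection between $P$ and $\phi(P)$ is in hand. The only point deserving an explicit sentence is the asymmetry between (5),(7) and (6),(8), which is precisely the failure of $\phi$ to be surjective onto $Q$.
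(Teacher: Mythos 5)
Your proof is correct, and since the paper explicitly leaves these verifications ``to the reader,'' there is no competing argument in the text; what you've written is exactly the intended definitional unwinding. The one substantive observation you need --- and correctly supply --- is that a cover preserving order embedding $\phi:P\to Q$ restricts to a poset isomorphism $P\xrightarrow{\sim}\phi(P)$ whose induced bijection on cover relations also induces a bijection on diamonds, and moreover (using thinness of $Q$) each diamond of $\phi(P)$ actually \emph{is} a diamond of $Q$, which is what lets you deduce (6) and (8) from the hypothesis on $d\in\Phi_Q$. Your explicit note on why (6) and (8) are only one-directional while (5) and (7) are equivalences is also exactly right: $\phi(P)$ may miss diamonds of $Q$, so the converse has no leverage.
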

%\ACcom{this could be stated in terms of some sort of group action or induced homomorphism on $\Phi$.}
\begin{defn}
    Define the category $\PoCo$ to have objects $(P,\mc{A},F,c)$ where $P$ is a diamond transitive poset, $\mc{A}$ is an abelian category, $F:P\to\mc{A}$ is a functor, and $c$ is a balanced coloring of $P$. 
    %\ACcom{perhaps relax to just additive e.g. Bar Natan complexes}. 
    Morphisms from $(P,\mc{A},F,c)$ to $(Q,\mc{B},G,d)$ are tuples $(\phi,Z,\eta,b)$ where $\phi:P\to Q$ is a cover preserving order embedding, $Z$ is an exact functor from $\mc{A}$ to $\mc{B}$, $\eta$ is a natural transformation from $ZF$ to $G\phi$, that is, a collection of morphisms $\{\eta_a:ZF(a)\to G\phi(a) \}$ indexed over $a\in P$ such that the following diagrams commute:
        \begin{center}
            \begin{tikzcd}
                ZF(a)\arrow[r,"\eta_a"]\arrow[d,"ZF(a\leq b)"']&G\phi(a)\arrow[d,"G\phi(a\leq b)"]\\
                ZF(b)\arrow[r,"\eta_b"']&G\phi(b)
            \end{tikzcd}
        \end{center}
    and $b$ is a central coloring of $P$ such that \begin{equation}
        \label{centralmorphism}
        \phi(cb)=d|_{\phi(P)}.
    \end{equation}
    Compose the morphisms $(\phi,Z,\eta,b):(P,\mc{A},F,c)\to (Q,\mc{B},G,d)$ and $(\psi,Y,\epsilon,b^\prime):(Q,\mc{B},G,d)\to (R,\mc{C},H,e)$ via 
    \[(\psi,Y,\epsilon,b^\prime)\circ (\phi,Z,\eta,b)=(\psi\circ \phi, Y\circ Z, (\epsilon \circ_h1_\phi)\circ_v(1_Y\circ_h\eta),\phi^{-1}(b^\prime)b).\]
    The identity morphism on the object $(P,\mc{A},F,c)$ is $(1_P,1_\mc{A},1_F,1):(P,\mc{A},F,c)\to (P,\mc{A},F,c)$ where $1_P:P\to P$ denotes the identity map, $1_\mc{A}$ denotes the identity functor, $1_F:F\to F$ denotes the identity natural transformation, and $1$ is the identity coloring. 
    %\ACcom{I´m not sure if this notation is consistent with the rest of the paper... 1 should denote identity, but maybe different cases for maps on posets, categories, and natural transformations... make a remark early on in this section to set up such notation... use $1_P$, $1_F$, and $\mathbbm{1}_\eta$. Or just use 1 for all and distinguish using the subscript. I think we'll stick with this last idea.}
\end{defn}

\begin{remark}
One should verify that the identity truly acts as identity with respect to this composition and that $\psi\phi(c\phi^{-1}(b^\prime)b)=e|_{\psi\phi(P)}$. Verifying associativity is somewhat less straightforward so we show that in detail below.
\end{remark}
%Notice that $\psi\circ \phi:P\to R$ is a cover preserving order embedding whenever $\psi$ and $\phi$ are. 
The symbols $\circ_h$ and $\circ_v$ denote the horizontal and vertical composition of natural transformations respectively (see \cite[Section 3.1]{wehrheim2016floer} or \cite[Section 1]{khovanov2010categorifications}). 
We will sometimes use the notation $\alpha:F\to G:\mc{A}\to\mc{B}$ to indicate that $\eta$ is a natural transformation between $F$ and $G$ which are functors between the categories $\mc{A}$ and $\mc{B}$.
Recall the formulas:
\[(\gamma\circ_v\alpha)_M=\gamma_M\circ\alpha_M \ \ \ \ \ \ \ \ \ (\beta\circ_h\alpha)_M=\beta_{G_1(M)}\circ F_2(\alpha_M)\]
where $\alpha:F_1\to G_1:\mc{A}\to\mc{B}$, $\beta:F_2\to G_2:\mc{B}\to\mc{C}$, and $\gamma:G_1\to H_1:\mc{A}\to\mc{B}$. 
Given morphisms
\begin{center}
\begin{tikzcd}
(P,\mc{A},F,c)\arrow[r,"{(\phi,Z,\eta,b)}"]&
(Q,\mc{B},G,d)\arrow[r,"{(\psi,Y,\varepsilon,b^\prime)}"]&
(R,\mc{C},H,e)\arrow[r,"{(\tau,X,\chi,b^{\prime\prime})}"]&
(S,\mc{D},I,f)
\end{tikzcd}
\end{center}
%associativity is immediate in the first two components. Let us denote the $i$th component in a tuple $(A,B,C,D)$ by $(A,B,C,D)_i$. In the third component, 
associating one way, we have 
\begin{align*}
&(\tau,X,\chi,b^{\prime\prime})\circ[(\psi,Y,\varepsilon,b^{\prime})\circ(\phi,Z,\eta,b)]\\
&=
\big(\tau,X,\chi,b^{\prime\prime})\circ(\psi\circ\phi,Y\circ Z,(\epsilon \circ_h1_\phi)\circ_v(1_Y\circ_h\eta),\phi^{-1}(b^\prime)b\big) \\
&=
\bigg(\tau\circ(\psi\circ\phi),X\circ (Y\circ Z),\\
&(\chi\circ_h1_{\psi\circ\phi})\circ_v\big(1_{X}\circ_h[(\epsilon \circ_h1_\phi)\circ_v(1_Y\circ_h\eta)]\big),(\psi\circ\phi)^{-1}(b^{\prime\prime})\phi^{-1}(b^\prime)b \bigg)
\end{align*}

and associating the other way, we have

\begin{align*}
&[(\tau,X,\chi,b^{\prime\prime})\circ(\psi,Y,\varepsilon,b^{\prime})]\circ(\phi,Z,\eta,b)\\
&=
\big(\tau\circ\psi,X\circ Y,(\chi\circ_h 1_\psi)\circ_v (1_X\circ_h\varepsilon),\psi^{-1}(b^{\prime\prime})b^\prime\big)\circ (\phi,Z,\eta,b)\\
&=
\bigg((\tau\circ\psi)\circ\phi,(X\circ Y)\circ Z,\\
&\big([(\chi\circ_h 1_\psi)\circ_v (1_X\circ_h\varepsilon)]\circ_h 1_\phi\big)\circ_v (1_{X\circ Y}\circ_h \eta, \phi^{-1}(\psi^{-1}(b^{\prime\prime})b^\prime)b
\bigg)
\end{align*}

Associativity in the third component follows from a straightforward computation  making use of the facts $1_{\psi\circ\phi}=1_\psi\circ_h1_\phi$ and $(\alpha\circ_h1_F)\circ_v(\beta\circ_h1_F)=(\alpha\circ_v\beta)\circ_h1_F$ for any functor $F$, and any natural transformations $\alpha,\beta$. Diagrammatically, either side of the equality is given in Figure \ref{assocfig}. Associativity in the fourth component follows from the properties listed in Lemma \ref{colorops}.

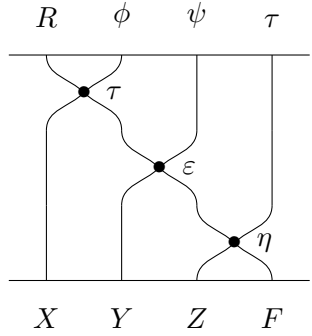
\begin{figure}
\centering
\begin{tikzpicture}
%\draw [help lines] (0,0) grid (4,3);
\node (R) at (.5,3){};
\node (phi) at (1.5,3){};
\node (psi) at (2.5,3){};
\node (tau) at (3.5,3){};
\node (X) at (.5,0){};
\node (Y) at (1.5,0){};
\node (Z) at (2.5,0){};
\node (F) at (3.5,0){};
\draw(0,0)--(4,0);
\draw (0,3)--(4,3);

\node[below=.1cm of X] {$X$};
\node[below=.1cm of Y] {$Y$};
\node[below=.1cm of Z] {$Z$};
\node[below=.1cm of F] {$F$};

\node[above=.1cm of R] {$R$};
\node[above=.1cm of tau] {$\tau$};
\node[above=.1cm of psi] {$\psi$};
\node[above=.1cm of phi] {$\phi$};

\node at (3,.5){\usebox\etatrans};
\draw (1.5,0)--(1.5,1);
\draw (.5,0)--(.5,1);

\draw (.5,1)--(.5,2);
\node at (2,1.5){\usebox\epsilontrans};
\draw (3.5,1)--(3.5,2);

\node at (1,2.5){\usebox\tautrans};
\draw (2.5,2)--(2.5,3);
\draw (3.5,2)--(3.5,3);
\end{tikzpicture}
\caption{The diagram shown above represents either way of associating a composition of three morphisms in $\PoCo$.}
\label{assocfig}
\end{figure}

\begin{remark}
The data of the central coloring $b$ is not really needed in the morphisms since it is determined by the corresponding balanced colorings. That is, given a morphism $(\phi,Z,\eta,b):(P,\mc{A},F,c)\to (Q,\mc{B},G,d)$, it follows that $b=c\phi^{-1}(d)$. This follows from simply by applying $\phi^{-1}$ to both sides equation (\ref{centralmorphism}) and then multiplying by $c$ using the fact that $c^2=1$.
\end{remark}

%\ACcom{it would be good to know (also a good exercise) to determine whether this category is abelian, etc... what is its grothendieck group... if so... is $K_0(\Cabel)$ isomorphic to $K_0(\Abel)$ via an Euler characteristic map? are these just product categories of all abelian categories slash---complexes over abelian categories?}

\begin{defn}
Define the category $\Abel$ whose objects are pairs $(G,\mc{A})$ where $\mc{A}$ is an abelian category and $G$ is an object in $\mc{A}$. Morphisms are pairs $(f,Z):(G,\mc{A})\to(H,\mc{B})$ where $Z:\mc{A}\to\mc{B}$ is a functor and $f:ZG\to H$ is a morphism in $\mc{B}$. Morphisms $(f,Z):(G,\mc{A})\to (H,\mc{B})$ and $(g,Y):(H,\mc{B})\to (K,\mc{C})$ are composed via $(g,Y)\circ (f,Z)=(gYf,YZ)$.
\end{defn}

\begin{defn}
Define the category $\Cabel$ whose objects are pairs $(C,\mc{A})$, where $C$ is a bounded cochain complex over the Abelian category $\mc{A}$. Morphisms are pairs $(f,Z):(C,\mc{A})\to (D,\mc{B})$ where $Z:\mc{A}\to\mc{B}$ is a functor, and $f:ZC\to D$ is a chain map. Morphisms $(f,Z):(C,\mc{A})\to (D,\mc{B})$ and $(g,Y):(D,\mc{B})\to (E,\mc{C})$ are composed via $(g,Y)\circ (f,Z)=(gYf,YZ)$.
\end{defn}

\begin{defn}
Define the category $\grAbel$ whose objects are pairs $(G,\mc{A})$ where $\mc{A}$ is an abelian category and $G$ is an object in $\mc{A}$ with a specified decomposition $G=\oplus_{i\in\Z}G^i$ where each $G^i$ is an object of $\mc{A}$. Morphisms are pairs $(f,Z):(G,\mc{A})\to(H,\mc{B})$ where $Z:\mc{A}\to\mc{B}$ is a functor and $f$ is a collection of morphisms $f^i:ZG^i\to H^i$ in $\mc{B}$ for $i\in\Z$. Morphisms $(f,Z):(G,\mc{A})\to (H,\mc{B})$ and $(g,Y):(H,\mc{B})\to (K,\mc{C})$ are composed via $(g,Y)\circ (f,Z)=(gYf,YZ)$ where $(gYf)^i=g^iYf^i$.
\end{defn}

\begin{lemma}
\label{homologyfunctor}
The association $(C,\mc{A})\mapsto (H^*(C),\mc{A})$, $(f,Z)\mapsto (H^*(f),Z)$ defines a functor from $\Cabel$ to $\grAbel$.
\end{lemma}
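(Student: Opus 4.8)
The plan is to verify the two functor axioms directly, since the statement is essentially a bookkeeping lemma about how taking cohomology interacts with the composition rule defined in the categories $\Cabel$ and $\grAbel$.

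First I would check that identities are preserved. The identity morphism on $(C,\mc{A})$ in $\Cabel$ is $(1_C,1_\mc{A})$, where $1_\mc{A}$ is the identity functor; applying the association sends this to $(H^*(1_C),1_\mc{A})$. Since $H^*$ is an additive functor on cochain complexes over a fixed abelian category, $H^*(1_C)=1_{H^*(C)}$, and this is exactly the identity morphism on $(H^*(C),\mc{A})$ in $\grAbel$. So the first axiom is immediate.

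Next I would check compatibility with composition. Take morphisms $(f,Z):(C,\mc{A})\to(D,\mc{B})$ and $(g,Y):(D,\mc{B})\to(E,\mc{C})$, composed in $\Cabel$ as $(g,Y)\circ(f,Z)=(gYf,YZ)$. Applying the association gives $(H^*(gYf),YZ)$. On the other side, applying the association first and then composing in $\grAbel$ gives $(H^*(g),Y)\circ(H^*(f),Z)=(H^*(g)\cdot Y H^*(f),YZ)$, where the $i$th component is $H^i(g)\circ Y H^i(f)$. Thus the only thing to verify is $H^i(gYf)=H^i(g)\circ YH^i(f)$ for each $i$. This follows from two standard facts about induced maps on cohomology: that $H^*$ respects composition of chain maps over a fixed abelian category (so $H^*(g\circ (Yf))=H^*(g)\circ H^*(Yf)$, noting $gYf$ means the chain map $g$ precomposed with $Yf:YZC\to YD$), and that for an additive (here exact, but additivity suffices) functor $Y:\mc{B}\to\mc{C}$ one has $H^*(Yf)=YH^*(f)$, since $Y$ commutes with taking kernels modulo images of additive maps in the relevant sense — more precisely $Y$ sends the subquotient computing $H^i(D)$ to the subquotient computing $H^i(YD)$ compatibly with the map induced by $f$. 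Here one uses that the functors $Z,Y$ in these categories can be taken to be exact (or at least additive), which is the hypothesis baked into $\PoCo$ and carried along; in the bare $\Cabel$ setting one should note that the morphisms are only required to have $ZC$ make sense as a complex, so additivity of $Z$ is implicitly assumed.

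The main obstacle, such as it is, is purely notational: one must carefully unwind what the symbol $gYf$ denotes as a chain map (it is $g\circ Y(f)$, a composite $YZC \xrightarrow{Y(f)} YD \xrightarrow{g} E$), and then confirm that the functoriality of $H^*$ over a single abelian category combines with the interchange of $H^*$ and the additive functor $Y$ in the expected way. No genuine difficulty arises; the lemma is a routine consequence of the elementary properties of cohomology of chain complexes, and I would present it as such, spelling out the componentwise identity $H^i(gYf)=H^i(g)\circ YH^i(f)$ and citing standard homological algebra for the two facts used.
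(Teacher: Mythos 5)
Your approach is essentially the same as the paper's: verify preservation of identities, then reduce compatibility with composition to the two facts that $H^*$ is a functor on chain maps over a fixed abelian category and that $H^*(Yf)=YH^*(f)$. The paper carries out exactly this reduction.

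However, your parenthetical claim that ``additivity suffices'' for the interchange $H^*(Yf)=YH^*(f)$ is wrong, and worth correcting since it is the one nontrivial step. Cohomology is a subquotient $\ker/\mathrm{im}$, and an additive functor need not preserve kernels or images. Concretely, take the two-term complex $\Z\xrightarrow{2}\Z$ over $\Z\textnormal{-mod}$ and apply the additive (but not exact) functor $Y=-\otimes_\Z\Z/2$: one has $H^1=\Z/2$ so $YH^1=0$, whereas $YC$ is $\Z/2\xrightarrow{0}\Z/2$ with $H^1(YC)=\Z/2\neq 0$. Exactness of $Y$ is genuinely needed, and this is precisely what the paper invokes (and what the definition of morphisms in $\PoCo$ builds in by requiring $Z$ exact). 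So your proof is correct once ``additivity suffices'' is struck and replaced by exactness; the remark that additivity is what makes $ZC$ a complex is fine and orthogonal to this point.
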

\begin{proof}
Given an identity morphism $(1_C,1_\mc{A})$ in $\Cabel$, the associated morphism in $\grAbel$ is $(H^*(1_C),1_\mc{A})=(1_C,1_\mc{A})$ by functoriality of taking homology. Given morphisms $(f,Z):(C,\mc{A})\to (D,\mc{B})$ and $(g,Y):(D,\mc{B})\to (E,\mc{C})$ in $\Cabel$, the morphisms associated to the composition $(gYf,YZ)$ is $(H^*(gYf),YZ)$. On the other hand, performing the composition $(H^*(g),Y)\circ (H^*(f),Z)$ in $\grAbel$, we obtain $(H^*(g)YH^*(f),YZ)$ so it suffices to show $H^*(g)YH^*(f)=H^*(gYf)$. By functoriality of $H^*$, we have $H^*(gYf)=H^*(g)H^*(Yf))$ and by exactness of $Y$ we have $H^*(Yf)=YH^*(f)$.
\end{proof}
 
Given a morphism $(\phi,Z,\eta,b):(P,\mc{A},F,c)\to (Q,\mc{B},G,d)$ in $\PoCo$, define the induced map $C^*(\phi,Z,\eta,b):ZC^*(P,\mc{A},F,c)\to C^*(Q,\mc{B},G,d)$ as follows. 
%Let $\phi(P)\cong P$ denote the image of $\phi$ in $Q$, and let $c_2|_{\phi(P)}$ denote the restriction of $c_2$ to $\phi(P)$. 
%Let $\phi(c_1)$ denote the coloring on $\phi(P)$ pushed forward from the coloring $c_1$ on $P$, i.e., given a cover relation $e=x\lessdot y$, define $\phi(c_1)(e)=c_1(\phi^{-1}(e))$. 
%Since $\phi(c_1)$ and $c_2$ are both balanced colorings of $\phi(P)$, it follows from Lemma \ref{phic} that 
%$\phi(c_1) c_2|_{\phi(P)}$ is a central coloring of $\phi(P)$. 
%Since $\phi(P)$ is diamond transitive, $\phi(c_1) c_2|_{\phi(P)}$ is the image of some function $f:\phi(P)\to\{1,-1\}$ under the codifferential $\delta$ on $H^*(X(\phi(P)),\Z_2)$. 
Given $A\in F(x)$ define 
    \begin{equation}
        C^*(\phi,Z,\eta,b)(ZA)=\gr^b(x)\eta_x(ZA).
        \label{chainmap}
    \end{equation}
Notice that $C^*(1_P,1_{\mc{A}},1_F,1)$ is exactly the isomorphism $C^*(P,F,c_1)\to C^*(P,F,c_2)$ defined in the proof of Theorem \ref{balindept}. 

\begin{lemma} As defined in equation \ref{chainmap}, $C^*(\phi,Z,\eta,b)$ is a chain map from $ZC^*(P,\mc{A},F,c)$ to $C^*(Q,\mc{B},G,d)$.\end{lemma}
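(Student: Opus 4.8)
The plan is to check directly that $C^*(\phi,Z,\eta,b)$ commutes with the differentials, reducing the verification to a single identity attached to each cover relation of $P$, and then to settle that identity using naturality of $\eta$ together with the two structural facts $\delta(\gr^b)=b$ (Proposition \ref{greedyprop}) and $\phi(cb)=d|_{\phi(P)}$ (the defining relation \ref{centralmorphism} of a $\PoCo$-morphism).

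First I would unwind the three objects involved. Since $Z$ is exact, hence additive, it commutes with finite biproducts and with sign changes, so $ZC^k(P,\mc{A},F,c)=\bigoplus_{\rk(x)=k}ZF(x)$ carries the differential $Z\delta^k_P=\sum_{x\lessdot y}c(x\lessdot y)\,ZF(x\lessdot y)$; likewise $C^k(Q,\mc{B},G,d)=\bigoplus_{\rk(q)=k}G(q)$ has differential $\delta^k_Q=\sum_{q\lessdot q'}d(q\lessdot q')\,G(q\lessdot q')$; and by equation \ref{chainmap} the degree-$k$ part of $C^*(\phi,Z,\eta,b)$ is the biproduct map $\bigoplus_{\rk(x)=k}\gr^b(x)\eta_x$, sending the summand $ZF(x)$ into $G(\phi(x))$. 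As all maps respect the biproduct decompositions, it suffices to check the chain-map square on each summand $ZF(x)$. Here I would use the lemma following Definition \ref{CPOE} ($\phi$ reflects as well as preserves cover relations), together with the fact that the image $\phi(P)$ is upward closed in $Q$, to identify the cover relations $\phi(x)\lessdot q'$ of $Q$ with exactly the $\phi(x)\lessdot\phi(y)$ for $x\lessdot y$ in $P$.

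Granting that, for $ZA\in ZF(x)$ one side of the square is
\[
\delta^k_Q\big(\gr^b(x)\,\eta_x(ZA)\big)=\sum_{x\lessdot y}\gr^b(x)\,d(\phi(x)\lessdot\phi(y))\,G(\phi(x)\lessdot\phi(y))\big(\eta_x(ZA)\big),
\]
with the $y$-summand landing in $G(\phi(y))$, and the other side is
\[
C^*(\phi,Z,\eta,b)\big(Z\delta^k_P(ZA)\big)=\sum_{x\lessdot y}c(x\lessdot y)\,\gr^b(y)\,\eta_y\big(ZF(x\lessdot y)(ZA)\big),
\]
again with $y$-summand in $G(\phi(y))$. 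Comparing summands, the claim reduces to the equality of morphisms $ZF(x)\to G(\phi(y))$
\[
\gr^b(x)\,d(\phi(x)\lessdot\phi(y))\cdot G(\phi(x)\lessdot\phi(y))\circ\eta_x=c(x\lessdot y)\,\gr^b(y)\cdot\eta_y\circ ZF(x\lessdot y)
\]
for every cover relation $x\lessdot y$. I would then handle the morphism part and the scalar part separately: the morphism part is precisely the naturality square of $\eta\colon ZF\Rightarrow G\phi$ over $x\leq y$, giving $G(\phi(x)\lessdot\phi(y))\circ\eta_x=\eta_y\circ ZF(x\lessdot y)$; and the scalar part follows since $\delta(\gr^b)=b$ gives $\gr^b(y)=b(x\lessdot y)\gr^b(x)$ (using $\gr^b(x)^2=1$), hence $c(x\lessdot y)\gr^b(y)=(cb)(x\lessdot y)\gr^b(x)=d(\phi(x)\lessdot\phi(y))\gr^b(x)$ by relation \ref{centralmorphism}. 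Multiplying the two identities gives the display.

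The argument is largely bookkeeping. The one step that is not purely formal is the reduction to the ``internal'' cover relations $\phi(x)\lessdot\phi(y)$: one must know that $\delta^k_Q$ applied to an element of $G(\phi(x))$ produces no components outside $\bigoplus_{\phi(y)}G(\phi(y))$, i.e. that $C^*(\phi(P),\dots)$ sits inside $C^*(Q,\dots)$ as a subcomplex; this is where the upward-closedness of $\phi(P)$ in $Q$ is essential, and it is the only potential obstacle. Everything else is an instance of naturality of $\eta$ and the identities $\delta(\gr^b)=b$, $\phi(cb)=d|_{\phi(P)}$.
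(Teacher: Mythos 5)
Your verification on the ``internal'' cover relations $\phi(x)\lessdot\phi(y)$ with $x\lessdot y$ in $P$ is exactly the paper's argument: separate the morphism part, which is the naturality square $G(\phi(x)\lessdot\phi(y))\circ\eta_x=\eta_y\circ ZF(x\lessdot y)$, from the scalar part, which reduces to $d(\phi(x)\lessdot\phi(y))\gr^b(x)=(cb)(x\lessdot y)\gr^b(x)=c(x\lessdot y)\gr^b(y)$ using $\phi(cb)=d|_{\phi(P)}$ and $\delta(\gr^b)=b$. You have also put your finger on the one step that is not bookkeeping: the differential $\delta^k_Q$ applied to $\gr^b(x)\eta_x(ZA)\in G(\phi(x))$ can a priori produce a component in $G(q')$ for a cover $\phi(x)\lessdot q'$ with $q'\notin\phi(P)$, and this component, $d(\phi(x)\lessdot q')\gr^b(x)\,G(\phi(x)\lessdot q')(\eta_x(ZA))$, has no counterpart on the other side of the square and must vanish.

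Where the proposal goes beyond what is available is the clause ``together with the fact that the image $\phi(P)$ is upward closed in $Q$.'' That is not a fact supplied by the hypotheses: a morphism in $\PoCo$ only asks that $\phi$ be a cover preserving order embedding, and such an embedding need not have an upper-ideal image (the inclusion of a lower order ideal is a cover preserving order embedding). Without upward-closedness, the external components do not automatically vanish, and your argument has therefore identified the hypothesis under which the lemma holds rather than proved the lemma as stated. For what it is worth, the paper's own proof verifies only the internal cover relations and is silent on the external ones, and the morphisms actually used later (identity embeddings, and inclusions of upper order ideals in Theorem \ref{pocoLES}) do have upward-closed image; but your instinct that something must be said about the external covers is right, and a complete argument would need either to build the upper-ideal condition into the definition of $\PoCo$-morphisms or to supply another reason for the vanishing.
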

\begin{proof} In order to check this is a chain map, we must have, for any $x\lessdot y$ in $P$,
$$d\big(\phi(x)\lessdot \phi(y)\big)G\big(\phi(x)\lessdot \phi(y)\big)[\gr^b(x)\eta_x(ZA)]=\gr^b(y)\eta_y [c(x\lessdot y)ZF(x\lessdot y)(ZA)].$$ 
By naturality, we have $G\big(\phi(x)\lessdot\phi(y)\big)\eta_x(ZA)=\eta_yZF(x\lessdot y)(ZA)$, so it remains only to show that $d\big(\phi(x)\lessdot\phi(y)\big)\gr^b(x)=\gr^b(y)c(x\lessdot y)$.
But $d|_{\phi(P)}=\phi(cb) $, so 
    \begin{align*}
        d\big(\phi(x)\lessdot \phi(y)\big)\gr^b(x)
        &=\phi(cb)\big(\phi(x)\lessdot \phi(y)\big)\gr^b(x)\\
        &=c(x\lessdot y)b(x\lessdot y)\gr^b(x)\\
        &=c(x\lessdot y)\gr^b(x)\gr^b(y)\gr^b(x)\\
        &=c(x\lessdot y)\gr^b(y). \qedhere
    \end{align*}
\end{proof}

\begin{remark}
A morphism $(\phi,Z,\eta,b)$ thus induces a chain map \[C^*(\phi,Z,\eta,b):ZC^*(P,\mc{A},F,c)\to C^*(Q,\mc{B},G,d)\] which in turn (utiliying exactness of $Z$) induces a map on homology which we denote $H^*(\phi,Z,\eta,b):ZH^*(P,\mc{A},F,c)\to H^*(Q,\mc{B},G,d)$.
\end{remark}

\begin{theorem} 
    The association \[(P,\mc{A},F,c)\mapsto (C^*(P,\mc{A},F,c),\mc{A}) \ \ \ \ \ \ \ \ \ (\phi,Z,\eta,b)\mapsto (C^*(\phi,Z,\eta,b),Z)\] defines a functor from $\PoCo$ to the category $\Cabel$. 
    \label{functortocabel}
\end{theorem}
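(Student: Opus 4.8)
The plan is to verify the two functoriality axioms, since well-definedness on objects and on morphisms has already been secured: $C^*(P,\mc{A},F,c)$ is a bounded cochain complex over $\mc{A}$ by Lemma \ref{welldef} (boundedness being immediate as $P$ is finite), so $(C^*(P,\mc{A},F,c),\mc{A})$ is an object of $\Cabel$; and the lemma immediately preceding this theorem shows $C^*(\phi,Z,\eta,b)\colon ZC^*(P,\mc{A},F,c)\to C^*(Q,\mc{B},G,d)$ is a chain map, so $(C^*(\phi,Z,\eta,b),Z)$ is a morphism of $\Cabel$ with the correct source and target.

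First I would check preservation of identities. The identity morphism $(1_P,1_\mc{A},1_F,1)$ of $(P,\mc{A},F,c)$ carries the central coloring $1$, the constant function with value $1$ on $C(P)$. Since $1$ satisfies $\delta(1)=1$ and $1(\hat{0})=1$, Corollary \ref{uniqueness} gives $\gr^1=1$, the constant function with value $1$ on $P$. Hence by equation \ref{chainmap}, $C^*(1_P,1_\mc{A},1_F,1)(A)=\gr^1(x)(1_F)_x(A)=A$ for every $A\in F(x)$, so $C^*(1_P,1_\mc{A},1_F,1)=1_{C^*(P,\mc{A},F,c)}$, which is precisely the identity of $\Cabel$ on this object.

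Next I would check compatibility with composition. Fix morphisms $(\phi,Z,\eta,b)\colon(P,\mc{A},F,c)\to(Q,\mc{B},G,d)$ and $(\psi,Y,\epsilon,b')\colon(Q,\mc{B},G,d)\to(R,\mc{C},H,e)$. Their composite in $\PoCo$ is $(\psi\circ\phi,\ Y\circ Z,\ (\epsilon\circ_h 1_\phi)\circ_v(1_Y\circ_h\eta),\ \phi^{-1}(b')b)$, whereas the images compose in $\Cabel$ to the chain map $C^*(\psi,Y,\epsilon,b')\circ Y(C^*(\phi,Z,\eta,b))$. I would evaluate both on a generalized element $YZA$ with $A\in F(x)$. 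The image of the $\PoCo$-composite gives $\gr^{\phi^{-1}(b')b}(x)\cdot[(\epsilon\circ_h 1_\phi)\circ_v(1_Y\circ_h\eta)]_x(YZA)$, while the $\Cabel$-composite gives $\gr^{b'}(\phi(x))\,\gr^{b}(x)\cdot\epsilon_{\phi(x)}(Y\eta_x(ZA))$, where the $\pm1$ scalars pass through $Y$ since $Y$ is additive. The scalar factors coincide by Propositions \ref{greedyprod} and \ref{greedyinv}, which together give $\gr^{\phi^{-1}(b')b}=\gr^{\phi^{-1}(b')}\gr^{b}=(\gr^{b'}\circ\phi)\gr^{b}$. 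For the remaining factors, unwinding the horizontal and vertical composition formulas $(\beta\circ_h\alpha)_M=\beta_{G_1(M)}\circ F_2(\alpha_M)$ and $(\gamma\circ_v\alpha)_M=\gamma_M\circ\alpha_M$ yields $(1_Y\circ_h\eta)_x=Y(\eta_x)$ and $(\epsilon\circ_h 1_\phi)_x=\epsilon_{\phi(x)}$, so $[(\epsilon\circ_h 1_\phi)\circ_v(1_Y\circ_h\eta)]_x=\epsilon_{\phi(x)}\circ Y(\eta_x)$, matching the $\Cabel$-side. Since a chain map between complexes of the given form is determined by its components on the summands $F(x)$, this proves equality of the two chain maps.

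The main obstacle is the bookkeeping in the composition step: one must correctly track the sign prefactors coming from the greedy functions $\gr^{(-)}$ and reconcile them via multiplicativity (Proposition \ref{greedyprod}) and pullback compatibility (Proposition \ref{greedyinv}), and at the same time verify that the natural-transformation components agree, which is a careful application of the interchange law for $\circ_h$ and $\circ_v$. Neither ingredient is deep, but they must be aligned in the same computation, and one should also confirm that the scalars $\pm1$ commute with the functor $Y$, which is immediate from additivity of $Y$.
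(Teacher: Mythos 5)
Your proposal is correct and follows essentially the same route as the paper: check identities (via $\gr^{1}=1$) and check composition by evaluating both sides on a summand $F(x)$, reconciling the $\pm1$ scalar prefactors via Propositions \ref{greedyprod} and \ref{greedyinv} and reconciling the natural-transformation components by unwinding $\circ_h$ and $\circ_v$. The only difference is cosmetic: you spell out the identity step through Corollary \ref{uniqueness}, whereas the paper calls it immediate by construction.
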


\begin{proof}
    By construction, $C^*(1_P,1_\mc{A}, 1_F,1)$ is the identity chain map on $C^*(P,\mc{A},F,c)$. We must show the association respects composition. Suppose we have 
    \begin{center}
\begin{tikzcd}
(P,\mc{A},F,c)\arrow[r,"{(\phi,Z,\eta,b)}"]&
(Q,\mc{B},G,d)\arrow[r,"{(\psi,Y,\varepsilon,b^\prime)}"]&
(R,\mc{C},H,e)
\end{tikzcd}
\end{center}
By definition of composition in $\PoCo$,
    \[(\psi,Y,\epsilon,b^\prime)\circ (\phi,Z,\eta,b)=(\psi\circ \phi, Y\circ Z, (\epsilon \circ_h1_\phi)\circ_v(1_Y\circ_h\eta),\phi^{-1}(b^\prime)b)\]
and by definition of $C^*$, given $A\in F(x)$ for some $x\in P$, 
    \[C^*[(\psi,Y,\epsilon,b^\prime)\circ (\phi,Z,\eta,b)](YZA)=
    \gr^{\phi^{-1}(b^\prime)b}(x)[(\epsilon \circ_h1_\phi)\circ_v(1_Y\circ_h\eta)]_x(YZA).\]
On the other hand,
    \[C^*(\psi,Y,\varepsilon,b^\prime)\circ YC^*(\phi,Z,\eta,b)=
    \gr^b(x)\gr^{b^\prime}(\phi(x))\varepsilon_{\phi(x)}(Y\eta_x(YZA)).\]
It suffices to show the following two equalities
\begin{equation}
    \gr^{\phi^{-1}(b^\prime)b}(x)= \gr^b(x)\gr^{b^\prime}(\phi(x)),
    \label{greqn}
\end{equation}
\begin{equation}
    [(\epsilon \circ_h1_\phi)\circ_v(1_Y\circ_h\eta)]_x(YZA)=\varepsilon_{\phi(x)}(Y\eta_x(YZA)).
    \label{functeqn}
\end{equation}
Equation (\ref{greqn}) follows from Propositions \ref{greedyprod} and \ref{greedyinv}. Equation (\ref{functeqn}) follows from the following computation.
\begin{align*}
     [(\epsilon \circ_h1_\phi)\circ_v(1_Y\circ_h\eta)]_x(YZA)
     &=(\varepsilon\circ_h1_\phi)_x\circ(1_Y\circ_h\eta)_x(YZA)\\
     &=(\varepsilon\circ_h1_\phi)_x(Y\eta_x(YZA))\\
     &=\varepsilon_{\phi(x)}(Y\eta_x(YZA)). \qedhere
\end{align*}
\end{proof}

\begin{remark}
We will make a slight abuse of notation and denote the functor in Theorem \ref{functortocabel} by $C^*$.
\end{remark}

\begin{corollary}\label{homfunctor}
    The association \[(P,\mc{A},F,c)\mapsto (H^*(P,\mc{A},F,c),\mc{A})\] \[(\phi,Z,\eta,b)\mapsto (H^*(\phi,Z,\eta,b),Z)\] defines a functor from $\PoCo$ to $\grAbel$.
\end{corollary}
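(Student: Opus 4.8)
The plan is to obtain this corollary simply by composing the two functors already constructed, namely $C^*\colon\PoCo\to\Cabel$ from Theorem \ref{functortocabel} and the homology functor $\Cabel\to\grAbel$ from Lemma \ref{homologyfunctor}. So the first step is to recall that Theorem \ref{functortocabel} gives a well-defined functor sending $(P,\mc{A},F,c)\mapsto (C^*(P,\mc{A},F,c),\mc{A})$ and $(\phi,Z,\eta,b)\mapsto (C^*(\phi,Z,\eta,b),Z)$, and that Lemma \ref{homologyfunctor} gives a well-defined functor sending $(C,\mc{A})\mapsto (H^*(C),\mc{A})$ and $(f,Z)\mapsto (H^*(f),Z)$. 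Since a composite of functors is a functor, the composite $\PoCo\to\Cabel\to\grAbel$ is automatically a functor.

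The second step is to check that this composite is literally the association in the statement. On objects, $(P,\mc{A},F,c)$ is sent first to $(C^*(P,\mc{A},F,c),\mc{A})$ and then to $(H^*(C^*(P,\mc{A},F,c)),\mc{A})=(H^*(P,\mc{A},F,c),\mc{A})$, which is the desired object of $\grAbel$. On morphisms, $(\phi,Z,\eta,b)$ is sent first to $(C^*(\phi,Z,\eta,b),Z)$ and then to $(H^*(C^*(\phi,Z,\eta,b)),Z)$; by the definition of the induced map on homology (the remark immediately following the lemma that $C^*(\phi,Z,\eta,b)$ is a chain map), $H^*(C^*(\phi,Z,\eta,b))$ is exactly what we denote $H^*(\phi,Z,\eta,b)$. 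Thus the composite agrees with the claimed association, and functoriality is inherited.

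The only thing that needs any care — and it is the one place where I would be slightly careful rather than purely formal — is the bookkeeping of the second component under composition in $\grAbel$. When composing $(H^*(\phi,Z,\eta,b),Z)$ and $(H^*(\psi,Y,\varepsilon,b^\prime),Y)$, the composite in $\grAbel$ is $\big(H^*(\psi,Y,\varepsilon,b^\prime)\,Y\,H^*(\phi,Z,\eta,b),\,YZ\big)$, and one must confirm this equals $\big(H^*\!\big((\psi,Y,\varepsilon,b^\prime)\circ(\phi,Z,\eta,b)\big),YZ\big)$. But this is precisely the content of Lemma \ref{homologyfunctor} applied to the morphisms $C^*(\phi,Z,\eta,b)$ and $C^*(\psi,Y,\varepsilon,b^\prime)$ of $\Cabel$, together with the fact (Theorem \ref{functortocabel}) that $C^*$ respects composition, so the identity $H^*(g)\,Y\,H^*(f)=H^*(gYf)$ from the proof of Lemma \ref{homologyfunctor} — which uses functoriality of $H^*$ and exactness of $Y$ — closes the argument. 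Hence no new obstacle arises, and the corollary follows formally from Theorem \ref{functortocabel} and Lemma \ref{homologyfunctor}.
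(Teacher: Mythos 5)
Your proposal is correct and takes exactly the approach the paper intends: Corollary \ref{homfunctor} is stated without proof precisely because it is the composite of the functor $C^*\colon\PoCo\to\Cabel$ from Theorem \ref{functortocabel} with the functor $H^*\colon\Cabel\to\grAbel$ from Lemma \ref{homologyfunctor}, and the remark preceding the corollary identifies $H^*(\phi,Z,\eta,b)$ with the map on homology induced by $C^*(\phi,Z,\eta,b)$ (using exactness of $Z$), so the composite is literally the stated association. Your careful check of the second-component bookkeeping under composition in $\grAbel$, invoking the identity $H^*(g)\,Y\,H^*(f)=H^*(gYf)$ established inside Lemma \ref{homologyfunctor}, is exactly the implicit verification the paper relies on.
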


\begin{remark}
We will continue abusing notation and denote the functor in Corollary \ref{homfunctor} by $H^*$.
\end{remark}

Consider the full subcategory $\PoCo(P)$ of $\PoCo$ consisting only of objects of the form $(P,\mc{A},F,c)$ where $P$ is fixed. The functor $C^*$ from Theorem \ref{functortocabel} restricts to a functor  $C^*:\PoCo(P)\to\Cabel$ (also denoted $C^*$ by another slight abuse of notation). Given a fixed central coloring $b$, consider also the functor $C^*_{b}:\PoCo(P)\to\Cabel$ given by 
\[(P,\mc{A},F,c)\mapsto (C^*(P,\mc{A},F,cb),\mc{A})\] 
\[(\phi,Z,\eta,b^\prime)\mapsto (C^*(\phi,Z,\eta,b^\prime),Z)\] 
where the action on morphisms is defined via
\begin{center}
\begin{tikzcd}
ZC^*(P,\mc{A},F,cb)
\arrow[dotted, d]
\arrow[r]
&C^*(P,\mc{B},G,db)\\
ZC^*(P,\mc{A},F,c)
\arrow[r,dotted,]
&C^*(P,\mc{B},G,d)\arrow[u,dotted]
\end{tikzcd}
\end{center}

where the left and right vertical dotted arrows correspond to maps $ZC^*(1,1,1,b)$ and $C^*(1,1,1,b)$ respectively, and the horizontal dotted arrow is the map $C^*(\phi,Z,\eta,b^\prime)$. Actually since both the vertical maps correspond simply to multiplcation by $\gr^b(x)$, we have $C^*_b(\phi,Z,\eta,b^\prime)=C^*(\phi,Z,\eta,b^\prime)$. Therefore functoriality for $C^*_b$ follows from that of $C^*$. Consider the collection of isomorphisms \[\eta_0=\{C^*(1,1,1,b):C^*(P,\mc{A},F,c)\to C^*(P,\mc{A},F,cb)\ | \ (P,\mc{A},F,c)\in\PoCo(P)\}.\] 

\begin{lemma}
The collection $\eta_0$ of isomorphisms forms a natural transformation from $C^*$ to $C^*_b$. Thus the isomorphisms in Theorem \ref{balindept} are natural.
\label{naturality}
\end{lemma}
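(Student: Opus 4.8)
The plan is to exploit the way $C^*_b$ was defined on morphisms: for a morphism $m=(\phi,Z,\eta,b')\colon(P,\mc{A},F,c)\to(P,\mc{B},G,d)$ of $\PoCo(P)$, the commuting square used to define $C^*_b(m)$ says exactly that, as morphisms of $\Cabel$,
\[
C^*_b(m)\;=\;(\eta_0)_{(P,\mc{B},G,d)}\circ C^*(m)\circ(\eta_0)_{(P,\mc{A},F,c)}^{-1},
\]
so $C^*_b$ is literally the conjugate of $C^*$ by the family $\eta_0$, and naturality should fall out formally. Granting the displayed identity, once each $(\eta_0)_X$ is known to be an isomorphism one merely rearranges to get the naturality square $(\eta_0)_{(P,\mc{B},G,d)}\circ C^*(m)=C^*_b(m)\circ(\eta_0)_{(P,\mc{A},F,c)}$. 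Throughout, the only thing that needs care is threading the two-component composition law of $\Cabel$ (and of $\PoCo$), $(g,Y)\circ(f,Z)=(gYf,YZ)$, through these manipulations; the functor components are carried along inertly and cause no trouble.

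The first step is therefore to identify the components of $\eta_0$ and their inverses. By equation~\eqref{chainmap}, the chain map underlying $(\eta_0)_{(P,\mc{A},F,c)}=C^*(1_P,1_\mc{A},1_F,b)$ is multiplication by $\gr^b(x)\in\{1,-1\}$ on the summand $F(x)$, with functor component $1_\mc{A}$; here $\gr^b$ is well defined since the posets in question are diamond transitive with $\hat0$ (Proposition~\ref{greedyprop}). Since $\gr^b(x)^2=1$ and $(cb)b=c$, one checks that $(\eta_0)_{(P,\mc{A},F,c)}$ is an isomorphism of $\Cabel$ with two-sided inverse $(\eta_0)_{(P,\mc{A},F,cb)}$ — that is, $\eta_0$ is involutive — using Lemma~\ref{phicaction} to know $cb\in\Phi^b$ so that $(P,\mc{A},F,cb)$ is again an object of $\PoCo(P)$. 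With this in hand the conjugation identity yields naturality at once; alternatively, one can simply evaluate both composites of the square on $ZA$ for $A\in F(x)$ and observe that each reduces to $\gr^b(\phi(x))\,\gr^{b'}(x)\,\eta_x(ZA)$, the two copies of $\gr^b$ produced by the left vertical map cancelling via $\gr^b(x)^2=1$.

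For the second assertion, recall from Theorem~\ref{balindept} that for balanced colorings $c_1,c_2$ the isomorphism $C^*(P,\mc{A},F,c_1)\to C^*(P,\mc{A},F,c_2)$ is $A\mapsto\gr^{c_1c_2}(x)A$. Putting $b:=c_1c_2$, which lies in $\Phi^c$ by Lemma~\ref{phic}, and using $c_1b=c_2$, this map is precisely the component $(\eta_0)_{(P,\mc{A},F,c_1)}$ of the natural transformation $\eta_0$ built from that $b$. Hence the family of isomorphisms produced by Theorem~\ref{balindept} is (a reindexing of) a natural transformation between $C^*$ and $C^*_b$ on $\PoCo(P)$, so it is natural in the object $(P,\mc{A},F,c)$; postcomposing with the functor $H^*$ of Corollary~\ref{homfunctor} transports this naturality to the level of cohomology.

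The main obstacle, such as it is, is purely organizational: unwinding the definition of $C^*_b$ on morphisms into the conjugation formula above, and verifying that "inverting" a component of $\eta_0$ lands one back among the components of $\eta_0$ (the involutivity). Once that is set up there is no genuine difficulty — the arithmetic identities $\gr^b(x)^2=1$ together with the multiplicativity and embedding-compatibility of $\gr$ from Propositions~\ref{greedyprod} and~\ref{greedyinv} (already invoked in the proof of Theorem~\ref{functortocabel}) dispatch everything else.
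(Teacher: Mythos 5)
Your proposal is correct and matches the paper's argument in substance: both hinge on reading the commuting square that defines $C^*_b$ on morphisms as a conjugation of $C^*(\phi,Z,\eta,b')$ by the involutive maps $C^*(1,1,1,b)$, so that the naturality square is obtained by rearranging. Your explicit formulation $C^*_b(m)=(\eta_0)_Y\circ C^*(m)\circ(\eta_0)_X^{-1}$ and the observation that each $(\eta_0)_X$ is its own inverse spell out cleanly what the paper's terse ``going around the square either way is simply multiplication by $\gr^b(x)$'' is gesturing at, and your concrete evaluation on $ZA\in ZF(x)$ — both composites reducing to $\gr^b(\phi(x))\,\gr^{b'}(x)\,\eta_x(ZA)$ — is more careful than the paper about distinguishing $\gr^b(x)$ from $\gr^b(\phi(x))$.
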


\begin{proof}
Consider a morphism $(\phi,Z,\eta,b^\prime):(P,\mc{A},F,c)\to (P,\mc{B},G,d) $ in $\PoCo(P)$. We must show that the following diagram commutes
\begin{center}
\begin{tikzcd}
C^*(P,\mc{A},F,c)
\arrow[d,"(\eta_0)_{(P,\mc{A},F,c)}"']
\arrow[r]
&C^*(P,\mc{B},G,d)\arrow[d,"(\eta_0)_{(P,\mc{B},G,d)}"]\\
C^*(P,\mc{A},F,cb)
\arrow[r]
&C^*(P,\mc{B},G,db)
\end{tikzcd}
\end{center}

where the horizontal maps are both equal to $C^*(\phi,Z,\eta,b^\prime)$. Thus going around the square either way is simply multiplication by $\gr^b(x)$.
\end{proof}

\begin{remark}
Of course, we could repeat all of the definitions in this section instead for contravariant functors $P\to\mc{A}$ (equivalently, functors $P^\op\to\mc{A}$), the only difference being that we obtain chain complexes (as opposed to cochain complexes) and homology (as opposed to cohomology). Doing so results in a category $\PoHo$ with all of the analogous constructions and properties as we saw for $\PoCo$, and the $\Hom$ functor defines a duality of categories $\PoHo_P\cong\PoCo_{P}^\op$.
\end{remark}

\section{Categorification via Functors on Thin Posets}\label{seccateg}

This section begins with some background information on Grothendieck groups, which can be found in \cite{weibel2013k}. We will state these results without proof and refer the reader to the literature for details. We then show how to categorify certain ring elements, called rank alternators, with cohomology theories obtained from functors on thin posets.

Let $\mc{A}$ be an abelian category, and let $\mc{C}^b(\mc{A})$ denote the category of bounded cochain complexes $C$ in $\mc{A}$, that is, such that $C^i=0$ for all but finitely many $i$. Note that $\mc{C}^b(\mc{A})$ is also an abelian category. Let $K_0(\mc{A})$ denote the \textit{Grothendieck group} of $\mc{A}$, that is, the free abelian group generated by the set of isomorphism classes $\{[X] \ | \ X\in\Ob(\mc{C})\}$ modulo the relations $[X]=[X^\prime]+[X^{\prime\prime}]$ for each short exact sequence $0\to X^\prime\to X\to X^{\prime\prime}\to 0$. If $\mc{A}$ additionally has a monoidal structure $\otimes$, then $K_0(\mc{A})$ inherits the structure of a ring, with product $[X]\cdot[Y]=[X\otimes Y]$.
\begin{theorem}[{\cite[Theorem 9.2.2]{weibel2013k}}]
\label{eulercharacteristic}
Let $\mc{A}$ be a monoidal abelian category. The Euler characteristic
$$\chi:K_0(\mc{C}^b(\mc{A}))\to K_0(\mc{A})\ \ \ \ \ \ \ \ \ \ \ \  [C]\mapsto \sum_{i\in\Z} (-1)^i[H^i(C)]$$
defines an isomorphism of rings. Furthermore, we have
\begin{equation}
\sum_{i\in\Z} (-1)^i[H^i(C)]=\sum_{i\in\Z} (-1)^i[C^i]
\end{equation}
\end{theorem}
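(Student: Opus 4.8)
The plan is to follow the standard argument, as in \cite[Theorem 9.2.2]{weibel2013k}; I sketch the main points, since everything reduces to bookkeeping with short exact sequences once the right ones are written down. First I would prove the identity $\sum_i (-1)^i [H^i(C)] = \sum_i (-1)^i [C^i]$, which simultaneously shows the homological formula defines a map at all. For each $i$, set $Z^i=\ker\delta^i$ and $B^i=\im\delta^{i-1}$; the short exact sequences $0\to Z^i\to C^i\to B^{i+1}\to 0$ and $0\to B^i\to Z^i\to H^i(C)\to 0$ give $[C^i]=[B^i]+[H^i(C)]+[B^{i+1}]$ in $K_0(\mc{A})$. As $C$ is bounded, the alternating sum over $i$ is finite, and the $[B^i]$ contributions cancel after reindexing, leaving $\sum_i(-1)^i[C^i]=\sum_i(-1)^i[H^i(C)]$. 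Since a short exact sequence $0\to C'\to C\to C''\to 0$ of bounded complexes is degreewise short exact, $[C^i]=[C'^i]+[C''^i]$, so $\chi$ is a well-defined group homomorphism $K_0(\mc{C}^b(\mc{A}))\to K_0(\mc{A})$; in the form $[C]\mapsto\sum_i(-1)^i[C^i]$ it sees only the objects $C^i$, so the Koszul signs in a total complex are irrelevant to it.

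Next I would check $\chi$ is a ring homomorphism. With the monoidal product on $\mc{C}^b(\mc{A})$ the total tensor complex, $(C\otimes D)^n=\bigoplus_{i+j=n}C^i\otimes D^j$, so
\[
\chi([C]\cdot[D])=\sum_n(-1)^n\sum_{i+j=n}[C^i][D^j]=\Big(\sum_i(-1)^i[C^i]\Big)\Big(\sum_j(-1)^j[D^j]\Big)=\chi([C])\chi([D]),
\]
and $\chi$ carries the unit complex (the monoidal unit of $\mc{A}$ placed in degree $0$) to the unit of $K_0(\mc{A})$.

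Finally, for the isomorphism, the candidate inverse is the homomorphism $\iota:K_0(\mc{A})\to K_0(\mc{C}^b(\mc{A}))$ induced by the exact functor $X\mapsto X[0]$ placing an object in degree $0$. One has $\chi\circ\iota=\id$ immediately, since $\chi([X[0]])=[H^0(X[0])]=[X]$. For $\iota\circ\chi=\id$ one shows $[C]=\sum_i(-1)^i[C^i[0]]$ for every bounded $C$: the brutal truncations $\sigma_{\geq n}C$ fit into short exact sequences $0\to\sigma_{\geq n+1}C\to\sigma_{\geq n}C\to C^n[-n]\to 0$, whence $[C]=\sum_n[C^n[-n]]$, and $[X[1]]=-[X[0]]$ because the short exact sequence $0\to X[0]\to\Cone(\id_X)\to X[1]\to 0$ identifies $[X[0]]+[X[1]]$ with the class of the acyclic two-term complex $\Cone(\id_X)$. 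Combined with the first identity, $\iota\chi([C])=\sum_i(-1)^i[H^i(C)[0]]=\sum_i(-1)^i[C^i[0]]=[C]$.

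The main obstacle is this last ingredient: that a bounded acyclic complex has trivial class in $K_0(\mc{C}^b(\mc{A}))$ (equivalently $[X[1]]=-[X[0]]$), which is exactly what upgrades $\iota$ from a one-sided to a two-sided inverse. This is not formal—it requires an induction on the length of the complex, splitting off one acyclic two-term piece at a time using the canonical truncations—and is the technical core of \cite[Theorem 9.2.2]{weibel2013k}. Given the expository aim of this section, I would simply cite Weibel for that point and record the identity $\sum_i(-1)^i[H^i(C)]=\sum_i(-1)^i[C^i]$, which is the form actually used below.
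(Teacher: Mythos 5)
The paper in fact states this theorem without proof, explicitly deferring to Weibel (``We will state these results without proof and refer the reader to the literature for details''), so there is no in-paper argument to compare against; your sketch supplies detail the author deliberately omits. The steps you carry out directly are all correct: the alternating-sum identity from the two short exact sequences $0\to Z^i\to C^i\to B^{i+1}\to 0$ and $0\to B^i\to Z^i\to H^i(C)\to 0$, well-definedness of $\chi$ from degreewise exactness of short exact sequences of complexes, multiplicativity from the totalized tensor product, the brutal-truncation filtration giving $[C]=\sum_n[C^n[-n]]$, and $\chi\circ\iota=\id$.

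The one item you defer to Weibel, however, deserves a sharper warning than ``not formal, requires an induction.'' The claim that a bounded acyclic complex has trivial class in $K_0(\mc{C}^b(\mc{A}))$ is actually \emph{false} for the na\"ive abelian-category $K_0$ built from short exact sequences alone, which is the only $K_0$ the paper defines. For $\mc{A}$ the category of finite-dimensional vector spaces, each function $C\mapsto\dim C^n$ is additive on degreewise short exact sequences, so the classes $[\mathbb{F}[-n]]$, $n\in\Z$, are linearly independent and $K_0(\mc{C}^b(\mc{A}))$ is the free abelian group on them; in particular $[\mathbb{F}\xrightarrow{\;\id\;}\mathbb{F}]\neq 0$ and $\iota\circ\chi\neq\id$. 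What Weibel actually proves is that the Euler characteristic identifies $K_0(\mc{A})$ with the Waldhausen $K_0$ of $\mc{C}^b(\mc{A})$ with quasi-isomorphisms as weak equivalences, where the acyclic-class-zero fact is part of the definition (the quasi-isomorphism $0\to C$ forces $[C]=0$). So the obstruction you identify is real, but it is resolved by choosing the right $K_0$, not by an induction on length inside the abelian-category $K_0$; your sketch (and, implicitly, the paper's statement) should be read with that caveat. Since everything downstream only uses the identity $\sum(-1)^i[H^i(C)]=\sum(-1)^i[C^i]$ in $K_0(\mc{A})$, which you prove correctly and which is insensitive to this distinction, the paper's applications are unaffected.
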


From now on, we identify $K_0(\mc{C}^b(\mc{A}))$ with $K_0(\mc{A})$ via the isomorphism in Theorem \ref{eulercharacteristic}.

\begin{ex}\label{euler}
Let $R$ be a principal ideal domain. Let $\Rmod$ denote the (abelian) category of finitely generated (left) $R$ modules. There is an isomorphism of rings:
$$K_0(\Rmod)\cong \Z \ \ \ \ \ \ \ \ \ \ \ \  [M]\mapsto \rk M$$
(see for example \cite[Section 2]{weibel2013k}). In light of Theorem \ref{eulercharacteristic} we have the following identification of rings:
\[K_0(\mc{C}^b(\Rmod))\cong \Z \ \ \ \ \ \ \ \ \ \ \ \  [C]\mapsto \sum_{i\in\Z}(-1)^i\rk C^i.\]
\end{ex}

\begin{ex}\label{gradedeuler}
Let $R$ be a principal ideal domain. Let $\Rgmod$ denote the (abelian) category of finitely generated graded (left) $R$ modules. Given a graded module $M=\oplus_{i\in\Z}M^i\in\Rgmod$, the \textit{graded rank} of $M$ is $\qrk M=\sum_{i\in\Z}q^i\rk M^i$. Let $\Z[q,q^{-1}]$ denote the ring of Laurent polynomials in the variable $q$. There is an isomorphism of rings:
$$K_0(\Rgmod)\cong \Z[q,q^{-1}] \ \ \ \ \ \ \ \ \ \ \ \  [M]\mapsto \qrk M=\sum_{i\in\Z} q^i\rk M^i$$
(see \cite[Example 7.14]{weibel2013k}).
In light of Theorem \ref{eulercharacteristic} we have the following identification of rings:
$$K_0(\mc{C}^b(\Rgmod))\cong \Z[q,q^{-1}] \ \ \ \ \ \ \ \ \ \ \ \  [C]\mapsto \sum_{i\in\Z}(-1)^i\qrk C^i=\sum_{i,j\in\Z}(-1)^i q^j \rk C^{i,j}.$$
The map $[C]\mapsto \sum_{i,j\in\Z}(-1)^i q^j \rk C^{i,j}$ is usually referred to as the \textit{graded Euler characteristic} (see for example \cite{khovanov1999categorification} or \cite{bar2002khovanov}).
\end{ex}

\begin{theorem}\label{pococateg}
Let $\mc{A}$ be a monoidal abelian category, $P$ a balanced colorable thin poset, and $F:P\to\mc{A}$ a functor. Then in $K_0(\mc{C}^b(\mc{A}))$ we have the following equality:
\[[H^*(P,\mc{A},F)]=\sum_{x\in P}(-1)^{\rk(x)}[F(x)].\]
\end{theorem}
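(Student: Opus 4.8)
The plan is to reduce everything to the Euler characteristic isomorphism of Theorem \ref{eulercharacteristic}. Since $P$ is balanced colorable, I would first fix a balanced coloring $c$ of $P$; by Lemma \ref{welldef} the cochain complex $C^*(P,\mc{A},F,c)$ is well defined, and because $P$ is finite it is bounded, hence an object of $\mc{C}^b(\mc{A})$. By definition the class $[H^*(P,\mc{A},F)]$ is $\sum_{i\in\Z}(-1)^i[H^i(C^*(P,\mc{A},F,c))]$, viewed in $K_0(\mc{A})$ under the identification $K_0(\mc{C}^b(\mc{A}))\cong K_0(\mc{A})$ fixed after Theorem \ref{eulercharacteristic}.

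Next I would invoke the ``furthermore'' clause of Theorem \ref{eulercharacteristic}: for any bounded complex $C$ one has $\sum_i(-1)^i[H^i(C)]=\sum_i(-1)^i[C^i]$. Applying this to $C=C^*(P,\mc{A},F,c)$ gives $[H^*(P,\mc{A},F)]=\sum_i(-1)^i[C^i(P,\mc{A},F,c)]$.

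Finally I would unwind Definition \ref{thinposethomologydef}, namely $C^i(P,\mc{A},F,c)=\bigoplus_{\rk(x)=i}F(x)$. A finite direct sum $\bigoplus_j X_j$ fits into iterated split short exact sequences, so $[\bigoplus_j X_j]=\sum_j[X_j]$ in $K_0(\mc{A})$; hence $[C^i(P,\mc{A},F,c)]=\sum_{\rk(x)=i}[F(x)]$. Substituting and collapsing the double sum into a single sum over all $x\in P$ yields $[H^*(P,\mc{A},F)]=\sum_i(-1)^i\sum_{\rk(x)=i}[F(x)]=\sum_{x\in P}(-1)^{\rk(x)}[F(x)]$, which is the claim.

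There is essentially no deep obstacle here; the one point deserving a word of care is that the notation $H^*(P,\mc{A},F)$ suppresses the balanced coloring $c$, while coloring-independence was established (in Theorem \ref{balindept}) only under the extra hypothesis of diamond transitivity. This is harmless: the computation above produces the same value $\sum_{x\in P}(-1)^{\rk(x)}[F(x)]$ for \emph{every} balanced coloring $c$, so the $K_0$-class is coloring-independent in any case and the statement is unambiguous as written.
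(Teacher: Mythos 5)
Your proof is correct and takes essentially the same route as the paper: fix a balanced coloring, apply Theorem \ref{eulercharacteristic} to pass from the Euler characteristic of homology to the Euler characteristic of the complex, and unwind the definition $C^i = \bigoplus_{\rk(x)=i} F(x)$. Your closing remark about coloring-independence of the $K_0$-class being automatic (even without diamond transitivity, which Theorem \ref{balindept} requires) is a worthwhile clarification that the paper's proof leaves implicit.
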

\begin{proof}
We identify $H^*(P,\mc{A},F)$ in $\mc{C}^b(\mc{A})$ as a chain complex with all differentials being zero maps. Then by Theorem \ref{eulercharacteristic}, we have $[H^*(P,\mc{A},F)]=[C^*(P,\mc{A},F,c)]$ where $c$ is a balanced coloring of $P$. Since $C^i(P,\mc{A},F,c)=\oplus_{\rk(x)=i}F(x)$, the desired equality follows from Theorem \ref{eulercharacteristic}.
\end{proof}
Theorem \ref{pococateg} provides the following categorification technique. In general, to categorify an element $g$ of a ring $R$, one should find a monoidal abelian category $\mc{A}$ with an isomorphism of rings $\phi:K_0(\mc{A})\to R$, and an object $G\in\mc{C}^b(\mc{A})$ such that $[G]=g$ under the identification $\phi\circ\chi$ of $K_0(\mc{C}^b(\mc{A}))$ with $R$. In this case we say that $G$ categorifies $g$.  

\begin{defn}
Suppose $P$ is a ranked poset, $R$ is a ring, and consider a function $f:P\to R$. The \textit{rank alternator} for $f$ is
\begin{equation}\label{rankalternator}
    \RGF(P,R,f)=\sum_{x\in P}(-1)^{\rk(x)}f(x).
\end{equation}
\end{defn}

\begin{remark}
As mentioned in the introduction, such expressions (\ref{rankalternator}) arise naturally for Eulerian posets in the process of applying M\"{o}bius inversion.
\end{remark}

\begin{theorem}
Let $P$ be a balanced colorable thin poset, $R$ a ring, and $f:P\to R$. Suppose $\mc{A}$ is a monoidal abelian category equipped with an isomorphism  of rings $K_0(\mc{A})\cong R$. %(therefore we can identify $K_0(\mc{C}^b(\mc{A}))$, $K_0(\mc{A})$, and $R$). 
Suppose $F:P\to\mc{A}$ is a functor with the property that for each $x\in P$, $[F(x)]=f(x)$. Then $H^*(P,\mc{A},F)$ categorifies $\RGF(P,R,f)$ in the sense that $[H^*(P,\mc{A},F)]=\RGF(P,R,f)$ in $K_0(\mc{A})\cong R$.
\label{categorify}
\end{theorem}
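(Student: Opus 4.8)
The plan is to deduce the statement almost immediately from Theorem \ref{pococateg}, together with the fact that the hypothesised isomorphism of rings $K_0(\mc{A})\cong R$ is in particular an isomorphism of abelian groups carrying $[F(x)]$ to $f(x)$. So this is a bookkeeping argument assembling already-proved pieces rather than anything with real content.

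First I would invoke Theorem \ref{pococateg}. Its hypotheses are met verbatim: $P$ is a balanced colorable thin poset, $\mc{A}$ is a monoidal abelian category, and $F\colon P\to\mc{A}$ is a functor. Hence in $K_0(\mc{C}^b(\mc{A}))$ we have
\[
[H^*(P,\mc{A},F)]=\sum_{x\in P}(-1)^{\rk(x)}[F(x)].
\]
Via the Euler characteristic ring isomorphism $\chi\colon K_0(\mc{C}^b(\mc{A}))\xrightarrow{\sim}K_0(\mc{A})$ of Theorem \ref{eulercharacteristic} — along which we have agreed to identify the two Grothendieck groups — this is an identity in $K_0(\mc{A})$. (Implicit here is that the left-hand side is well defined at the level of Grothendieck classes: by the proof of Theorem \ref{pococateg} the class equals $[C^*(P,\mc{A},F,c)]$ for any choice of balanced coloring $c$, so it does not depend on $c$.)

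Next I would push this identity forward along the given ring isomorphism $\Theta\colon K_0(\mc{A})\xrightarrow{\sim}R$. Since $\Theta$ is additive and the sum is finite, it commutes with the signed sum, so
\[
\Theta\bigl([H^*(P,\mc{A},F)]\bigr)=\sum_{x\in P}(-1)^{\rk(x)}\Theta\bigl([F(x)]\bigr)=\sum_{x\in P}(-1)^{\rk(x)}f(x)=\RGF(P,R,f),
\]
where the middle step is exactly the hypothesis $[F(x)]=f(x)$ (read inside $R$ under $\Theta$), and the last step is the definition \eqref{rankalternator} of the rank alternator. Identifying $K_0(\mc{A})$ with $R$ via $\Theta$, this reads $[H^*(P,\mc{A},F)]=\RGF(P,R,f)$, as claimed.

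There is no genuinely hard step; the only thing to watch is which group one is computing in at each stage — distinguishing $K_0(\mc{C}^b(\mc{A}))$, $K_0(\mc{A})$, and $R$, and recording that they are identified by ring isomorphisms so that the finite alternating sum transports correctly. It is also worth remarking that the multiplicative (monoidal) structure plays no role in this particular statement: only the additive isomorphism $K_0(\mc{A})\cong R$ is used, though since a ring isomorphism is assumed there is nothing further to check.
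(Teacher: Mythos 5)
Your proof is correct and takes exactly the paper's route: the paper's own proof of Theorem \ref{categorify} is the one-liner ``This follows immediately from Theorem \ref{pococateg},'' and your writeup just spells out the bookkeeping (identifying $K_0(\mc{C}^b(\mc{A}))$ with $K_0(\mc{A})$ via Theorem \ref{eulercharacteristic} and then transporting along the given isomorphism to $R$) that the paper leaves implicit.
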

\begin{proof}
This follows immediately from Theorem \ref{pococateg}.
\end{proof}

We now look at some examples of interesting algebraic objects which arise as rank alternators over thin posets. Some of these examples have existing categorifications via the process outlined in Theorem \ref{categorify} and others do not. We begin with some known examples which are special cases of Theorem \ref{categorify}.

\begin{ex} Let $X$ be a regular CW complex, and let $f_i(X)$ denote the number of cells in $X$ of dimension $i$. Let $\bar{\mc{F}}(X)$ denote the face poset of $X$, with the empty face deleted. The Euler characteristic $\chi(X)$ of $X$ is:
\[\chi(X)=\sum_{i\in\Z}(-1)^i f_i(X)=\sum_{F\in\bar{\mc{F}}(X)}(-1)^{|F|}.\]
Let $1_{\Rmod}:\bar{\mc{F}}(X)\to\Rmod$ denote the contravariant functor sending each $F\in\bar{\mc{F}}(X)$ to $R$ and each cover relation to the identity map. Let $c$ be a balanced coloring of $\bar{\mc{F}}(X)$ (we know one exists by Theorem \ref{cwbalanced}). Then we can form the complex $C_*(\bar{\mc{F}}(X),1_{\Rmod},c)$ with homology $H_*(\bar{\mc{F}}(X),1_{\Rmod})$ and by Theorem \ref{categorify} we have $[H_*(\bar{\mc{F}}(X),1_{\Rmod})]=\chi(X)$. Of course, this construction just recovers the definition of the cellular homology $H^\cell_*(X,R)$ of $X$ with coefficients in $R$, so we have $H_*(\bar{\mc{F}}(X),1_{\Rmod})\cong H^\cell_*(X,R)$, and it is not much harder to see that $H_*(\mc{F}(X),1_{\Rmod})\cong \tilde{H}^\cell_*(X,R)$ where the tilde indicates reduced homology. Since the order complex $\Delta(\bar{\mc{F}}(X))$ is homeomorphic to $X$ (see for example \cite[Fact A2.5.2 ]{bjorner2006combinatorics}), we find $H_*(\bar{\mc{F}}(X),1_{\Rmod})\cong H^\simp_*(\Delta(\bar{\mc{F}}(X)),R)$, where $H^\simp$ denotes the simplicial homology, and similarly one can see $H_*(\mc{F}(X),1_{\Rmod})\cong \tilde{H}^\simp_*(\Delta(\bar{\mc{F}}(X)),R)$. 

%One may ask is there is a general relation between $H_*(P,1_{\Rmod})$ and  $H^\simp_*(\Delta(P),R)$ in the case that $P$ is not necessarily a face poset. 

%More generally, let $\mc{A}$ be a monoidal abelian category, and $P$ a thin poset with balanced coloring $c$. Consider the constant contravariant functor $1_\mc{A}:P\to\mc{A}$ sending each element of $P$ to the monoidal identity in $\mc{A}$ and each order relation to the identity map. This yields the complex $C_*(P,1_\mc{A},c)$ with homology $H_*(P,1_\mc{A})$, and in $K_0(\mc{A})$ we have $[H_*(P,1_\mc{A})]=\sum_{x\in P}(-1)^{\rk(x)}$.  
\label{constantfunctorex}
\end{ex}

\begin{ex} The Kauffman bracket of a link $L$ is a rank alternating sum over a Boolean lattice (the collection of subsets of the set of crossings $X$ of a diagram for $L$):
$$\langle L\rangle = \sum_{I\in 2^X}(-1)^{|I|}q^{|I|}(q+q^{-1})^{|D(I)|}$$
where $|D(I)|$ is the number of connected components in the Kauffman state corresponding to $I$. Khovanov homology is constructed by categorifying this formula as per the process outlined in Theorem \ref{categorify} and then applying some grading shifts to obtain a homology theory which decategorifies to the Jones polynomial (which is a rescaling of the Kauffman bracket). This example was the original motivation for the generalization \ref{categorify} given in this paper. Knot Floer homology, Ozsv\'{a}th and Szab\'{o}'s categorification of the Alexander polynomial \cite{ozsvath2004holomorphic} (done independently by Rasmussen in \cite{rasmussen2003floer}), can be expressed also as the homology coming from a certain functor on a Boolean lattice \cite{cubeknotfloer2009}, although the original construction is much more geometric. 
\end{ex}

\begin{ex}
Let $G=(V,E)$ be a graph. The chromatic polynomial $\chi_G(x)$ of $G$ counts the number of proper colorings $c:V\to [x]$ for $x\in \N$. Using an inclusion-exclusion argument, the chromatic polynomial can be expressed as
\[\chi_G(x)=\sum_{S\in 2^E}(-1)^{|S|}x^{k(S)}\]
where $k(S)$ denote the number of connected components in the spanning subgraph $S$. 
Helme-Guizon and Rong \cite{helme2005categorification} construct a functor on the Boolean lattice, producing a cohomology theory which categorifies $\chi_G(x)$.  

Similar in spirit is the chromatic symmetric function $X_G(\mathbf{x})$ defined by Stanley \cite{stanley1995symmetric} as a symmetric function generalization of the chromatic polynomial. The Stanley chromatic symmetric function also admits a state sum formula:
\[X_G(\mathbf{x})=\sum_{S\in 2^E}(-1)^{|S|}p_{\lambda(S)}\]
where $\lambda(S)$ is the integer partition encoded by the sizes of connected components in $S$, $p_{\lambda(S)}(\mathbf{x})=p_{\lambda_1}(\mathbf{x})\dots p_{\lambda_k}(\mathbf{x})$, and  $p_r(\mathbf{x})=\sum_{i=0}^\infty x_i^r$ is the power sum symmetric function.  
Sazdanovi\'{c} and Yip categorify $X_G(\mathbf{x})$ in \cite{sazdanovic2018categorification} again by constructing a functor on the Boolean lattice. 
\end{ex}

Now that we have revisited some familiar homology theories, restated in terms of Theorem \ref{categorify}, we now explore some new ideas.

%\begin{ex}
%The rank generating function of a poset defined by Stanley (see for example \cite{stanley1989someapp}) can be recovered as $\RGF(P,\Z[q],f)$ where $f(x)=(-q)^{\rk(x)}$.
%\end{ex}

\begin{ex}
    This example is inspired by the discussion in \cite{stanley1998enumerative} pages 275-276. 
    Let $L$ be a lattice and let $X$ be a subset of $L$ such that $\hat{1}\notin X$ and if $s\in L$ and $s\neq \hat{1}$ then $s\leq t$ for some $t\in X$. We can take $X=L-\{\hat{0},\hat{1}\}$ for example. By The Crosscut Theorem, \cite[Corollary 3.9.4]{stanley1998enumerative},
    $$\mu(\hat{0},\hat{1})=\sum_{k\in\Z}(-1)^kN_k$$ 
    where $N_k$ is the number of $k$-subsets of $X$ whose meet is $\hat{0}$ and $\mu$ is the M\"{o}bius function. Define $X(L)$ to be the set of all subsets of $X$ whose meet is not $\hat{0}$. Then $X(L)$ is a simplicial complex and the above equation can be rewritten as
    $$\mu(\hat{0},\hat{1})=\sum_{F\in X(L)}(-1)^{|F|}.$$
    Consider the (contravariant) constant functor $F_\Z:X(L)\to\Zmod$, sending each point to $\Z$ and each arrow to the identity map. By the discussion in Example \ref{constantfunctorex}, in the Grothendieck group we have $[H_*(X(L),\Zmod,F_\Z)]=\sum_{F\in X(L)}(-1)^{|F|}=\mu(\hat{0},\hat{1})$, and  \cite[Proposition 3.8.6]{stanley1998enumerative} states that $\mu(\hat{0},\hat{1})$ is the (reduced) Euler characteristic of the order complex $\Delta(L^\prime)$ where $L^\prime=L-\{\hat{0},\hat{1}\}$, so we also have $[H_*(\Delta(L^\prime),\Z)]=\mu(\hat{0},\hat{1})$ in the Grothendieck group, where $H_*(\Delta(L^\prime),\Z)$ denotes the simplicial homology of $\Delta(L^\prime)$. Since they both categorify $\mu(\hat{0},\hat{1})$, one may ask if these two homology theories are isomorphic. It turns out that $X(L)$ is in fact homotopy equivalent to $\Delta(L^\prime)$ (this is stated in \cite[page 276]{stanley1998enumerative}) so the answer is yes, both being isomorphic to the common simplicial homology of the two homotopy equivalent spaces $X(L)$ and $\Delta(L^\prime)$.
    \label{newex2}
\end{ex}

\begin{ex}
Any determinant can be written by definition as a rank alternating sum over the  the symmetric group $S_n$:
$$\det(x_{i,j})_{i,j=1}^n=\sum_{\pi\in S_n}(-1)^{\inv(\pi)}x_{1,\pi(1)}\dots x_{n,\pi(n)}.$$
Recall that $S_n$ has a partial order (the Bruhat order) $\Br(S_n)$ which is thin (in fact, it is a CW poset). 
In \cite{chandler2018vandermonde} the current author gives a categorification of the Vandermonde determinant as the homology of a functor $F:\Br(S_n)\to\mc{A}$ from the Bruhat order on $S_n$ to a symmetric monoidal abelian category $\mc{A}$. 
\label{newex3}
\end{ex}

\begin{ex}
Let $\Delta$ be a simplicial complex. In combinatorics, one often encounters the $f$-polynomial $f(q)=\sum_{i=0}^m f_{i-1}q^i$ and the $h$-polynomial $h(q)=\sum_{i=0}^mh_iq^i$ where $f_i$ is the number of faces of dimension $i$ (subsets in $\Delta$ with $i+1$ vertices), $m-1$ is the maximum dimension of any face in $\Delta$, and $h(q)=(1-q)^mf\left(\frac{q}{1-q}\right)$ (see for example \cite[Section 8.3]{ziegler2012lectures}). In $l^2$-topology, it is $h(-q)$ that is of interest (see for example  \cite[Section 1]{boros2010f}). We can express $h(-q)$ in a way suitable for categorification:
\begin{align*}
{h}(-q)&=(1+q)^mf\left(\frac{-q}{1+q}\right)\\
&=\sum_{i=0}^m f_{i-1}(-q)^i(1+q)^{m-i}\\
&=(1+q)^{(m-n)}\sum_{F\in\Delta} (-1)^{|F|}q^{|F|}(1+q)^{|V\setminus F|}.
\end{align*}
To categorify $h(-q)$, one can construct a functor $H:\Delta\to\Rgmod$ such that $H(F)=A^{\otimes |V\setminus F|}\{|F|\}$ for some fixed $A\in\Rgmod$ with $\qrk A=1+q$ where $\{|F|\}$ denotes a degree shift by $|F|$.
\label{newex4}
\end{ex}

\begin{ex}
 \label{constantfunctor}
 \label{newex1}
The characteristic polynomial $f_P(t)$ of a ranked poset $P$ is defined as
$$f_P(t)=\sum_{x\in P}\mu(x)t^{\rk(P)-\rk(x)}$$
where $\mu$ is the M\"{o}bius function on $P$ (see for example \cite{sagan1999characteristic} for details on characteristic polynomials and M\"{o}bius functions). For Eulerian posets, $\mu(x)=(-1)^{\rk(x)}$ and therefore if $P$ is Eulerian, 
$$f_P(t)=\sum_{x\in P}(-1)^{\rk(x)} t^{\rk(P)-\rk(x)}.$$
From Example \ref{constantfunctorex}, we see that $[H_*(P,1_\mc{A})]=\sum_{x\in P}(-1)^{\rk(x)}$ is exactly the characteristic polynomial for $P$ evaluated at 1 (for Eulerian posets). In a similar fashion, one might imagine categorifying the characteristic polynomial itself (for Eulerian posets) by choosing a functor that sends $x\mapsto T^{\otimes \rk(P)-\rk(x)}$ for some fixed $T\in\mc{A}$. If such a functor $F_T$ is constructed, then in $K_0(\mc{A})$, we have $[H_*(P,F_T)]=\sum_{x\in P}(-1)^{\rk(x)}[T]^{\rk(P)-\rk(x)}=f_P([T])$.

In fact, as pointed out in the introduction, any invariant of Eulerian posets can be expressed as a rank alternator via M\"{o}bius inversion, and thus can potentially be categorified by the construction presented in this paper. Stanley \cite{stanley1994eulerian} presents several interesting invariants of Eulerian posets including the flag h-vector, the cd-index, the local h-polynomial, and the ``Ehrhart analogue'' of the local $h$-vector of a triangulation of a simplex. Categorifications of such invariants may prove interesting and will be the subject of a future paper by the author.
\end{ex}
 
\section*{Acknowledgments}
The author would like to thank Radmila Sazdanovic, Tye Lidman, Nathan Reading, Anton Mellit, Ben Hollering, Stephen Lacina, and Mike Breen-McKay for many helpful suggestions and conversations. The author's work was supported partially by the project 
P31705 of the Austrian Science Fund.

\bibliography{bib} 

\end{document}